\newcommand{\al}{\alpha}
\newcommand{\be}{\beta}
\newcommand{\la}{\lambda}
\newcommand{\de}{\delta}
\newcommand{\eps}{\varepsilon}
\newcommand{\bx}{\bar x}
\newcommand{\iv}{^{-1} }
\newcommand {\R} {\mathbb R}
\newcommand {\N} {\mathbb N}
\newcommand {\B} {\mathbb B}
\newcommand {\gph} {{\rm gph}\,}%Graph
\newcommand {\dom} {{\rm dom}\,}
\newcommand {\bd} {{\rm bd}\,}
\newcommand {\Int} {{\rm int}\,}
\renewcommand{\iff}{$ \Leftrightarrow\ $}%iff
\newcommand{\folgt}{$ \Rightarrow\ $}
\def\nbh{neighbourhood}
\def\es{\emptyset}
\def\lsc{lower semicontinuous}
\def\RHS{right-hand side}
\def\SVM{set-valued mapping}
\def\EVP{Ekeland variational principle}
\newcommand{\norm}[1]{\left\Vert#1\right\Vert}
\newcommand{\qdtx}[1]{\quad\mbox{#1}\quad}
\newcounter{mycount}
\title{
About Extensions of the Extremal Principle
\thanks{The research was partially supported by the Australian Research Council, project DP160100854.
Hoa T. Bui is supported by an Australian Government Research Training Program (RTP) Stipend and RTP Fee-Offset Scholarship through Federation University Australia.}
}
\author{
Hoa T. Bui
\and
Alexander Y. Kruger
}
\institute{
Hoa T. Bui
\and
Alexander Y. Kruger (\Letter\,)
\at
Centre for Informatics and Applied Optimization,
Faculty of Science and Technology,
Federation University Australia, POB 663, Ballarat, Vic, 3350, Australia\\
\email{h.bui@federation.edu.au, a.kruger@federation.edu.au}
}
\date{Received: date / Accepted: date}
\journalname{Vietnam Journal of Mathematics}
\begin{document}

\maketitle

\begin{abstract}
In this paper, after recalling and discussing the conventional extremality, local extremality, stationarity and approximate stationarity properties of collections of sets and the corresponding (extended) extremal principle, we focus on extensions of these properties and the corresponding dual conditions with the goal to refine the main arguments used in this type of results, clarify the relationships between different extensions and expand the applicability of the generalized separation results.
We introduce and study new more universal concepts of \emph{relative extremality} and \emph{stationarity} and formulate the \emph{relative extended extremal principle}.
Among other things, certain stability of the relative approximate stationarity is proved.
Some links are established between the relative extremality and stationarity properties of collections of sets and (the absence of) certain regularity, lower semicontinuity and Lipschitz-like properties of set-valued mappings.
\end{abstract}

\keywords{Extremality \and Stationarity \and Transversality \and Regularity \and Separability \and Extremal principle \and \EVP}

\subclass{Primary 49J52 \and 49J53 \and Secondary 49K40 \and 90C30 \and 90C46}

%\setcounter{tocdepth}{2}
%\tableofcontents

%\AK{23/07/17.
%Table of contents to be removed}
\section{Introduction}\label{intro}
%\AK{23/07/17.
%showlabels to be removed}
%%%%%%%%%%%%%%%%%%%%%%%%%%%%%%%%%%%%%%%%%%%%%%%%%%%%%%%%%%%%%%%%%%
Starting with the pioneering work by Dubovitskii and Milyutin \cite{DubMil65_}, it has become natural, when dealing with optimization and other related problems, to reformulate optimality and other properties under investigation as a kind of extremal behaviour of certain collections of sets.
The concept of a finite \emph{extremal collection of sets} (Definition~\ref{D1}) represents a very general model embracing many optimality notions.
It was first introduced in \cite{KruMor80.2}
and studied in \cite{Kru81.2,KruMor80.2,KruMor80,Kru85.1,Kru03,MorSha96, Iof98,BorJof98,Mor06.1} and many other publications.
The \emph{extremal principle} (Theorem~\ref{EP}) providing a dual characterization of (local) extremality in the form of \emph{generalized separation} dates back to the 1980 paper by Kruger and Mordukhovich \cite{KruMor80.2}.
It can be considered as a generalization of the convex \emph{separation theorem} to nonconvex sets and serves as a powerful tool for deducing optimality conditions in nonconvex optimization and subdifferential and coderivative calculus rules; cf. \cite{Kru85.1,Kru03,Mor06.1}.

Similar to the classical analysis, besides extremality, the concepts of \emph{stationarity} and \emph{regularity} of collections of sets play an important role in this type of analysis and have been intensively investigated in recent years.
It was established in \cite{Kru04,Kru03} that the conclusion of the extremal principle (the generalized separation) actually characterizes a much weaker than local extremality property of \emph{approximate stationarity} (Definition~\ref{D2}).
It became possible, keeping basically the original proof, to formulate (in the Asplund space setting) the \emph{extended extremal principle}: the generalized separation is equivalent to the approximate stationarity (Theorem~\ref{EEP}).
The negation of the approximate stationarity happens to be an important regularity/transversality property of collections of sets known under various names \cite{Kru05,Kru06,Kru09,KruTha13,KruLukTha2} (A table illustrating the evolution of the terminology can be found in \cite[Section~2]{KruLukTha2}.) and closely connected with the fundamental property of \emph{metric regularity} of \SVM s.

Motivated by applications, there have been two independent attempts recently to single out the core part of the conventional proof of the extremal principle and formulate it as a separate statement with the objective to produce a more universal tool, applicable in situations where the conventional (extended) extremal principle fails: \cite[Theorem~3.1]{KruLop12.1} by Kruger and L\'opez and \cite[Lemmas~2.1 and 2.2]{ZheNg06} by Zheng and Ng.
The first result served as a tool when extending the extremal principle to infinite collections of sets, while the lemmas from \cite{ZheNg06} were used when proving fuzzy multiplier rules in set-valued optimization problems.
These lemmas have been further refined and strengthened in Zheng and Ng \cite[Theorems~3.1 and 3.4]{ZheNg11} and Zheng et al. \cite[Theorem~1.1]{ZheYanZou17}.
The mentioned statements in \cite{ZheNg06,ZheNg11,ZheYanZou17}, in particular, waive the traditional for the extremal principle and its extensions in \cite{Kru04,Kru03, KruLop12.1} assumption that the sets have nonempty intersection.
Moreover, it has been observed in \cite{ZheNg11} that the conventional generalized separation condition can be strengthened by adding an additional condition (see condition \eqref{ZN-3}) determining the `directions' of the dual space vectors.
This additional condition is important, for instance, when recapturing the classical separation theorems.

In the current paper we study arbitrary (not necessarily smooth or convex) sets in a normed linear space.
After recalling and discussing the conventional extremality, local extremality, stationarity and approximate stationarity properties of collections of sets and the corresponding (extended) extremal principle, we focus on extensions of these properties and the corresponding dual conditions.
The existing and some new extensions are considered with the goal to refine the main arguments used in this type of results, clarify the relationships between different extensions and expand the applicability of the generalized separation results.

We compare in detail the assumptions and conclusions in \cite[Theorem~3.1]{KruLop12.1} and \cite[Theorem~3.4]{ZheNg11} and show (Proposition~\ref{P3.4}) that an appropriate reformulation of the latter theorem is a consequence of the first one.
We also show (Corollary~\ref{C2.2}) that the main assertions in \cite[Theorem~1.1]{ZheYanZou17} are consequences of the conventional extremal principle.
At the same time, we demonstrate (Corollary~\ref{NLEP}) that \cite[Theorem~3.4]{ZheNg11} is strong enough to recapture the nonlocal extremal principle, although it does not seem to be able to recapture the full local extremal principle.
We briefly discuss (Remark~\ref{R4.2}.3) the role of the additional condition \eqref{ZN-3} and observe that it comes from subdifferentiating a norm at a nonzero point, and analogues of this condition are implicitly present in    the proofs of the conventional extremal principle and its extensions.
We admit the importance of conditions of the \eqref{ZN-3} type in generalized separation statements, but
in the current paper, keeping in line with the conventional formulations and for the sake of simplicity of the presentation, we avoid adding such conditions to the statements.

Unlike the conventional extremal principle and its extensions in \cite{Kru04,Kru03,KruLop12.1} assuming that the sets have a common point, in \cite{ZheNg06,ZheNg11,ZheYanZou17} the intersection of the sets is assumed to be empty, and each set is considered near its own point.
This seems to be an important advancement, which in fact exploits the original ideas behind the conventional extremal principle.
We demonstrate that the case of sets with empty intersection can still be treated within the conventional framework.
This new point of view on the extremal principle is made explicit and further developed in the current paper introducing and studying the new more universal concepts of \emph{relative extremality} and \emph{stationarity}.
We formulate the \emph{relative extended extremal principle} (Theorem~\ref{T4.3}) and a `relative' version of \cite[Theorem~3.1]{KruLop12.1} (Theorem~\ref{T4.2}).
Among other things, certain stability of the relative approximate stationarity is proved (Proposition~\ref{P4.3}).
Some links are established between the relative extremality and stationarity properties of collections of sets and (the absence of) certain regularity, lower semicontinuity and Lipschitz-like properties of set-valued mappings (Proposition~\ref{P4.7} and Remark~\ref{R4.6}).
As a consequence, we demonstrate a connection between the extremality and stationarity properties of collections of sets and the \emph{nonconvex separation property} by Borwein and Jofre \cite{BorJof98} (Proposition~\ref{P4.8}).

The structure of the paper is as follows.
The next Section~\ref{pre} contains some preliminary facts used throughout the paper.
In Section~\ref{S3} we recall and discuss the conventional definitions of extremality, local extremality, stationarity and approximate stationarity of pairs of sets, the conventional extremal principle and its extensions.
The section contains some comparisons, illustrative examples and detailed historical comments.
Section~\ref{S4} is devoted to further extensions of the extremal principle.
It contains a comparison of the assumptions and conclusions in \cite[Theorem~3.1]{KruLop12.1} and \cite[Theorem~3.4]{ZheNg11},
a study of the new concepts of extremality and stationarity relative to given points in each of the sets, the \emph{relative extended extremal principle}, a `relative' version of \cite[Theorem~3.1]{KruLop12.1}, and a discussion of the links between the relative extremality and stationarity properties of collections of sets and (the absence of) certain regularity, lower semicontinuity and Lipschitz-like properties of set-valued mappings.

For simplicity, throughout the paper, we stick to the case of two nonempty sets, the general case of $n$ ($n>1$) sets not being strongly different.
When formulating dual conditions (the extremal principle and its extensions), again for simplicity, only the Asplund space setting is considered.
Recall that a Banach space is \emph{Asplund} if every continuous convex function defined on an open convex set $D$ is Fr\'echet differentiable at each point of some dense subset of $D$ \cite{Phe93}, or equivalently, if the dual of each its separable subspace is separable.
We refer the reader to \cite{Phe93,Mor06.1,BorZhu05} for discussions about and characterizations of Asplund spaces.
All reflexive, in particular, all finite dimensional Banach spaces are Asplund.
By now it is well understood that extensions of the main results to broader classes of (or general) Banach spaces only require substituting in the proofs the Fr\'echet subdifferential sum rule with a sum rule for appropriate %(\emph{trustworthy})
subdifferentials valid in such spaces.
For instance, in general Banach spaces one can use Clarke subdifferentials or the classical convex subdifferentials if the sets are convex.
One can also define certain abstract subdifferentials formulating the needed properties as axioms; see e.g. \cite{KruLop12.1}.
These are purely straightforward technical tricks which do not involve essentially new ideas.

%%%%%%%%%%%%%%%%%%%%%%%%%%%%%%%%%%%%%%%%%%%%%%%%%%%%%%%%%%%%%%%%%%
\section{Preliminaries}\label{pre}

%\subsection{Notation}

Our basic notation is standard, see e.g. \cite{Mor06.1,RocWet98,DonRoc14}.
Throughout the paper, $X$ is a normed linear space.
Its topological dual is denoted by $X^*$ while $\langle\cdot,\cdot\rangle$ denotes the bilinear form defining the pairing between the two spaces.
The closed unit balls in $X$ and $X^*$ are denoted by $\B$ and $\B^*$, respectively.
$\B_\de(x)$ denotes the open ball with radius $\de>0$ and center $x$.
%We are going to use the convention that $\B_0(x)=\{x\}$.
We use the same symbol $\|\cdot\|$ to denote norms in all normed linear spaces (primal and dual).
If not explicitly stated otherwise, products of normed linear spaces are assumed to be equipped with the maximum norm: $\|(x,y)\|=\max\{\|x\|,\|y\|\}$, $(x,y)\in X\times Y$.
For brevity, we sometimes write $\|x,y\|$ and ${\B}_\de(x,y)$ instead of $\|(x,y)\|$ and ${\B}_\de((x,y))$, respectively.
Given a nonempty subset $A$ of a normed linear space, $\Int A$ and $\bd A$ stand, respectively, for its interior and boundary;
$d(x,A):=\inf_{a\in A}\|x-a\|$ is
the distance from a point $x$ to $A$.
We use the notation $\{A,B\}$ when referring to the pair of sets $A$ and $B$ as a single object.
%We say that a subset $A\subset{X}$ is locally closed near $\bar{x}\in A$ if $A\cap{U}$ is closed in $X$ for some closed \nbh\ $U$ of $\bar{x}$.
$\mathbb{N}$ stands for the set of all positive integers.

A set-valued mapping $F:X\rightrightarrows Y$ between two sets $X$ and $Y$ is a mapping, which assigns to every $x\in X$ a subset (possibly empty) $F(x)$ of $Y$.
We use the notations
$\gph F:=\{(x,y)\in X\times Y\mid
y\in F(x)\}$ and $\dom F:=\{x\in X\mid F(x)\ne\es\}$
for the graph and the domain of $F$, respectively, and $F\iv : Y\rightrightarrows X$ for the inverse of $F$.
This inverse (which always exists with possibly empty values at some $y$) is defined by
$F\iv(y) :=\{x\in X|\, y\in F(x)\}$, $y\in Y$.
%and satisfies
%$$(x,y)\in\gph F \quad\Longleftrightarrow\quad (y,x)\in\gph F\iv.$$
Obviously, $\dom F\iv=F(X)$.

%\subsection{Fr\'echet normals and subdifferentials}

Dual characterizations of extremality/stationarity (generalized separation) are formulated in this paper in terms of dual tools -- Fr\'echet normal cones.
Recall \cite{Kru03} that, given a subset $A$ of a normed linear space $X$ and a point $a\in A$, the \emph{Fr\'echet normal cone} to $A$ at $a$ is defined as follows:
\begin{gather}\label{NC}
N_{A}(a):= \left\{x^\ast\in X^\ast\mid
\limsup_{x\to a,\,x\in A\setminus\{a\}} \frac {\langle x^\ast,x-a\rangle}
{\|x-a\|} \le 0 \right\}.%\label{Fr}
\end{gather}
It is a nonempty
%weak$^*$
closed convex cone, often trivial
($N_{A}(\bx)=\{0\}$).
If $A$ is a convex set, then \eqref{NC} reduces to the normal cone in the sense of convex analysis:
\begin{gather*}\label{CNC}
N_{A}(a):= \left\{x^*\in X^*\mid \langle x^*,x-a \rangle \leq 0 \qdtx{for all} x\in A\right\}.
\end{gather*}
Similarly, given a function $f:X\to\R_\infty:=\R\cup\{+\infty\}$ and a point $a\in A$ with $f(a)<\infty$, the
%weak$^*$
closed convex set
\begin{gather*}
\partial{f}(a) = \left\{x^\ast\in X^\ast\mid
\liminf\limits_{x\to a,\;x\ne a}
\frac{f(x)-f(a)-\langle{x}^\ast,x-a\rangle}
{\norm{x-a}} \ge 0 \right\}%\label{Frsd}
\end{gather*}
is the \emph{Fr\'echet subdifferential} of $f$ at $a$.
It reduces to the classical Moreau--Rockafellar  subdifferential when $f$ is convex.
The following $\eps$-extension ($\eps\ge0$) of \eqref{NC} is used in the sequel:
the set of \emph{$\eps$-normal elements} to $A$ at $a\in A$:
\begin{gather}\label{NC1}
N_\eps(a\mid A):= \left\{x^*\in X^*\mid \limsup_{x\to a,\,x\in A\setminus\{a\}} \frac {\langle x^*,x-a\rangle}{\|x-a\|} \leq\eps\right\}.
\end{gather}
When $\eps=0$, it reduces to \eqref{NC}.
It is easy to check that $N_\eps(a\mid A)\supset N_A(a)+\eps\B$ for any $\eps\ge0$, and if $A$ is not convex, the inclusion can be strict (see \cite{Kru81.1}).

%\subsection{Lemma}
The following simple lemma used several times throughout the paper provides connections between two common ways of formulating `generalized separation' in terms of normal cones.
It is present implicitly in several existing proofs of dual conditions in the literature.

\begin{lemma}\label{L1}
Let $K_1$ and $K_2$ be nonempty cones in a normed linear space and $\eps\in(0,1)$.
\begin{enumerate}
\item
Suppose vectors $z_1$ and $z_2$ satisfy the conditions:
\begin{gather*}
\norm{z_1}+\norm{z_2}=1,\quad
\norm{z_1+z_2}<\eps,\quad
z_1\in K_1,\quad
z_2\in K_2.
\end{gather*}
Then there exist vectors $\hat z_1$ and $\hat z_2$ satisfying the following conditions:
\begin{gather*}
\norm{\hat z_1}+\norm{\hat z_2}=1,\quad
\hat z_1+\hat z_2=0,\quad
d(\hat z_1,K_1)<\frac{\eps}{2(1-\eps)},\quad
d(\hat z_2,K_2)<\frac{\eps}{2(1-\eps)}.
\end{gather*}
\item
Suppose vectors $z_1$ and $z_2$ satisfy the conditions:
\begin{gather*}
\norm{z_1}+\norm{z_2}=1,\quad
z_1+z_2=0,\quad
d(z_1,K_1)+d(z_2,K_2)<\eps.
\end{gather*}
Then there exist vectors $\hat z_1$ and $\hat z_2$ satisfying the following conditions:
\begin{gather}\label{L1-1}
\norm{\hat z_1}+ \norm{\hat z_2}=1,\quad
\norm{\hat z_1+\hat z_2}<\frac{\eps}{1-\eps},\quad
\hat z_1\in K_1,\quad
\hat z_2\in K_2.
\end{gather}
\end{enumerate}
\end{lemma}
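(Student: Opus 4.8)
The plan is to handle both parts with the same two-step recipe: a small \emph{translation} of the two vectors to enforce the required linear relation between them, followed by a \emph{rescaling} to recover the normalization $\norm{\cdot}+\norm{\cdot}=1$. The rescaling step is harmless precisely because $K_1$ and $K_2$ are cones, so that $d(\lambda x,K_i)=\lambda\, d(x,K_i)$ for every $\lambda>0$.

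For part (i), set $w:=z_1+z_2$, so $\norm{w}<\eps$, and define $\hat z_1':=z_1-\tfrac12 w$ and $\hat z_2':=z_2-\tfrac12 w$. Then $\hat z_1'+\hat z_2'=0$ by construction, while $d(\hat z_i',K_i)\le\norm{\hat z_i'-z_i}=\tfrac12\norm{w}<\tfrac{\eps}{2}$ because $z_i\in K_i$. By the triangle inequality $t:=\norm{\hat z_1'}+\norm{\hat z_2'}\ge\norm{z_1}+\norm{z_2}-\norm{w}=1-\norm{w}>1-\eps>0$. Putting $\hat z_i:=\hat z_i'/t$, the normalization $\norm{\hat z_1}+\norm{\hat z_2}=1$ and the identity $\hat z_1+\hat z_2=0$ are immediate, and the cone property gives $d(\hat z_i,K_i)=\tfrac1t\,d(\hat z_i',K_i)<\tfrac{1}{t}\cdot\tfrac{\eps}{2}<\tfrac{\eps}{2(1-\eps)}$, which is the asserted conclusion.

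For part (ii), I would first replace $z_1,z_2$ by nearby elements of the cones: with $\eta:=\eps-(d(z_1,K_1)+d(z_2,K_2))>0$ (finite, hence $K_1,K_2\ne\es$, by the hypothesis), pick $k_i\in K_i$ with $\norm{z_i-k_i}<d(z_i,K_i)+\tfrac{\eta}{2}$, so that $\norm{z_1-k_1}+\norm{z_2-k_2}<\eps$. Since $z_1+z_2=0$, this yields $\norm{k_1+k_2}\le\norm{z_1-k_1}+\norm{z_2-k_2}<\eps$, and $t:=\norm{k_1}+\norm{k_2}\ge1-(\norm{z_1-k_1}+\norm{z_2-k_2})>1-\eps>0$. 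Rescaling $\hat z_i:=k_i/t\in K_i$ then gives $\norm{\hat z_1}+\norm{\hat z_2}=1$ and $\norm{\hat z_1+\hat z_2}=\tfrac1t\norm{k_1+k_2}<\tfrac{\eps}{1-\eps}$, which is \eqref{L1-1}. (Note that in this part $z_2=-z_1$ forces $\norm{z_1}=\norm{z_2}=\tfrac12$, so no vector is zero.)

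The only point requiring genuine care is the arithmetic of the constants. One must split the translation in part (i) symmetrically — by $\tfrac12 w$ into \emph{each} vector, so that the two perturbations \emph{together} have norm $\norm{w}$ rather than $2\norm{w}$ — and one must verify that the rescaling denominator $t$ is bounded below by $1-\eps$ (and not merely $1-2\eps$). These two observations are exactly what produce the factors $\tfrac{\eps}{2(1-\eps)}$ in (i) and $\tfrac{\eps}{1-\eps}$ in (ii); the remainder is a routine application of the triangle inequality together with the scale invariance of $x\mapsto d(x,K_i)$ under the cones.
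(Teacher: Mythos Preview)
Your proof is correct and essentially identical to the paper's: both parts use the same translate-then-rescale construction (in part (i), your $\hat z_i'=z_i-\tfrac12 w$ is exactly the paper's $z_i'=(z_i-z_{3-i})/2$), with the same triangle-inequality bound $t>1-\eps$ on the rescaling denominator. The only cosmetic difference is that in part (i) you invoke the scale invariance $d(\lambda x,K_i)=\lambda\,d(x,K_i)$, whereas the paper bounds $d(\hat z_i,K_i)$ directly by the distance to the specific cone point $z_i/t$; both yield the same estimate.
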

\begin{proof}
(i) Set
\begin{gather*}
z_1':=\frac{z_1-z_2}{2},\quad z_2':=\frac{z_2-z_1}{2}.
\end{gather*}
We have
\begin{gather}\notag
z_1'+z_2'=0,\quad \norm{z_1'-z_1}=\norm{z_2'-z_2} =\frac{1}{2}\norm{z_1+z_2}<\frac{\eps}{2},
\\\label{L1P1}
\norm{z_1'}+\norm{z_2'}\ge\norm{z_1}+\norm{z_2} -\norm{z_1'-z_1}-\norm{z_2'-z_2}>1-\eps.
\end{gather}
Now set
\begin{gather}\label{L1P2}
\hat z_1:=\frac{z_1'}{\norm{z_1'}+\norm{z_2'}},\quad
\hat z_2:=\frac{z_2'}{\norm{z_1'}+\norm{z_2'}}.
\end{gather}
Then $\hat z_1+\hat z_2=0$, $\norm{\hat z_1}+\norm{\hat z_2}=1$ and, for $i=1,2$,
\begin{gather*}
d(\hat z_i,K_i)\le\norm{\hat z_i-\frac{z_i} {\norm{z_1'}+\norm{z_2'}}} =\frac{\norm{z_i'-z_i}}{\norm{z_1'}+\norm{z_2'}} <\frac{\eps}{2(1-\eps)}.
\end{gather*}

(ii) There exist vectors $z_1'\in K_1$ and $z_2'\in K_2$ such that
$\norm{z_1-z_1'}+\norm{z_2-z_2'}<\eps$.
Then condition \eqref{L1P1} is satisfied,
\begin{gather*}
\norm{z_1'+z_2'}\le\norm{z_1+z_2} +\norm{z_1-z_1'}+\norm{z_2-z_2'}<\eps,
\end{gather*}
and the vectors $\hat z_1$ and $\hat z_2$ defined by \eqref{L1P2} satisfy all the conditions in \eqref{L1-1}.
\qed\end{proof}

\section{Extremality, stationarity and extremal principle}\label{S3}

In this section we recall and discuss the conventional definitions of extremality, local extremality, stationarity and approximate stationarity of pairs of sets, the conventional and extended extremal principles.
\subsection{Extremality}

\begin{definition}[Extremality] \label{D1}
Suppose $X$ is a normed linear space, $A,B\subset X$ and $A\cap B\ne\es$.
\begin{enumerate}
\item
The pair $\{A,B\}$ is
\emph{extremal} if
for any $\varepsilon>0$ there exist $u,v\in{X}$
such that
\begin{gather}\label{D1-1}
(A-u)\cap(B-v)=\emptyset
\qdtx{and}
\max\{\|u\|,\|v\|\}<\eps;
\end{gather}
\item
The pair $\{A,B\}$ is
\emph{locally extremal} at $\bx\in A\cap B$ if
there exists a $\rho>0$ such that for any $\varepsilon>0$ there
are $u,v\in X$ such that
\begin{gather}\label{D1-2}
(A-u)\cap (B-v)\cap{\B}_\rho(\bar{x})
=\emptyset
\qdtx{and}
\max\{\|u\|,\|v\|\}<\eps.
\end{gather}
\end{enumerate}
\end{definition}

Condition (i) (condition (ii)) in Definition~\ref{D1} means that an appropriate arbitrarily small shift of the sets makes them nonintersecting (in a neighbourhood of $\bx$).
This is a very general model embracing many optimality notions.
It is easy to see that, if a pair $\{A,B\}$ is extremal, it is locally extremal at any point in $A\cap B$, and the converse is true if $A$ and $B$ are convex.
At the same time, the (nonlocal) extremality in condition (i) can be considered as a special case of the local extremality in condition (ii) with $\rho=\infty$.

The next example illustrates the difference between the extremality and the local extremality.

\begin{example}\label{E3.1}
1. The sets $A:=\{(x_1,x_2)\mid x_2\le0\}$ and $B:=\{(x_1,x_2)\mid x_1^2\le x_2\}$ in $\R^2$ (see Fig.~\ref{F1}) are obviously extremal.

2. If the set $A$ above is modified slightly: $A:=\{(x_1,x_2)\mid x_2\le0\;\;\mbox{or}\;\; x_1\le-1\}$ (see Fig.~\ref{F2}), then $\{A,B\}$ is not extremal any more.
At the same time, it is still locally extremal at $(0,0)\in A\cap B$ (but not at $(-1,1)$!).
\end{example}

\begin{figure}[!ht]
\centering
\begin{minipage}[b]{0.3\textwidth}
\centering
\includegraphics[height=2.5cm]{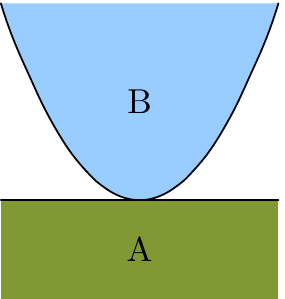}
\caption{Example~\ref{E3.1}.1}\label{F1}
\end{minipage}
\begin{minipage}[b]{0.3\textwidth}
\centering
\includegraphics[height=2.5cm]{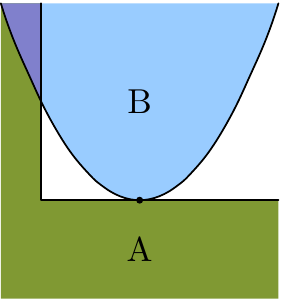}
\caption{Example~\ref{E3.1}.2}\label{F2}
\end{minipage}
\end{figure}

As the next proposition shows, when the sets are closed, the first condition in \eqref{D1-1} can be reformulated equivalently in terms of distances.

\begin{proposition}[Distance characterizations of extremality]
\label{P3.1}
Suppose $X$ is a normed linear space, $A,B\subset X$ are closed and $A\cap B\ne\es$.
The pair $\{A,B\}$ is extremal if and only if for any $\eps>0$ there exist $u,v\in{X}$
such that $\max\{\|u\|,\|v\|\}<\eps$ and the following two equivalent conditions hold:
\begin{enumerate}
\item
$d(a-u,B-v)>0$ for all $a\in A$;
\item
$d(b-v,A-u)>0$ for all $b\in B$.
\end{enumerate}
\end{proposition}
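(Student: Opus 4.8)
The plan is to reduce the statement to the elementary fact that for a \emph{closed} subset $C$ of a normed linear space and a point $x$, one has $d(x,C)>0$ if and only if $x\notin C$ (since $d(x,C)=0$ precisely when $x\in\cl C=C$). Everything else is bookkeeping with translations.

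First I would fix $u,v\in X$ and note that, as $A$ and $B$ are closed, the translates $A-u$ and $B-v$ are closed as well. I would then rewrite the first condition in \eqref{D1-1}, namely $(A-u)\cap(B-v)=\es$, in two symmetric ways. On the one hand, $(A-u)\cap(B-v)=\es$ says exactly that $a-u\notin B-v$ for every $a\in A$; applying the elementary fact above to the closed set $B-v$, this is equivalent to $d(a-u,B-v)>0$ for all $a\in A$, i.e. to condition~(i). On the other hand, the same equality says that $b-v\notin A-u$ for every $b\in B$, and applying the elementary fact to the closed set $A-u$ gives condition~(ii). Hence conditions (i) and (ii) are each equivalent to $(A-u)\cap(B-v)=\es$, and in particular to each other.

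Finally I would substitute this equivalence into Definition~\ref{D1}(i): the pair $\{A,B\}$ is extremal if and only if for every $\eps>0$ there exist $u,v\in X$ with $\max\{\|u\|,\|v\|\}<\eps$ and $(A-u)\cap(B-v)=\es$, which by the previous step is the same as requiring $\max\{\|u\|,\|v\|\}<\eps$ together with condition~(i), or equivalently with condition~(ii). This yields the asserted characterization. There is essentially no hard step here; the only points requiring care are using the closedness hypothesis in the right place — it is exactly what turns ``$x\notin C$'' into ``$d(x,C)>0$'' — and observing that translation by $u$ (resp. $v$) preserves closedness. Note that $A\cap B\ne\es$ is irrelevant to the equivalence of (i), (ii) and the emptiness of $(A-u)\cap(B-v)$; it is only part of the standing setting in which Definition~\ref{D1} is formulated.
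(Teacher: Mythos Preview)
Your proof is correct and follows essentially the same approach as the paper: both reduce the statement to showing that, for fixed $u,v$, each of conditions (i) and (ii) is equivalent to $(A-u)\cap(B-v)=\emptyset$, using the closedness of $B-v$ (resp.\ $A-u$) to pass from ``$x\notin C$'' to ``$d(x,C)>0$''. Your write-up is slightly more explicit about the elementary distance fact and the preservation of closedness under translation, but the argument is the same.
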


\begin{proof}
It is sufficient to show that each of the conditions (i) or (ii) is equivalent to $(A-u)\cap(B-v)=\emptyset$.
Each of these conditions obviously implies $(A-u)\cap(B-v)=\emptyset$.
Conversely, if $(A-u)\cap(B-v)=\emptyset$, then $a-u\notin B-v$ for any $a\in A$, and consequently, since $B$ is closed, $d(a-u,B-v)>0$, i.e. condition (i) is satisfied.
Similarly, since $A$ is closed, condition $(A-u)\cap(B-v)=\emptyset$ implies $d(b-v,A-u)>0$ for all $b\in B$, hence, condition (ii).
\qed\end{proof}

The closedness assumption in Proposition~\ref{P3.1} cannot be dropped.

\begin{example}
The pair of sets $A:=\R^2\setminus\{(t,0)\mid t>0\}$ and $B:=\{(0,0)\}$ in $\R^2$ is obviously extremal in the sense of Definition~\ref{D1}(i).
At the same time, $d(a-u,B-v)=d(b-v,A-u)=0$ for all $a\in A$, $b\in B$ and $u,v\in\R^2$.
\end{example}

\begin{remark}%[Nonlocal extremality]\label{R2.4}
Condition $(A-u)\cap(B-v)=\emptyset$ which is crucial for the extremality property in Definition~\ref{D1} is obviously implied by the stronger condition $d(A-u,B-v)>0$, which is also stronger than each of the conditions (i) or (ii) in Proposition~\ref{P3.1}.
As the next example shows, condition
$(A-u)\cap(B-v)=\emptyset$ in the definition of extremality cannot be replaced by condition $d(A-u,B-v)>0$ even when both $A$ and $B$ are closed.
\end{remark}

\begin{example}
Consider two sets in $l^\infty$:
\begin{align*}
A:=&\Big\{x=(x^k)\mid x^K\in [K,K+1]\cup \left[K+1+\frac{1}{K},K+2\right]\;\;\mbox{for some}\;\;K\in\N;
\\
&\hspace{1.7cm} x^k\in [-1,1]\;\;\mbox{for all}\;\; k\neq K\Big\},
\\
B:=&\left\{x=(x^k)\mid x^K=K+1\;\;\mbox{for some}\;\;K\in\N;\; x^k=0\;\;\mbox{for all}\;\; k\neq K\right\}.
\end{align*}
Observe that $A\cap B=B\ne\es.$
We are going to show that $A$ and $B$ are closed, $\{A,B\}$ is extremal, and $d(A-u,B-v)=0$ for all $u,v\in l^\infty$ such that $\max\{\|u\|,\|v\|\}<1/2$.

We first show that $A$ is closed.
Let $(x_n)\subset A$ and $x_n\to x_0\in l^\infty$.
There exist numbers $K,N\in\N$ such that $x_{n}^{K}\in  [K,K+1]\cup [K+1+1/K,K+2]$ for all $n>N.$
Indeed, assume on the contrary that for any $N>0$ there exist $m,n>N$, $m\ne n$ and $K_m,K_n\in\N$, $K_n\neq K_m$ such that $x_{m}^{K_m}\in  [K_m,K_m+1]\cup [K_m+1+1/K_m,K_m+2]$ and $x_{n}^{K_n}\in  [K_n,K_n+1]\cup [K_n+1+1/K_n,K_n+2]$. Then
\begin{align*}
\|x_n-x_m\|&\ge \max\left\{|x_{n}^{K_n}-x_{m}^{K_n}|, |x_{n}^{K_m}-x_{m}^{K_m}|\right\}
\\&
\ge \max\left\{|x_{n}^{K_n}|-|x_{m}^{K_n}|, |x_{m}^{K_m}|-|x_{n}^{K_m}|\right\}\ge \max\{K_n,K_m\}-1\ge1,
\end{align*}
which contradicts the assumption that $(x_n)$ is convergent.
Hence, $x_0=(x_{0}^{1},x_{0}^{2},\ldots)$ with $x_{0}^{K}\in [K,K+1]\cup [K+1+1/K,K+2]$ and $x_{0}^{k} \in [-1,1]$ for all $k\neq K$. Thus, $A$ is closed.

A similar argument can be used to show that $B$ is closed.
Observe that for all $x_1,x_2\in B$ with $x_1\neq x_2$ one has $\|x_1-x_2\| = \max\{K_1,K_2\}+1$ for some $K_1,K_2\in\N$, $K_1\ne K_2$.
Hence, $\|x_1-x_2\|>1$.
It follows that any convergent sequence $(x_n)\subset B$ must be stationary when $n$ is sufficiently large.
This immediately yields the closedness of $B$.

Now we show that $\{A,B\}$ is extremal.
Given an $\varepsilon\in(0,1),$ find and an $n\in\N$ such that $1/n<\varepsilon\le1/(n-1)$ and define a $u\in l^\infty$ as follows: $u^i = 1/n$ if $i<n$, and $u^i=1/(i+1)$ if $i\ge n$.
We have $\|u\|\le\frac{1}{n}<\varepsilon$.
Let $b=(b^k)\in B$, i.e. there exists a $K\in\N$ such that $b^K= K+1$ and $b^k=0$ for all $k\ne K$.
Then $0<(b+u)^k\le1/n<1$ for all $k\ne K$.
If $K<n$, then $(b+u)^K=K+1+1/n$.
If $K\ge n$, then $(b+u)^K=K+1+1/(K+1)$.
In both cases, $K+1<(b+u)^K<K+1+1/K$.
Hence, $b+u\notin A$, and consequently, $(A-u)\cap B=\es$.

Let $u=(u^k),v=(v^k)\in l^\infty$ be such that $\max\{\|u\|,\|v\|\}<1/2$ and $(A-u)\cap(B-v)=\emptyset$.
We are going to show that $d(A-u,B-v)=0.$
Obviously $\|u-v\|<1$.
Moreover, $|u^k-v^k|<1/k$ for all $k\in\N$.
Indeed, suppose on the contrary that $1/K\le|u^K-v^K|<1$ for some $K\in\N$ and choose a $b=(b^k)\in B$ such that $b^K=K+1$ and $b^k=0$ for $k\ne K$.
Then for any $k\ne K$, we have $\|(b+u-v)^k\|=\|(u-v)^k\|<1$, and $(b+u-v)^K=K+1+u^K-v^K$, and consequently, either $K<(b+u-v)^K<K+1$ or $K+1+1/K\le(b+u-v)^K<K+2$.
In any case, $b+u-v\in A$, and $b-v\in A-u$, which is a contradiction.
Thus, $|u^k-v^k|<1/k$ for all $k\in\N$.
For any $\eps>0$, we can find a $b\in B$ and a $K\in\N$ such that $b^K\neq 0$, $|u^K-u^K|<\eps$.
Set $a^K:=b^K$ and $a^k:=u^k-v^k$ for all $k\ne K$.
Then $a=(a^k)\in A$ and $\|(a-u)-(b-v)\|=|a^K-b^K| <\eps$.
Hence, $d(A-u,B-v)=0.$
\end{example}

\subsection{Extremal principle}

The next well-known theorem (see Remark~\ref{R2.1} below) gives approximate dual necessary conditions of local extremality in terms of Fr\'echet normals.
It can be considered as a generalization of the classical convex \emph{separation theorem} to pairs of nonconvex sets.

\begin{theorem}[Extremal principle]\label{EP}
Suppose $X$ is an Asplund space, $A,B\subset X$ are closed and $\bx\in A\cap B$.
If the pair $\{A,B\}$ is locally extremal at $\bx$, then the following two equivalent conditions hold:
\begin{enumerate}
\item
for any $\eps>0$ there exist points
$a\in A\cap\B_\eps(\bar{x})$, $b\in B\cap\B_\eps(\bar{x})$ and $a^*\in X^*$ such that
\begin{gather}\label{EP-1}
\norm{a^*}=1,
\quad
d(a^*,N_A(a))<\varepsilon
\qdtx{and}
d(-a^*,N_B(b))<\varepsilon;
\end{gather}
\item
for any $\eps>0$ there exist points
$a\in A\cap\B_\eps(\bar{x})$, $b\in B\cap\B_\eps(\bar{x})$, $a^*\in{N}_{A}(a)$ and $b^*\in{N}_B(b)$ such that
\begin{gather}\label{EP-2}
\norm{a^*}+\norm{b^*}=1 \quad\mbox{and}\quad \norm{a^*+b^*}<\eps.
\end{gather}
\end{enumerate}
\end{theorem}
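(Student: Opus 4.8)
The plan is to establish the implication "locally extremal $\Rightarrow$ condition (i)" by the classical variational argument combining an Ekeland-type device with the Fréchet subdifferential sum rule valid in Asplund spaces, and then to derive the equivalence of (i) and (ii) purely formally from Lemma~\ref{L1}. First I would fix $\eps\in(0,1)$ and exploit local extremality: choosing $\rho>0$ from Definition~\ref{D1}(ii), pick $u,v\in X$ with $\max\{\|u\|,\|v\|\}$ as small as we like (say $<\gamma$ for a $\gamma$ to be calibrated against $\eps$ and $\rho$) and $(A-u)\cap(B-v)\cap\B_\rho(\bx)=\es$. The key object is the function $(x,y)\mapsto \|x-y-(u-v)\| + \iota_{A}(x+u\text{-shift})$... more precisely, working on $X\times X$ with the maximum norm, consider $f(x,y):=\|x-y\|$ restricted to the closed set $(A-u)\times(B-v)$ (or equivalently use indicator functions of $A$ and $B$ after translating). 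At the point $(\bx-u,\bx-v)$ this function is small (of order $\gamma$), so the Ekeland variational principle yields a nearby minimizer $(\hat a,\hat b)$ of the perturbed function $f(x,y)+\sqrt{\gamma}\,\|(x,y)-(\hat a,\hat b)\|$ over $(A-u)\times(B-v)$, with $(\hat a,\hat b)$ within $O(\sqrt\gamma)$ of $(\bx-u,\bx-v)$; since $\gamma$ is tiny relative to $\rho$, this minimizer lies in the interior of $\B_\rho(\bx)$ and hence $\hat a\ne \hat b$ (the sets are disjoint there), so $\|x-y\|$ is smooth near $(\hat a,\hat b)$.

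Next I would write down the optimality condition at $(\hat a,\hat b)$: $0\in \partial\big(f+\sqrt\gamma\|\cdot-(\hat a,\hat b)\|+\iota_{(A-u)\times(B-v)}\big)(\hat a,\hat b)$. Applying the fuzzy Fréchet subdifferential sum rule (this is exactly where Asplundness is used), one gets points $a'\in A-u$, $b'\in B-v$ arbitrarily close to $\hat a,\hat b$, and normals $a^*\in N_{A-u}(a')=N_A(a'+u)$, $b^*\in N_{B-v}(b')=N_B(b'+v)$ such that $a^*$ is within $O(\sqrt\gamma)$ of $\nabla_x\|x-y\|\big|_{(\hat a,\hat b)}$ and $b^*$ is within $O(\sqrt\gamma)$ of $\nabla_y\|x-y\|\big|_{(\hat a,\hat b)}$. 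Since the gradient of the norm at a nonzero point is a unit functional $e^*$ with $\nabla_x = e^*$, $\nabla_y=-e^*$, we obtain $\|a^*-e^*\|$ and $\|b^*+e^*\|$ both $O(\sqrt\gamma)$, hence $\|a^*\|$ close to $1$ and $\|a^*+b^*\|$ small. After normalizing $a^*$ to unit norm (a harmless perturbation of size $O(\sqrt\gamma)$) and setting the output point to be $a=a'+u\in A$, $b=b'+v\in B$ — which lie in $\B_\eps(\bx)$ once $\gamma$ is small enough — we arrive at $\|a^*\|=1$, $d(a^*,N_A(a))=0$, $d(-a^*,N_B(b))<\eps$, giving condition (i) (in fact with the first distance equal to zero, but $<\eps$ suffices).

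Finally, the equivalence (i)$\Leftrightarrow$(ii) is a direct application of Lemma~\ref{L1} with $K_1=N_A(a)$, $K_2=N_B(b)$: Lemma~\ref{L1}(i) converts a pair $z_1\in N_A(a)$, $z_2\in N_B(b)$ with $\|z_1\|+\|z_2\|=1$, $\|z_1+z_2\|<\eps$ into normalized vectors summing to zero that are within $\eps/(2(1-\eps))$ of the respective cones, and since the cones are convex and closed we can then project and renormalize to land exactly in $N_A(a)$, $N_B(b)$ — after adjusting $\eps$; conversely Lemma~\ref{L1}(ii) handles the other direction. One subtlety here is that (i) is phrased with a single unit vector $a^*$ and distances, while (ii) wants genuine normal vectors of norms summing to one; bridging this requires the elementary observation that $d(a^*,N_A(a))<\eps$ lets us replace $a^*$ by a nearby $\tilde a^*\in N_A(a)$, rescale, and absorb the error — again at the cost of enlarging $\eps$, which is permissible since $\eps>0$ is arbitrary.

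\textbf{Main obstacle.} The genuinely delicate step is the variational/sum-rule argument: one must set up the Ekeland perturbation so that the resulting minimizer stays strictly inside $\B_\rho(\bx)$ (so the constraint $\B_\rho(\bx)$ can be dropped and $\|x-y\|$ is differentiable there), while simultaneously keeping all the error terms — the Ekeland radius, the fuzzy sum-rule slack, and the normalization error — jointly controllable by the single parameter $\eps$. Getting this bookkeeping right, and in particular verifying that disjointness of $A-u$ and $B-v$ near $\bx$ forces $\hat a\ne\hat b$ so that the norm is smooth, is the crux; everything after the sum rule is routine estimation and the (i)$\Leftrightarrow$(ii) part is entirely mechanical given Lemma~\ref{L1}.
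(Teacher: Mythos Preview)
Your equivalence argument (i)$\Leftrightarrow$(ii) is essentially the paper's proof: the paper, too, reduces both directions to Lemma~\ref{L1}, using the rescaling $\xi=\eps/(1+\eps)$ so that the constants $\xi/(1-\xi)$ and $\xi/(2(1-\xi))$ come out exactly $\eps$ and $\eps/2$ (and then doubling to pass from the pair $\hat z_1=-\hat z_2$ to a single unit $a^*$). Your ``project and renormalize to land exactly in $N_A(a)$, $N_B(b)$'' remark is superfluous --- once you have a unit $a^*$ with the two distances small you are done with (i) --- but otherwise the mechanics match.

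The paper does \emph{not} prove the implication ``locally extremal $\Rightarrow$ (i)/(ii)'' at all; it treats this as known and only supplies the proof of the equivalence. Your Ekeland-plus-fuzzy-sum-rule sketch is the standard route alluded to in the paper's historical Remark~\ref{R2.1} (and carried out later in the paper via Theorem~\ref{KL}), so in that sense you are supplying more than the paper does here.

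One genuine slip in your variational part: in a general Asplund space the norm is not Fr\'echet differentiable, so you cannot speak of ``the gradient of the norm at a nonzero point'' or say that $\|x-y\|$ is smooth near $(\hat a,\hat b)$. What you actually get from $\hat a\ne\hat b$ is that the Fr\'echet subdifferential of $\|\cdot\|$ at $\hat a-\hat b$ is nonempty and contained in the unit sphere of $X^*$; any $e^*$ in it satisfies $\|e^*\|=1$ and $\langle e^*,\hat a-\hat b\rangle=\|\hat a-\hat b\|$. The fuzzy sum rule then delivers normals close to $(e^*,-e^*)$ exactly as you wrote. So replace ``gradient'' by ``a Fr\'echet subgradient'' and drop the smoothness claim; the rest of your bookkeeping (choice of $\gamma$, keeping the Ekeland point inside $\B_\rho(\bx)$, ensuring $\hat a\ne\hat b$) is correct.
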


\begin{remark}\label{R3.1}
The inequalities in \eqref{EP-1} and \eqref{EP-2} are only meaningful when $\eps\le1$, because otherwise they are direct consequences of the corresponding equalities.
Indeed, when $\eps>1$, condition (i) in Theorem~\ref{EP} is satisfied automatically while condition (ii) guarantees only the existence of nontrivial normals in the $\eps$-neighbourhood of $\bx$ to at least one of the sets $A$ and $B$, which
%In the case of Fr\'echet normals in Asplund spaces, this
is trivial as long as $\bx$ is a boundary point of one of the sets (which is the case when $\{A,B\}$ is locally extremal at $\bx$).
If conditions \eqref{EP-1} and \eqref{EP-2} hold with $\eps=1$, they also hold with some $\eps<1$.
Thanks to these observations, when applying Theorem~\ref{EP} or its extensions, one can always assume that $\eps<1$.
\end{remark}

Both conclusions in the above theorem are pretty common dual space properties used in many contemporary formulations of the extremal principle and its extensions.
Properties (i) and (ii) can be found e.g. in, respectively, \cite[Definition~2.5]{Mor06.1} (the \emph{approximate extremal principle}) and \cite[Definition~2.3]{Kru03} (the \emph{generalized Euler equation}); cf. \cite[property~(SP)$_S$]{Kru09}.
Condition (i) guarantees the existence of a pair of vectors $a^*$ and $b^*$ in the dual space, which are `almost normal' (up to $\eps$) to the corresponding sets at certain points with $a^*+b^*=0$ and $\|a^*\|=\|b^*\|=1$, while condition (ii) guarantees the existence of a pair of vectors $a^*$ and $b^*$ which are exactly normal (in the Fr\'echet sense) to the corresponding sets at certain points with their sum $a^*+b^*$ being small (up to $\eps$) and $\|a^*\|+\|b^*\|=1$.

The equivalence of the two properties is a consequence of Lemma~\ref{L1}.

\begin{proof} \emph{of the equivalence of conditions {\rm(i)} and {\rm(ii)} in Theorem~\ref{EP}}

(i) \folgt (ii).
Take an arbitrary $\eps>0$ and set $\xi:=\frac{\eps}{1+\eps}$.
It follows from (i) that there exist points
$a\in A\cap\B_\xi(\bar{x})$, $b\in B\cap\B_\xi(\bar{x})$  and $a^*\in X^*$ such that conditions \eqref{EP-1} hold true with $\xi$ in place of $\eps$.
Then
\begin{gather*}
\norm{\frac{a^*}{2}}+\norm{-\frac{a^*}{2}}=1\qdtx{and}
d\left(\frac{a^*}{2},N_A(a)\right) +d\left(-\frac{a^*}{2},N_B(b)\right) <\xi.
\end{gather*}
Using Lemma~\ref{L1}(ii), we can find $\hat a^*\in N_A(a)$ and $\hat b^*\in N_B(b)$ such that
$$\|\hat a^*\|+\|\hat b^*\|=1\qdtx{and}
\|\hat a^*+\hat b^*\|<\frac{\xi}{1-\xi} =\eps.$$

(ii) \folgt (i).
Take an arbitrary $\eps>0$ and set $\xi:=\frac{\eps}{1+\eps}$.
It follows from (ii) that there exist points
$a\in A\cap\B_\xi(\bar{x})$, $b\in B\cap\B_\xi(\bar{x})$, $a^*\in{N}_{A}(a)$ and $b^*\in{N}_B(b)$ such that conditions \eqref{EP-2} hold true with $\xi$ in place of $\eps$.
Using Lemma~\ref{L1}(i), we can find $\hat a^*,\hat b^*\in X^*$ such that
\begin{gather*}
\|\hat a^*\|+\|\hat b^*\|=1,\quad
\hat a^*+\hat b^*=0,
\\
d(\hat a^*,N_A(a))<\frac{\xi}{2(1-\xi)}=\frac{\eps}{2}, \quad
d(\hat b^*,N_B(b))<\frac{\xi}{2(1-\xi)}=\frac{\eps}{2}.
\end{gather*}
Then $\|2\hat a^*\|=1$, $d(2\hat a^*,N_A(a))<\eps$ and $d(-2\hat a^*,N_B(b))<\eps$.
\qed\end{proof}

Conditions (i) and (ii) in Theorem~\ref{EP} obviously hold if the pair $\{A,B\}$ is (not necessarily locally) extremal and $\bx\in A\cap B$.

\begin{remark}[Extremal principle: historical comments]\label{R2.1}
The extremality properties in parts (i) and (ii) of Definition~\ref{D1} were originally introduced in \cite{KruMor80.2} (see Definition~4.1 and Remarks~4.1 and 4.8), where their connections with the separation of sets were also discussed and the first version of the extremal principle was established first in finite dimensions in terms of limiting normal cones \cite[Theorem~4.1]{KruMor80.2} and then extended, with the help of the \EVP, to \emph{Fr\'echet smooth} spaces, i.e. Banach spaces admitting an equivalent norm Fr\'echet differentiable away from zero \cite[Theorem~6.1]{KruMor80.2}, in terms of sets of $\eps$-normal elements.
The latter result was formulated in the form similar to (but slightly weaker than) the condition (ii) in Theorem~\ref{EP}:
\begin{enumerate}
\item [(ii)$^\prime$]
for any $\eps>0$ there exist points
$a\in A\cap\B_\eps(\bar{x})$, $b\in B\cap\B_\eps(\bar{x})$, $a^*\in{N}_\eps(a\mid A)$ and $b^*\in{N}_\eps(b\mid B)$ such that conditions \eqref{EP-2} hold true.
\end{enumerate}
In the above condition, ${N}_\eps(a\mid A)$ stands for the set of $\eps$-normal elements \eqref{NC1} to $A$ at $a$.

A slightly weaker version of \cite[Theorem~6.1]{KruMor80.2} (under the stronger assumption Definition~\ref{D1}(i) instead of (ii)) was presented in \cite{KruMor80} accompanied by a short sketch of the proof.

While keeping the original pattern of the proof, the result of \cite[Theorem~6.1]{KruMor80.2} was strengthened in \cite[Theorem~2]{Kru85.1} along two directions: 1) the assumption of the existence of an equivalent Fr\'echet differentiable norm was relaxed to that of the existence of a \nbh\ $U$ of zero and a continuous function $\psi:U\to\R_+$ such that $\psi(x)=0$ if and only if $x=0$, $\psi$ is Fr\'echet differentiable on $U\setminus\{0\}$ with $\|\psi(x)\|\ge1$ for all $x\in U\setminus\{0\}$; and 2) conclusion (ii)$^\prime$ replaced by a stronger one (the \emph{$\eps$-extremal principle} \cite[Definition~2.5]{Mor06.1}):
\begin{enumerate}
\item [(i)$^{\prime}$]
for any $\eps>0$ there exist points
$a\in A\cap\B_\eps(\bar{x})$, $b\in B\cap\B_\eps(\bar{x})$ and $a^*\in{X}$ such that
\begin{gather*}%\label{R1-1}
\norm{a^*}=1,
\quad
a^*\in{N}_\eps(a\mid A)
\quad\mbox{and}\quad
-a^*\in{N}_\eps(b\mid B).
\end{gather*}
\end{enumerate}
Note that this condition is still in general weaker than condition (i) in Theorem~\ref{EP}.

The next important step was made by Mordukhovich and Shao in \cite[Theorem~3.2]{MorSha96} where, using the subdifferential characterizations of Asplund
spaces (the sum rule for Fr\'echet subdifferentials) established by Fabian \cite{Fab86,Fab89}, the extremal principle with minimal adjustments in the original proof was extended to general Asplund spaces.
In particular, it was shown that in Asplund
spaces condition (i)$^{\prime}$ above is equivalent to condition (i) in Theorem~\ref{EP}.
Moreover, it was also shown in \cite[Theorem~3.2]{MorSha96} that Theorem~\ref{EP} in its current form cannot be extended beyond Asplund
spaces.

The last observation raised the question about possible extension of the extremal principle to non-Asplund spaces and the right tools needed for that since the Fr\'echet ($\eps$-)normals cannot do the job.
It did not take long for the experts in this area to pinpoint those properties of normals which are actually used in the conventional proof of the extremal principle.
This led to several successful attempts to formulate these properties as sets of axioms and define several (very similar) abstract normal cones (and related subdifferentials) which could replace the Fr\'echet normal cones without changing much in the conventional proof of the extremal principle; see e.g. \cite{Iof98,BorJof98} and \cite[Subsection~2.5.3]{Mor06.1}.
This allowed extending the extremal principle to \emph{trustworthy} spaces (with respect to a given subdifferential/normal cone) \cite{Iof98} with Asplund spaces being trustworthy with respect to the Fr\'echet subdifferential and general Banach spaces being trustworthy with respect to, e.g., Clarke subdifferential.

Extending dual space results formulated in Asplund spaces in terms of Fr\'echet subdifferentials and normals, including the extremal principle, to general Banach spaces in terms of Clarke or other subdifferentials and normals, for which Banach spaces are trustworthy, has become a straightforward routine procedure.
In this paper for simplicity we restrict the presentation to Asplund spaces and Fr\'echet normals only.

We refer the readers to \cite[Section~2.6]{Mor06.1} for more historical comments.
\end{remark}

\begin{remark}[Extremal principle: two sets vs $n$ sets]\label{R2.2}
The original formulations of the definitions of extremality and extremal principle in \cite{KruMor80.2} and most of their subsequent reformulations and generalizations \cite{KruMor80,Kru81.2,Kru85.1,Mor06.1,MorSha96, BorJof98,BorZhu05} have been for the more general than in Definition~\ref{D1} and Theorem~\ref{EP} setting of $n\ge2$ sets.
This seemingly more general setting is in fact not much different in terms of ideas, proofs and applications from the case of two sets considered for simplicity in the current paper.
Moreover, it is well known (see e.g. \cite[proof of Theorem~6.1]{KruMor80.2}, \cite[p.~31]{Kru81.2}, \cite[proof of Theorem~2]{Kru85.1}, \cite[p.~111 and 112]{Kru05}, \cite[proof of Theorem~2.10]{Mor06.1}) that the case of $n$ sets $A_1,A_2,\ldots,A_n\subset X$ can be easily reduced to that of two sets: either $A:=A_1\times A_2\times\ldots\times A_n$ and $B:=\{(x,\ldots,x)\mid x\in X\}$ in $X^n$ or $A:=A_1\times A_2\times\ldots\times A_{n-1}$ and $B:=\{(x,\ldots,x)\mid x\in A_n\}$ in $X^{n-1}$.
This trick, sometimes referred to as \emph{Pierra's product space reformulation} \cite{Pie84}, is not easily applicable to the case of an infinite collection of sets treated in \cite{KruLop12.1}.

When dealing with arbitrary finite collections of sets, the seemingly weaker property of local extremality of a collection of $n$ sets can be considered as a particular case of the nonlocal extremality of a collection of $n+1$ sets.
\end{remark}

\begin{remark}[Nonlocal extremality]\label{R2.4}
The (nonlocal) extremality property, as defined in part (i) of Definition~\ref{D1}, does not use the assumption $A\cap B\ne\es$, present in the preamble of Definition~\ref{D1} as well as in the original definition of this property in \cite{KruMor80.2}.
The conventional proof of the extremal principle can proceed without this assumption (even getting a little shorter) and leading to a result (nonlocal extremal principle) which differs from the conclusions of Theorem~\ref{EP} below by the conditions
$a\in A\cap\B_\eps(\bar{x})$, $b\in B\cap\B_\eps(\bar{x})$ being replaced simply with $a\in A$, $b\in B$ and
\begin{equation}\label{R2.4-1}
\norm{a-b}<d(A,B)+\eps.
\end{equation}
Note that if $A\cap B=\es$, then $\{A,B\}$ is automatically extremal in the relaxed sense discussed in this remark.
\end{remark}

\begin{remark}[Normalization conditions]\label{R2.3}
The two conditions \eqref{EP-2} in Theorem~\ref{EP} can be replaced by the following single one:
\begin{gather}\label{EP-3}
\norm{a^*+b^*}<\eps\left(\norm{a^*}+\norm{b^*}\right).
\end{gather}
Observe that under condition \eqref{EP-3} vectors $a^*$ and $b^*$ cannot equal zero simultaneously.
The sum of the norms $\norm{a^*}+\norm{b^*}$ in \eqref{EP-2} and \eqref{EP-3} can be replaced by the maximum: $\max\left\{\norm{a^*},\norm{b^*}\right\}$, or more generally, by $|||\left(\norm{a^*},\norm{b^*}\right)|||_*$, where $|||\cdot|||_*$ is an arbitrary norm on $\R^2$.
A similar observation can be made regarding the expression $\max\{\|u\|,\|v\|\}$ in Definition~\ref{D1}, where the maximum can be replaced by the sum: $\|u\|+\|v\|$, or more generally, by $|||\left(\norm{u},\norm{v}\right)|||$, where $|||\cdot|||$ is an arbitrary norm on $\R^2$.
In fact, it would be natural to choose the norms $|||\cdot|||$ and $|||\cdot|||_*$ to be dual to each other.
\end{remark}

Theorem~\ref{EP} formulated for a pair of sets yields the following result for a single set, generalizing (in Asplund spaces) the Bishop-Phelps theorem \cite[Theorem~3.18]{Phe93} (cf. \cite[Corollary~1 from Theorem~2.1]{Kru81.2}, \cite[Corollary~3.4]{MorSha96}, \cite[Corollary~2.12.1]{Kru03}, \cite[Proposition~2.6]{Mor06.1}).

\begin{corollary}[Density of `support' points]\label{EPC}
Suppose $X$ is an Asplund space, $A\subset X$ is closed and $\bx\in\bd A$.
Then,
for any $\eps>0$, there exists a point
$a\in A\cap\B_\eps(\bar{x})$ such that
$N_A(a)\ne\{0\}$.
\end{corollary}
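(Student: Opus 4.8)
The plan is to deduce Corollary~\ref{EPC} from the extremal principle (Theorem~\ref{EP}) by choosing the second set so that the pair becomes locally extremal and applying condition (i) at the given boundary point. The natural choice is to take $B:=\{\bx\}$, since $\bx\in\bd A$ forces a suitable shift of $A$ away from $\bx$.

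First I would check that $\{A,B\}$ with $B=\{\bx\}$ is locally extremal at $\bx$ (in fact extremal in the sense of Definition~\ref{D1}(i)). Fix any $\eps>0$. Since $\bx\in\bd A$, there is a point $z\notin A$ with $\|z-\bx\|<\eps$. Put $u:=0$ and $v:=\bx-z$, so $\|v\|=\|z-\bx\|<\eps$. Then $B-v=\{z\}$, and since $z\notin A=A-u$, we get $(A-u)\cap(B-v)=\es$. This verifies \eqref{D1-2} (with any $\rho>0$, or \eqref{D1-1}), so the pair $\{A,B\}$ is locally extremal at $\bx$; note $B=\{\bx\}$ is closed and $\bx\in A\cap B$, so all hypotheses of Theorem~\ref{EP} are met.

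Next I would invoke condition (i) of Theorem~\ref{EP}: for the given $\eps>0$ there exist $a\in A\cap\B_\eps(\bx)$, $b\in B\cap\B_\eps(\bx)$ and $a^*\in X^*$ with $\|a^*\|=1$, $d(a^*,N_A(a))<\eps$ and $d(-a^*,N_B(b))<\eps$. Now $b=\bx$ and $B$ is a singleton, so $N_B(b)=X^*$ and the last condition is vacuous — the content is $\|a^*\|=1$ and $d(a^*,N_A(a))<\eps$. To conclude $N_A(a)\ne\{0\}$ I would argue by contradiction: if $N_A(a)=\{0\}$, then $d(a^*,N_A(a))=\|a^*\|=1$, contradicting $d(a^*,N_A(a))<\eps$ as soon as $\eps\le1$; and by Remark~\ref{R3.1} we may assume $\eps<1$ (for $\eps>1$ the statement is anyway trivial since $\bx\in A$ can be perturbed, or one simply applies the case of small $\eps$). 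Hence $N_A(a)\ne\{0\}$ with $a\in A\cap\B_\eps(\bx)$, which is exactly the assertion.

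I do not expect a genuine obstacle here; the only point requiring a little care is the handling of the trivial normal cone of the singleton $B$ and the reduction to $\eps<1$, both of which are dispatched by Remark~\ref{R3.1}. The essential mechanism — that a boundary point of a single set is, together with that point viewed as a degenerate second set, a locally extremal pair — is the standard route from the extremal principle to a Bishop--Phelps-type density statement.
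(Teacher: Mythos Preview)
Your proof is correct and follows exactly the paper's approach: apply Theorem~\ref{EP} to $A$ and $B:=\{\bx\}$. The paper's proof is a single line to this effect, while you have (correctly) supplied the details verifying local extremality and extracting the nontriviality of $N_A(a)$ via Remark~\ref{R3.1}.
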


\begin{proof}
The assertion follows by applying Theorem~\ref{EP} to $A$ and $B:=\{\bx\}$.
\qed\end{proof}

As observed in \cite[p.~177]{Mor06.1}, one can also go in the opposite direction: deducing a kind of extremal principle for a pair of sets from the density result for a single set in Corollary~\ref{EPC}.

\begin{proposition}\label{P2.1}
Suppose $X$ is an Asplund space, $A,B\subset X$ and $A-B$ is closed (for instance, both sets are closed and one of them is compact).
If the pair $\{A,B\}$ is extremal, then for any $\eps>0$ there exist points $a\in A$ and $b\in B$ satisfying $\norm{a-b}<\eps$, and an $a^*\in X^*$ such that
\begin{gather}\label{EP-1'}
\norm{a^*}=1,
\quad
a^*\in N_A(a)
\qdtx{and}
-a^*\in N_B(b).
\end{gather}
\end{proposition}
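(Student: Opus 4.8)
The plan is to reduce the statement to the density result of Corollary~\ref{EPC} applied to the \emph{single} closed set $C:=A-B$ at the origin. The first step is to verify that $0\in\bd C$. Since the extremality of $\{A,B\}$ is only meaningful when $A\cap B\ne\es$ (see the preamble of Definition~\ref{D1}), we may fix $\bx\in A\cap B$, so that $0=\bx-\bx\in C$. On the other hand, for every $\eps>0$ the extremality of $\{A,B\}$ provides $u,v\in X$ with $\max\{\norm u,\norm v\}<\eps$ and $(A-u)\cap(B-v)=\es$; the latter equality says precisely that $u-v\notin A-B=C$, while $\norm{u-v}\le\norm u+\norm v<2\eps$. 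Hence every neighbourhood of $0$ meets $X\setminus C$, so $0\notin\Int C$, and therefore $0\in\bd C$. As $C$ is closed by hypothesis, Corollary~\ref{EPC} yields, for the prescribed $\eps>0$, a point $c\in C\cap\B_\eps(0)$ with $N_C(c)\ne\{0\}$.

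The second step transfers this single-set conclusion back to $A$ and $B$. Write $c=a-b$ with $a\in A$ and $b\in B$, which is possible because $c\in A-B$; then $\norm{a-b}=\norm c<\eps$. The key point is the inclusion
\begin{gather*}
N_C(a-b)\subset N_A(a)\cap\bigl(-N_B(b)\bigr).
\end{gather*}
To establish it, take $x^*\in N_C(a-b)$. For $a'\in A\setminus\{a\}$ near $a$, the point $z:=a'-b$ lies in $C\setminus\{c\}$, converges to $c$ as $a'\to a$, and satisfies $z-c=a'-a$; plugging such $z$ into the $\limsup$ of definition \eqref{NC} for $N_C(c)$ forces $\limsup_{a'\to a,\,a'\in A\setminus\{a\}}\langle x^*,a'-a\rangle/\norm{a'-a}\le0$, i.e. $x^*\in N_A(a)$. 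By the symmetric choice $z:=a-b'$ for $b'\in B\setminus\{b\}$ near $b$ (so that $z-c=-(b'-b)$) one gets $-x^*\in N_B(b)$. Finally, choosing any $x^*\in N_C(c)\setminus\{0\}$ and setting $a^*:=x^*/\norm{x^*}$, the cone property of Fr\'echet normal cones gives $\norm{a^*}=1$, $a^*\in N_A(a)$ and $-a^*\in N_B(b)$, which is exactly \eqref{EP-1'}.

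I do not expect a real obstacle in this argument; the only mildly technical ingredient is the normal-cone inclusion displayed above, and even that is routine once one notices that translating one of the sets by a fixed point of the other embeds a punctured neighbourhood of $a$ in $A$ (respectively of $b$ in $-B$) into $C=A-B$ around $c=a-b$. The parenthetical sufficient condition — one set compact, the other closed — is merely the standard fact that then $A-B$ is closed, and calls for no separate argument.
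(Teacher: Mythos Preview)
Your proof is correct and follows essentially the same route as the paper: show $0\in\bd(A-B)$, apply Corollary~\ref{EPC} to the single closed set $A-B$, then split the resulting normal via $N_{A-B}(a-b)\subset N_A(a)\cap(-N_B(b))$. The paper simply asserts $0\in A-B$ and $0\notin\Int(A-B)$ and cites \cite[Proposition~1.27]{Kru03} for the normal-cone inclusion, whereas you spell out both steps; otherwise the arguments coincide.
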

\begin{proof}
Since $\{A,B\}$ is extremal, we have $0\in A-B$ and $0\notin\Int(A-B)$, hence, $0\in\bd(A-B)$.
Given an $\eps>0$, by Corollary~\ref{EPC} there are $a\in A$ and $b\in B$ with $\norm{a-b}<\eps$, and an $a^*\in N_{A-B}(a-b)$ with $\norm{a^*}=1$.
It remains to notice that the inclusion $a^*\in N_{A-B}(a-b)$ implies $a^*\in N_A(a)$ and $-a^*\in N_B(b)$ (see e.g. \cite[Proposition~1.27]{Kru03}).
\qed\end{proof}
\begin{remark}
1. The conditions in \eqref{EP-1'} guaranteed by Proposition~\ref{P2.1} are stronger than the corresponding conditions in \eqref{EP-1} in Theorem~\ref{EP}.
The latter conditions only guarantee that $a^*$ and $-a^*$ are close to $N_A(a)$ and $N_B(b)$, respectively.
On the other hand, unlike Theorem~\ref{EP}, Proposition~\ref{P2.1} cannot relate the points $a\in A$ and $b\in B$ to a particular point in $A\cap B$.

2. The statement of Proposition~\ref{P2.1} can be easily extended to the relaxed version of extremality without the assumption $A\cap B\ne\es$ (see Remark~\ref{R2.4}).
One only needs to replace the inequality $\norm{a-b}<\eps$ in the conclusion by condition \eqref{R2.4-1}.
In the case $A\cap B=\es$, as the next corollary shows, one can make another step and waive the assumption of the extremality of $\{A,B\}$.
\end{remark}

\begin{corollary}\label{C2.2}
Suppose $X$ is an Asplund space, $A,B\subset X$ and $A-B$ is closed (for instance, both sets are closed and one of them is compact).
If $A\cap B=\es$, then for any $\eps>0$ there exist points $a\in A$ and $b\in B$ satisfying condition \eqref{R2.4-1} and an $a^*\in X^*$ satisfying conditions \eqref{EP-1'}.
\end{corollary}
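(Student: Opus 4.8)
\textbf{Proof plan for Corollary~\ref{C2.2}.}
The idea is to reduce the case $A\cap B=\es$ to the situation already covered by Proposition~\ref{P2.1} by observing that when the sets are disjoint the relevant `extremal' behaviour comes for free. First I would recall, as noted in Remark~\ref{R2.4}, that if $A\cap B=\es$ then the pair $\{A,B\}$ is automatically extremal in the relaxed (nonlocal) sense in which the common-point assumption of Definition~\ref{D1} is dropped: indeed, taking $u=v=0$ already yields $(A-u)\cap(B-v)=A\cap B=\es$, so the defining shift condition holds trivially for every $\eps>0$. Thus the hypotheses of the relaxed version of Proposition~\ref{P2.1}, described in item~2 of the Remark following it, are met: $A-B$ is closed and $\{A,B\}$ is extremal in the relaxed sense.

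Next I would apply that relaxed version of Proposition~\ref{P2.1}. Its conclusion produces, for any $\eps>0$, points $a\in A$ and $b\in B$ together with an $a^*\in X^*$ satisfying \eqref{EP-1'}, namely $\norm{a^*}=1$, $a^*\in N_A(a)$ and $-a^*\in N_B(b)$; the only change from the original Proposition is that the estimate $\norm{a-b}<\eps$ is replaced by condition \eqref{R2.4-1}, that is, $\norm{a-b}<d(A,B)+\eps$. This is exactly the conclusion claimed in Corollary~\ref{C2.2}, so there is essentially nothing left to do.

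For completeness I would spell out the short argument behind the relaxed Proposition~\ref{P2.1} in this setting rather than merely quoting it. Since $0\notin A-B$ (the sets are disjoint) but $0\in\cl(A-B)=A-B$ is false — here one must be a little careful: when $A\cap B=\es$ we have $0\notin A-B$, so $0$ lies in $\cl(A-B)\setminus(A-B)$ only if $d(A,B)=0$, and otherwise $d(0,A-B)=d(A,B)>0$. In either case, pick any point $w\in A-B$ with $\norm{w}<d(A,B)+\eps$; since $A-B$ is closed, Corollary~\ref{EPC} applied with $B:=\{0\}$ being replaced by the observation that... — in fact the cleanest route is: the point $0$ need not be a boundary point of $A-B$, so instead choose $w\in\bd(A-B)$ or simply invoke the density-of-support-points statement after translating. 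To keep the argument transparent I would therefore argue as in Proposition~\ref{P2.1}: apply Corollary~\ref{EPC} to the closed set $C:=A-B$ at a point of $\bd C$ close to $0$ — existence of such a point with $\norm{w}<d(A,B)+\eps$ follows because $d(0,C)=d(A,B)$ and $C$ is closed — obtaining $c\in C\cap\B_\eps(w)$ with $N_C(c)\ne\{0\}$; normalising a nonzero element of $N_C(c)$ gives $a^*$, and writing $c=a-b$ with $a\in A$, $b\in B$, the inclusion $a^*\in N_{A-B}(a-b)$ yields $a^*\in N_A(a)$ and $-a^*\in N_B(b)$ by \cite[Proposition~1.27]{Kru03}.

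The main obstacle, and the only place requiring genuine care, is the bookkeeping around $d(A,B)$: one must verify that a boundary point $w$ of $A-B$ can be chosen with $\norm{w}<d(A,B)+\eps$, which rests on the elementary facts that $d(0,A-B)=d(A,B)$ and that a closed set containing points arbitrarily close to a given point either contains that point or has a boundary point arbitrarily close to it. Everything else is a direct citation of Proposition~\ref{P2.1} (in its relaxed form), Corollary~\ref{EPC}, and the standard normal-cone inclusion for differences of sets.
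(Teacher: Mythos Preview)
Your overall strategy is correct and essentially coincides with the paper's: pick a near-minimizing difference $w\in A-B$ with $\|w\|<d(A,B)+\eps'$, pass to a boundary point of $A-B$ with at most the same norm, apply Corollary~\ref{EPC} there, and split the resulting normal via $N_{A-B}(a-b)\subset N_A(a)\cap(-N_B(b))$. The paper does exactly this, starting with $\eps'=\eps/2$ to absorb the second $\eps'$ coming from Corollary~\ref{EPC}; you should do the same, since your bookkeeping as written yields $\|a-b\|<d(A,B)+2\eps$.

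There is, however, a genuine gap in your justification of the boundary-point step. Your stated reason --- ``a closed set containing points arbitrarily close to a given point either contains that point or has a boundary point arbitrarily close to it'' --- does not apply here: the ``given point'' is $0$, and since $A\cap B=\es$ and $A-B$ is closed, we have $0\notin A-B$ and hence $d(0,A-B)=d(A,B)>0$, so there are \emph{no} points of $A-B$ arbitrarily close to $0$. What is needed is not a limiting argument at $0$ but the observation that any segment from $0$ to a point of $A-B$ must meet $\bd(A-B)$. The paper makes this explicit: given $a'-b'\in A-B$, the set $\{t\in[0,1]\mid t(a'-b')\in A-B\}$ is closed, nonempty, and excludes $0$, so its infimum $\bar t>0$ is attained, and $\bar t(a'-b')\in\bd(A-B)$ with $\|\bar t(a'-b')\|\le\|a'-b'\|$. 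Writing $\bar t(a'-b')=a''-b''$ for some $a''\in A$, $b''\in B$ then gives the required boundary point. Once you replace your topological claim with this ray-shrinking step (and halve $\eps$ at the outset), your plan becomes a complete proof identical to the paper's.
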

\begin{proof}
Given an $\eps>0$, set $\eps'=\eps/2$.
There exist points $a'\in A$ and $b'\in B$ satisfying $\|a'-b'\|<d(A,B)+\eps'$.
Without loss of generality $a'-b'\in\bd(A-B)$.
Indeed, since $A-B$ is closed and $0\notin A-B$, the one-dimensional set $\{t\in[0,1]\mid t(a'-b')\in A-B\}$ is compact and its infimum is attained at some $\bar t>0$, which means that there exist $a''\in A$ and $b''\in B$ such that $a''-b''=\bar t(a'-b')$ and $a''-b''\in\bd(A-B)$.
Obviously $\|a''-b''\|\le\|a'-b'\|<d(A,B)+\eps'$.

By Corollary~\ref{EPC}, there are $a\in A$ and $b\in B$ with $\|(a-b)-(a'-b')\|<\eps'$, and an $a^*\in N_{A-B}(a'-b')$ with $\norm{a^*}=1$.
It remains to notice that $\|a-b\|<\|a'-b'\|+\eps'<d(A,B)+\eps$, and the inclusion $a^*\in N_{A-B}(a-b)$ implies $a^*\in N_A(a)$ and $-a^*\in N_B(b)$ (see e.g. \cite[Proposition~1.27]{Kru03}).
\qed\end{proof}

Corollary~\ref{C2.2} recaptures the main assertions in \cite[Theorem~1.1]{ZheYanZou17}.

\subsection{Stationarity and extended extremal principle}
The extremal principle in Theorem~\ref{EP} gives necessary conditions of (local) extremality which are in general not sufficient.
Just like in the classical analysis and optimization theory, it actually characterizes a weaker than extremality property which can be interpreted as a kind of \emph{stationarity}.
The properties in the next proposition came to life as a result of a search for the weakest assumptions on the sets $A$ and $B$ which still ensure the conclusions of the extremal principle.

%It was established in \cite{Kru04,Kru03} (see also some earlier versions in \cite{Kru98,Kru00,Kru02}) that

\begin{definition}[Stationarity] \label{D2}
Suppose $X$ is a normed linear space, $A,B\subset X$ and $\bx\in A\cap B$.
\begin{enumerate}
\item
The pair $\{A,B\}$ is
\emph{stationary} at $\bx$ if
for any $\varepsilon>0$ there exist a $\rho\in(0,\eps)$ and
$u,v\in{X}$ such that
\begin{gather}\label{D2-1}
(A-u)\cap (B-v)\cap{\B}_\rho(\bar{x})
=\emptyset
\qdtx{and}
\max\{\|u\|,\|v\|\}<\eps\rho;
\end{gather}
\item
The pair $\{A,B\}$ is
\emph{approximately stationary} at $\bx$ if for any $\varepsilon>0$ there exist $\rho\in(0,\eps)$,
$a\in A \cap{\B}_\eps(\bar{x})$, $b\in B \cap{\B}_\eps(\bar{x})$ and $u,v\in{X}$ such that \begin{gather}\label{D2-2}
(A-a-u)\cap(B-b-v)\cap(\rho\B)=\emptyset
\qdtx{and}
\max\{\|u\|,\|v\|\}<\eps\rho.
\end{gather}
\end{enumerate}
\end{definition}

Unlike \eqref{D1-2}, in conditions \eqref{D2-1} and \eqref{D2-2} the size of the ``shifts'' of the sets is related to that of the neighbourhood in which the sets become nonintersecting, namely $\max\{\|u\|,\|v\|\}/\rho<\eps$.
Compared to \eqref{D2-1}, in conditions \eqref{D2-2}, instead of the common point $\bx$, the sets $A$ and $B$ are considered near their own points $a$ and $b$, respectively.

The implications in the next proposition are immediate consequences of Definitions~\ref{D1} and \ref{D2}, while the equivalences were proved in \cite[Proposition~14]{Kru05}.

\begin{proposition}[Extremality vs stationarity]\label{P2.2}
Suppose $X$ is a normed linear space, $A,B\subset X$ and $\bx\in A\cap B$.
Consider the following properties:
\begin{enumerate}
\item
$\{A,B\}$ is extremal;
\item
$\{A,B\}$ is locally extremal at $\bx$;
\item
$\{A,B\}$ is stationary at $\bx$;
\item
$\{A,B\}$ is approximately stationary at $\bx$.
\end{enumerate}
Then
\rm(i) \folgt (ii) \folgt (iii) \folgt (iv).
If, additionally, $A$ and $B$ are convex, then
\rm(i) \iff (ii) \iff (iii) \iff (iv).
\end{proposition}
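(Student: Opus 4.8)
The plan is to establish the chain of implications (i) $\Rightarrow$ (ii) $\Rightarrow$ (iii) $\Rightarrow$ (iv) first, and then, under the additional convexity assumption, to close the loop by showing (iv) $\Rightarrow$ (i). The three forward implications are essentially bookkeeping on the definitions. For (i) $\Rightarrow$ (ii): if $\{A,B\}$ is extremal, then for every $\eps>0$ there are $u,v$ with $(A-u)\cap(B-v)=\es$ and $\max\{\|u\|,\|v\|\}<\eps$; intersecting with any ball $\B_\rho(\bx)$ preserves emptiness, so (ii) holds with, say, $\rho=1$. For (ii) $\Rightarrow$ (iii): fix the $\rho>0$ from local extremality; given $\eps>0$, apply local extremality with $\eps\rho'$ in place of $\eps$, where $\rho':=\min\{\rho,\eps/2\}$, to produce $u,v$ with $\max\{\|u\|,\|v\|\}<\eps\rho'$ and $(A-u)\cap(B-v)\cap\B_\rho(\bx)=\es$; since $\rho'\le\rho$, also $(A-u)\cap(B-v)\cap\B_{\rho'}(\bx)=\es$, and $\rho'\in(0,\eps)$, giving (iii). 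For (iii) $\Rightarrow$ (iv): given $\eps>0$, apply stationarity to obtain $\rho\in(0,\eps)$ and $u,v$ with $\max\{\|u\|,\|v\|\}<\eps\rho$ and $(A-u)\cap(B-v)\cap\B_\rho(\bx)=\es$; now take $a=b=\bx\in A\cap B$, which lies in $\B_\eps(\bx)$, and observe that $(A-\bx-u)\cap(B-\bx-v)\cap(\rho\B) = \big((A-u)\cap(B-v)\cap\B_\rho(\bx)\big)-\bx = \es$, which is exactly \eqref{D2-2}.

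For the convex case it suffices, by the forward implications just proved, to show (iv) $\Rightarrow$ (i). So assume $A$ and $B$ are convex and $\{A,B\}$ is approximately stationary at $\bx$. I would argue by contradiction: suppose $\{A,B\}$ is not extremal. Then there is an $\eps_0>0$ such that for all $u,v$ with $\max\{\|u\|,\|v\|\}<\eps_0$ one has $(A-u)\cap(B-v)\ne\es$; equivalently, $0\notin (A-B)-(u-v)$ fails for all small $u-v$, i.e. $w\in A-B$ whenever $\|w\|<\eps_0$ (taking $u=w,v=0$), so $\eps_0\B\subset A-B$ and hence $0\in\Int(A-B)$. Using this and the convexity of $A-B$, I want to contradict approximate stationarity: pick $\eps$ small (to be specified), and let $\rho\in(0,\eps)$, $a\in A\cap\B_\eps(\bx)$, $b\in B\cap\B_\eps(\bx)$, $u,v$ with $\max\{\|u\|,\|v\|\}<\eps\rho$ witness \eqref{D2-2}, so $(A-a-u)\cap(B-b-v)\cap\rho\B=\es$. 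This says $0\notin (A-a)-(B-b)-(u-v)$ after intersecting appropriately — more carefully, there is no $x\in\rho\B$ with $x\in A-a-u$ and $x\in B-b-v$, i.e. the convex sets $(A-a-u)\cap\rho\B$ and $(B-b-v)\cap\rho\B$ are disjoint, or rather $(A-a-u-x)\cap(B-b-v)$ avoids $\{0\}$ for... the cleanest route is: emptiness of $(A-a-u)\cap(B-b-v)\cap\rho\B$ means $\rho\B \cap \big((A-a-u)\cap(B-b-v)\big)=\es$, and since $\bx\in A\cap B$ forces $\bx-a-u$ to be ``near'' the first set and $\bx-b-v$ near the second, with $\|\bx-a\|,\|\bx-b\|<\eps$ and $\|u\|,\|v\|<\eps\rho<\eps$, both $A-a-u$ and $B-b-v$ meet $2\eps\B$; combined with convexity and $0\in\Int(A-B)$ one derives that $(A-a-u)$ and $(B-b-v)$ must actually intersect inside $\rho\B$ once $\eps$ is small relative to $\eps_0$ — a contradiction.

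The main obstacle, and the step requiring genuine care rather than bookkeeping, is precisely this last quantitative argument: translating $0\in\Int(A-B)$ (with a known inclusion radius $\eps_0$) into a lower bound that forces the shifted convex sets to overlap within the ball $\rho\B$, despite the shifts $a,b$ moving the reference point by up to $\eps$. The natural tool is a scaling/homothety argument exploiting convexity: because $\eps_0\B\subset A-B$ and $A-B$ is convex containing points from $(A-a)-(B-b)$ scaled by the small quantities involved, one gets $\delta\B\subset (A-a-u)-(B-b-v)$ for an explicit $\delta$ comparable to $\eps_0$ minus error terms of order $\eps$; choosing $\eps<\eps_0\rho/(\text{const})$ makes $\rho\B$ lie inside this difference set, i.e. $0\in (A-a-u-\rho\B)-(B-b-v)$... one must instead show $(A-a-u)\cap(B-b-v)\cap\rho\B\ne\es$, which follows because $\delta\B\subset(A-a-u)-(B-b-v)$ together with convexity gives, by a standard argument, a common point of the two sets inside a ball of radius $O(\eps/\delta)\cdot(\text{diam terms})$, which can be made $<\rho$. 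Since this is exactly the content of \cite[Proposition~14]{Kru05} cited in the statement, I would in the write-up either reproduce this homothety estimate in detail or, as the excerpt does, invoke that reference for the equivalences while giving the easy forward implications in full.
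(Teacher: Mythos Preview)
Your forward implications (i)$\Rightarrow$(ii)$\Rightarrow$(iii)$\Rightarrow$(iv) are correct and match the paper, which simply declares them ``immediate consequences of Definitions~\ref{D1} and \ref{D2}'' without spelling them out. For the convex equivalences the paper gives no argument at all and just cites \cite[Proposition~14]{Kru05}; your proposed fallback to that reference is therefore exactly what the paper does.

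That said, the sketch you attempt for (iv)$\Rightarrow$(i) in the convex case does not actually close, and it is worth seeing why. Deducing $0\in\Int(A-B)$ from the failure of extremality is fine, and so is the inclusion $\delta\B\subset(A-a-u)-(B-b-v)$ with $\delta$ close to $\eps_0$. The problematic step is the claim that ``$\delta\B\subset C-D$ together with convexity gives, by a standard argument, a common point of the two sets inside a ball of radius $O(\eps/\delta)$''. A ball in the difference $C-D$ by itself says nothing about \emph{where} $C$ and $D$ meet: two parallel half-spaces far from the origin have $C-D=X$ yet intersect only far away. What is really needed is a Robinson--Ursescu type estimate that uses \emph{both} $\eps_0\B\subset A-B$ \emph{and} the anchor $\bx\in A\cap B$ to show that the map $(y,z)\mapsto(A-y)\cap(B-z)$ is Lipschitz lower semicontinuous at $((\bx,\bx),0)$; this is precisely the content of the cited result. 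Since you flag this step as the genuine obstacle and defer to \cite{Kru05}, your final proposal coincides with the paper's treatment.
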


All the implications in Proposition~\ref{P2.2} can be strict;
see some examples in \cite{Kru05,Kru09}.
%The approximate stationarity property (ii) is the weakest of all the properties in Definitions~\ref{D1} and \ref{D2}.

Replacing in Theorem~\ref{EP} local extremality with approximate stationarity produces a stronger statement -- the \emph{extended extremal principle}, with the two equivalent conditions in the conclusion of Theorem~\ref{EP} becoming not only necessary but also sufficient, thus producing full duality.
The proof of the necessity in the next theorem is a refined version of the proof of Theorem~\ref{EP}, while the proof of the sufficiency is a straightforward consequence of the definitions and does not use the assumption of the Asplund property of the space; cf. \cite[Theorem~4.1]{Kru02}.
%Some earlier formulations of this result can be found in \cite{Kru98,Kru00,Kru02}.

\begin{theorem}[Extended extremal principle]\label{EEP}
Suppose $X$ is an Asplund space, $A,B\subset X$ are closed and $\bx\in A\cap B$.
The pair $\{A,B\}$ is approximately stationary at $\bx$ if and only if the two equivalent conditions in Theorem~\ref{EP} hold true.
\end{theorem}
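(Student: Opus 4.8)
The plan is to prove the two directions of the equivalence separately, since they have completely different characters. For the \emph{sufficiency} direction --- that either dual condition (say condition (ii)) implies approximate stationarity --- I would argue directly from the definitions, without invoking the Asplund property. Suppose condition (ii) holds: given $\eps>0$, pick points $a\in A\cap\B_\eps(\bx)$, $b\in B\cap\B_\eps(\bx)$ and normals $a^*\in N_A(a)$, $b^*\in N_B(b)$ with $\|a^*\|+\|b^*\|=1$ and $\|a^*+b^*\|$ small. The inclusion $a^*\in N_A(a)$ means that for points $x\in A$ near $a$ the inner product $\langle a^*,x-a\rangle$ is essentially nonpositive, and similarly for $b^*$; choosing a unit vector $e$ with $\langle a^*-b^*,e\rangle$ close to $\|a^*\|+\|b^*\|=1$ (possible by definition of the dual norm, up to an arbitrarily small error) and setting $u:=-t e$, $v:=t e$ for a small parameter $t=\eps\rho$, one checks that $(A-a-u)\cap(B-b-v)\cap(\rho\B)$ must be empty for $\rho$ small: any common point $z$ would give $z+u\in A-a$, $z+v\in B-b$ with $\|z\|\le\rho$, and evaluating $\langle a^*,z+u\rangle+\langle -b^*,z+v\rangle=\langle a^*+b^*,z\rangle-t\langle a^*-b^*,e\rangle$ leads to a contradiction between the near-nonpositivity coming from the normal cone inequalities and the strictly negative value forced by the choice of $e$ and $t$. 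This is essentially the elementary computation behind \cite[Theorem~4.1]{Kru02} and requires only bookkeeping of the $\eps$'s; Lemma~\ref{L1} is not even needed here since we already have condition (ii) in hand.

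For the \emph{necessity} direction --- approximate stationarity implies the dual conditions --- the plan is to reproduce the classical extremal-principle argument, now driven by the approximate stationarity data rather than local extremality. Fix $\eps\in(0,1)$ (legitimate by Remark~\ref{R3.1}). Approximate stationarity gives $\rho\in(0,\eps)$, points $a\in A\cap\B_\eps(\bx)$, $b\in B\cap\B_\eps(\bx)$ and shifts $u,v$ with $\max\{\|u\|,\|v\|\}<\eps\rho$ such that the shifted sets $A-u$ and $B-v$ do not intersect inside $\rho\B+a$ (after the obvious translation). Then consider on $X\times X$ the function $(x,y)\mapsto \|x-y\|$ restricted to $(A-u)\times(B-v)$ near $(a,b)$; since the sets are disjoint there, this function is bounded below by a positive quantity on the relevant ball, but it takes values close to $\max\{\|u\|,\|v\|\}$, which is small relative to $\rho$. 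Apply the Ekeland variational principle to get a nearby minimizer $(a',b')$ of a slightly perturbed function $\|x-y\|+\text{(small)}\|(x,y)-(a',b')\|$ over $(A-u)\times(B-v)$, lying in the interior of the ball so that it is a genuine local minimum of the sum of $\|x-y\|+\text{penalty}$ and the indicator functions of $A-u$ and $B-v$. Now invoke the Fr\'echet subdifferential sum rule valid in Asplund spaces (Fabian's fuzzy sum rule, via \cite{Fab86,Fab89,MorSha96}): at nearby points $a''\in A$, $b''\in B$ one obtains Fr\'echet normals $a^*\in N_A(a'')$ and $b^*\in N_B(b'')$ together with the subgradient of the norm, yielding $a^*+b^*\approx 0$ with $a^*,b^*$ of comparable size; after normalization this is exactly condition (ii), and condition (i) then follows by the already-established equivalence of (i) and (ii) (Lemma~\ref{L1}). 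One must track that $a'',b''$ stay within $\B_\eps(\bx)$, which holds because $\rho<\eps$ and $a,b\in\B_\eps(\bx)$ (shrinking $\eps$ at the start if necessary).

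The main obstacle is organizing the error estimates in the Ekeland step so that the ``fuzzy'' nature of the subdifferential sum rule in Asplund space does not destroy the quantitative conclusion. Concretely: the sum rule only produces normals at points $a'',b''$ \emph{near} the Ekeland minimizer, with the sum of normals only \emph{approximately} zero, and the norm subgradient only \emph{approximately} of unit length; one needs the penalty parameter in the Ekeland application, the radius of the fuzz in the sum rule, and the ratio $\max\{\|u\|,\|v\|\}/\rho$ to be coordinated so that the final $\|a^*+b^*\|$ is genuinely less than $\eps$ (or $\eps(\|a^*\|+\|b^*\|)$ in the form of Remark~\ref{R2.3}) after normalization. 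This is precisely the place where the proof of Theorem~\ref{EP} gets ``refined'': in the extremal case one had honest disjointness and the shift sizes could be sent to zero freely, whereas here $\rho$ itself is tied to $\eps$ and cannot be shrunk independently, so the bookkeeping is tighter. I would handle this by first reducing, via Remark~\ref{R2.3}, to producing $\|a^*+b^*\|<c\eps$ for a fixed constant $c$ and a rescaled parameter, and only at the very end absorbing $c$ by replacing $\eps$ with $\eps/c$ at the outset.
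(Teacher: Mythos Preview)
Your overall plan is sound and matches the classical route: sufficiency by a direct primal construction from the normal vectors, necessity by Ekeland plus the fuzzy sum rule in Asplund spaces. The paper, however, does not write this argument out at the point where Theorem~\ref{EEP} is stated; its explicit proof (given in Subsection~\ref{FE}, after Theorem~\ref{KL}) simply invokes Theorem~\ref{KL} as a black box. Part~(i) of that theorem packages exactly the Ekeland\,+\,sum-rule step you describe, with the error bookkeeping already done, and part~(ii) packages the primal construction for the converse. So your proposal is an unpacking of the paper's argument, and the ``main obstacle'' you flag --- coordinating the Ekeland penalty, the fuzz in the sum rule, and the ratio $\max\{\|u\|,\|v\|\}/\rho$ --- is precisely what Theorem~\ref{KL}(i) absorbs into the single estimate $\de>\max\{\|a-\bx\|,\|b-\bx\|\}+\rho(\eps+1)$. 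What you gain is self-containedness; what the paper gains is modularity (the same lemma then yields Theorems~\ref{T4.3} and \ref{T4.2} and the comparison with Theorem~\ref{ZN}).

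There is one genuine slip in your sufficiency sketch. With $u:=-te$, $v:=te$ and $e$ chosen so that $\langle a^*-b^*,e\rangle$ is close to $1$, the quantity $\langle a^*,z+u\rangle+\langle b^*,z+v\rangle$ (not $-b^*$; your displayed formula is in fact the one for $+b^*$) equals $\langle a^*+b^*,z\rangle-t\langle a^*-b^*,e\rangle$, which is approximately $-t$, i.e.\ \emph{negative}. That is perfectly compatible with the near-nonpositivity delivered by the two normal-cone inclusions, so no contradiction arises. You need the opposite sign, $u:=te$, $v:=-te$; then the sum becomes $\langle a^*+b^*,z\rangle+t\langle a^*-b^*,e\rangle\approx t$, and \emph{this} contradicts the bound $\le\delta(\|z+u\|+\|z+v\|)$ once $\rho$ and the normal-cone tolerance $\delta$ are taken small relative to $t/\rho$. (Equivalently: to push the sets apart you translate $A$ \emph{along} $a^*$, not against it.) With this sign corrected, and applying condition~(ii) with a parameter strictly smaller than the target $\eps$ so that the ratio $t/\rho$ can sit strictly between the two, the argument goes through.
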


\begin{remark}[Extended extremal principle: historical comments]%\label{R1}
The approximate stationarity property in part (ii) of Definition~\ref{D2} was originally introduced in a slightly different form in \cite[formula~(4)]{Kru98}, where the property was referred to as \emph{extremality near $\bx$}.
A version of the extended extremal principle was formulated in \cite[Theorem~2]{Kru98} in the setting of a Fr\'echet smooth Banach space in the form of condition (i)$^\prime$ in Remark~\ref{R2.1}.
The property in part (i) of Definition~\ref{D2} was also implicitly present in \cite{Kru98} (see formula~(5)).
The result was extended to Asplund spaces in \cite[Theorem~2]{Kru00}, where it was formulated in the form of condition (ii)$^\prime$ in Remark~\ref{R2.1}.
The full proof of the extended extremal principle in the form of condition (ii) in Theorem~\ref{EP} appeared in \cite[Theorem~4.1]{Kru02}, where the property in part (ii) of Definition~\ref{D2} was referred to as \emph{extended extremality (e-extremality) near $\bx$}.
It was shown in \cite[Theorem~3.7]{Kru03} that in its current form the result cannot be extended beyond Asplund spaces.

In \cite{Kru04,Kru05,Kru06} the properties in Definition~\ref{D2} are referred to as \emph{stationarity} and \emph{weak stationarity}, respectively.
\cite{Kru06} gives a slightly improved version of the definition of the last property, compared to that in \cite{Kru04,Kru05}.
%while the proof of the extended extremal principle produces better estimates.
The name \emph{approximate stationarity} for the property in part (ii) of Definition~\ref{D2} appeared in \cite{Kru09}.
Extensions of Theorem~\ref{EEP} to non-Asplund spaces are discussed in \cite{KruLop12.1}.
\end{remark}

\begin{remark}[Extended extremal principle: two sets vs $n$ sets]
Similarly to the case of extremality and extremal principle (see Remark~\ref{R2.2}), the stationarity properties and extended extremal principle are usually formulated for the setting of $n\ge2$ sets with the case of $n$ sets easily reduced the same way to that of two sets (see e.g. \cite[item~3]{Kru98}, \cite[Definition~4]{Kru00}, \cite[Definition~4.2]{Kru02}, \cite[Definition~3.5]{Kru03}, \cite[Proposition~7]{Kru04}), \cite[Proposition~20 and Remark~6]{Kru05}).
Remark~\ref{R2.3} applies entirely to Definition~\ref{D2} and Theorem~\ref{EEP}.
\end{remark}

\begin{remark}[Approximate stationarity vs transversality]
Theorem~\ref{EEP} can be reformulated as equivalence of the negations of the primal and dual properties involved in its statement: the absence of the approximate stationarity is equivalent to the absence of the generalized separation.
These are important \emph{regularity/transversality} properties of pairs of sets involved in constraint qualifications, qualification conditions in subdifferential calculus and convergence analysis of computational algorithms \cite{Kru05,Kru06,Kru09,KruTha13,KruTha15,Iof}.
They are known under various names.
A table illustrating the evolution of the terminology can be found in \cite[Section~2]{KruLukTha2}.
The transversality property of finite collections of sets (the negation of the approximate stationarity) is in a sense equivalent to the famous \emph{metric regularity} property of \SVM s; cf. \cite{Kru05,Kru06,Kru09}.
See also the discussion in Subsection~\ref{SS4.2} below.
\end{remark}

\section{More extensions}\label{S4}

\subsection
%{Kruger \& L\'opez, 2012 and Zheng \& Ng, 2006, 2011}
{Two recent extensions}
\label{FE}

As it was pointed out in Remark~\ref{R2.1}, the key tool used in the proof of Theorem~\ref{EP} (and also Theorem~\ref{EEP}) is the Ekeland variational principle.
Theorem~\ref{EEP} is in a sense the ultimate version of Theorem~\ref{EP} establishing the same conclusion (generalized separation) under the weakest possible assumptions on the pair of sets (approximate stationarity), thus, providing the complete duality (in the Asplund space setting) between the corresponding primal space and dual space properties.

The next natural step in the extremal principle refinement process is to single out the core part of the conventional proof of the extremal principle around the application of the Ekeland variational principle, identify the minimal assumptions on the sets and the immediate conclusions and formulate it as a separate statement.
Such a result (results) would expose the core arguments behind the extremal principle and could serve as a key building block when constructing other generalized separation statements, applicable in situations where the conventional (extended) extremal principle fails.

We are aware of two recent attempts of this kind: \cite[Theorem~3.1]{KruLop12.1} which served as a tool when extending Theorems~\ref{EP} and \ref{EEP} to infinite collections of sets, and \cite[Lemmas~2.1 and 2.2]{ZheNg06} used when proving fuzzy multiplier rules in set-valued optimization problems.
The last couple of lemmas have been further refined and strengthened in \cite[Theorems~3.1 and 3.4]{ZheNg11} and \cite[Theorem~1.1]{ZheYanZou17}.

%An attempt was made recently in \cite{KruLop12.1} to extend Theorems~\ref{EP} and \ref{EEP} to infinite collections of sets.
%For that purpose, it became necessary to single out the arguments in the conventional proof of the extremal principle which prepare the application of the Ekeland variational principle

The next two theorems are reformulations for the setting adopted in the current paper of \cite[Theorem~3.1]{KruLop12.1} and \cite[Theorem~3.4]{ZheNg11}, respectively.

%\todo[inline]{AK 27/05/17.
%To be reformulated for an Asplund space (or the other way around)}
\begin{theorem}[Kruger and L\'opez, 2012]\label{KL}
Suppose $X$ is an Asplund space, $A,B\subset X$ are closed and $\bx\in A\cap B$.
\begin{enumerate}
\item
If points $a\in A$, $b\in B$ and $u,v\in X$ satisfy conditions \eqref{D2-2}
with some $\eps>0$ and $\rho>0$,
then, for any $\de>\max\{\|a-\bx\|,\|b-\bx\|\}+\rho(\eps+1)$, there exist points
$a'\in A\cap\B_\de(\bar{x})$, $b'\in B\cap\B_\de(\bar{x})$ and $a^*\in{N}_{A}(a')$, $b^*\in{N}_B(b')$ satisfying conditions \eqref{EP-2}.
\item
If $a\in A$, $b\in B$ and
$a^*\in{N}_{A}(a)$, $b^*\in{N}_B(b)$ satisfy
conditions \eqref{EP-2}
for some $\eps>0$,
then, for any $\de>0$, there exists a $\rho\in(0,\de)$ and points
$u,v\in{X}$ satisfying conditions \eqref{D2-2}.
\end{enumerate}
\end{theorem}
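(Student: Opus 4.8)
The statement is, in essence, the ``core'' of the extremal principle: part~(i) is the primal-to-dual passage, to be proved via the \EVP\ together with the fuzzy sum rule for Fr\'echet subdifferentials (available because $X$ is Asplund), and part~(ii) is the easy dual-to-primal converse, obtained directly from the definition of the Fr\'echet normal cone. I would follow the pattern of \cite{KruLop12.1}.

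\emph{Part~(i).} Put $\al:=\max\{\|u\|,\|v\|\}$, so $\al<\eps\rho$ by \eqref{D2-2}. To convert the disjointness in \eqref{D2-2} into a minimisation, introduce an auxiliary variable $z$ encoding the ball $\rho\B$ and work on the complete metric space $A\times B\times\rho\B\subset X\times X\times X$ (maximum norm), with
\begin{gather*}
g(x,y,z):=\max\big\{\norm{(x-a-u)-z},\ \norm{(y-b-v)-z}\big\}.
\end{gather*}
If $g(x,y,z)=0$ then $x-a-u=y-b-v=z\in\rho\B$, i.e.\ $z$ belongs to the empty set in \eqref{D2-2}; hence $g>0$ on the whole space, while $g(a,b,0)=\al$. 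Thus $(a,b,0)$ is an $\al$-minimiser of the continuous function $g$, and for a parameter $\la>0$ the \EVP\ yields $(a',b',z')$ with $g(a',b',z')\le\al$, with $\max\{\|a'-a\|,\|b'-b\|,\|z'\|\}\le\la$, and strictly minimising $g(x,y,z)+\tfrac{\al}{\la}\max\{\|x-a'\|,\|y-b'\|,\|z-z'\|\}$ over $A\times B\times\rho\B$. Arranging that $z'$ is interior to $\rho\B$ (so that near $(a',b',z')$ that constraint is locally inactive and $(a',b',z')$ is a local minimiser over $X\times X\times X$ of $g(x,y,z)+i_A(x)+i_B(y)+\tfrac{\al}{\la}\max\{\cdots\}$, with $i_A,i_B$ the indicators of $A,B$), one also keeps $g>0$ near the point. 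Finally $\|a'-\bx\|\le\|a'-a\|+\|a-\bx\|$ and $\|b'-\bx\|\le\|b'-b\|+\|b-\bx\|$, which is where the bound $\de>\max\{\|a-\bx\|,\|b-\bx\|\}+\rho(\eps+1)$ is used: its slack $\rho\eps$ is precisely the room needed to take $\la$ slightly past $\rho$ (replacing the hard constraint $z\in\rho\B$ by a Lipschitz penalty of slope just above the Lipschitz modulus of $g$ in $z$, which keeps the Ekeland point inside $\rho\B$), so that $\al/\la$ can be pushed below $\eps/(1+\eps)$.

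\emph{Part~(i), differentiation.} Since $g$ and the Ekeland penalty are Lipschitz and $i_A,i_B$ are lower semicontinuous, the fuzzy sum rule gives, for every $\tau>0$, points near $(a',b',z')$ and subgradients $\xi=(\xi^x,\xi^y,\xi^z)\in\partial g$, a triple $(a^*,b^*,0)$ with $a^*\in N_A(a^\circ)$, $b^*\in N_B(b^\circ)$, and $(\eta^x,\eta^y,\eta^z)\in\tfrac{\al}{\la}\partial\max\{\cdots\}$, whose sum has norm $<\tau$. Where $g>0$ it is a maximum of two compositions of $\norm{\cdot}$ with affine maps having nonzero argument (hence convex), so its subgradients are triples $(\xi^x,\xi^y,-(\xi^x+\xi^y))$ with $\norm{\xi^x}+\norm{\xi^y}=1$; and $\norm{\eta^x}+\norm{\eta^y}+\norm{\eta^z}\le\al/\la$. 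Reading off the three component equations: $a^*$ lies within $\al/\la+\tau$ of $-\xi^x$ and $b^*$ within $\al/\la+\tau$ of $-\xi^y$, so $\norm{a^*}+\norm{b^*}>1-\al/\la-\tau$; the third equation — which carries \emph{no} normal-cone term because $z'$ is interior to $\rho\B$ — lets one replace $\xi^x+\xi^y=-\xi^z$ by $\eta^z$ up to an error of norm $<\tau$, so that $a^*+b^*$ equals an error of norm $<\tau$ minus $\eta^x+\eta^y+\eta^z$, whence $\norm{a^*+b^*}<\al/\la+\tau$. Dividing $a^*,b^*$ by $\norm{a^*}+\norm{b^*}$ (Fr\'echet normal cones are cones) gives $\norm{a^*}+\norm{b^*}=1$ and $\norm{a^*+b^*}<(\al/\la+\tau)/(1-\al/\la-\tau)$; with $\al/\la<\eps/(1+\eps)$ and $\tau$ small this is $<\eps$, and since $a^\circ\in A$, $b^\circ\in B$ lie in $\B_\de(\bx)$, this is exactly \eqref{EP-2}.

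\emph{Part~(ii).} Given $\de>0$, fix $\eps'\in(\norm{a^*+b^*},\eps)$ (nonempty since $\norm{a^*+b^*}<\eps$) and small $\ga,\ga_1,\ga_2>0$. By definition of $a^*\in N_A(a)$ and $b^*\in N_B(b)$ there is $r>0$ with $\ang{a^*,x-a}\le\ga\norm{x-a}$ for $x\in A$, $\norm{x-a}\le r$, and $\ang{b^*,y-b}\le\ga\norm{y-b}$ for $y\in B$, $\norm{y-b}\le r$. Pick unit vectors $e_1,e_2$ with $\ang{a^*,e_1}\ge\norm{a^*}-\ga_1$ and $\ang{b^*,e_2}\ge\norm{b^*}-\ga_1$, choose $\rho\in(0,\de)$ with $\rho(1+\eps')\le r$, and set $u:=(\eps'\rho-\ga_2)e_1$, $v:=(\eps'\rho-\ga_2)e_2$, so $\max\{\norm{u},\norm{v}\}<\eps'\rho<\eps\rho$. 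If some $z\in\rho\B$ lay in $(A-a-u)\cap(B-b-v)$, then $x:=z+a+u\in A$ and $y:=z+b+v\in B$ satisfy $\norm{x-a},\norm{y-b}\le\norm{z}+\max\{\norm{u},\norm{v}\}<\rho(1+\eps')\le r$, so adding the two normal-cone inequalities,
\begin{gather*}
\ang{a^*+b^*,z}+\ang{a^*,u}+\ang{b^*,v}\le\ga\big(\norm{z+u}+\norm{z+v}\big)\le 2\ga\rho(1+\eps').
\end{gather*}
But the \LHS\ is at least $-\norm{a^*+b^*}\rho+(\eps'\rho-\ga_2)(\norm{a^*}+\norm{b^*}-2\ga_1)=-\norm{a^*+b^*}\rho+(\eps'\rho-\ga_2)(1-2\ga_1)$, which for $\ga,\ga_1,\ga_2$ small exceeds $2\ga\rho(1+\eps')$ (because $\eps'>\norm{a^*+b^*}$) --- a contradiction. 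Hence $(A-a-u)\cap(B-b-v)\cap\rho\B=\es$, i.e.\ \eqref{D2-2} holds, with $\rho\in(0,\de)$ as required.

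\emph{Main obstacle.} Part~(ii) is routine. Everything delicate is in part~(i): (a)~ensuring, by the choice of the \EVP\ parameter $\la$ and of the device realising the constraint $z\in\rho\B$, that the Ekeland point falls where that constraint is inactive and $g$ is strictly positive --- this is what gives $\partial g$ the form $(\xi^x,\xi^y,-(\xi^x+\xi^y))$ with $\norm{\xi^x}+\norm{\xi^y}=1$ and kills the normal-cone term in the ``$z$''-equation; and (b)~the bookkeeping of the constants ($\la$ in~(i), $\eps'$ in~(ii)) so that the output lands exactly at the stated $\eps$ and inside $\B_\de(\bx)$.
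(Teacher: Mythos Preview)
The paper does not give its own proof of this theorem: it is stated as a reformulation of \cite[Theorem~3.1]{KruLop12.1} and then \emph{used} (to derive Theorem~\ref{EEP}, Theorem~\ref{ZN}$^\prime$, etc.), so there is nothing in the paper to compare your argument against line by line. That said, your proposal follows precisely the scheme the paper refers to --- \EVP\ plus the Asplund fuzzy sum rule for part~(i), and a direct primal argument from the definition of Fr\'echet normals for part~(ii) --- and you correctly identify the role of the slack $\de-\max\{\|a-\bx\|,\|b-\bx\|\}>\rho(\eps+1)$ in controlling simultaneously the localisation of $a',b'$ and the size of the Ekeland penalty.

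One point in part~(i) is genuinely hand-wavy and would need to be made precise: your treatment of the constraint $z\in\rho\B$. You want $\la$ ``slightly past $\rho$'' (so that $\al/\la<\eps/(1+\eps)$) while keeping the Ekeland point $z'$ in the interior of $\rho\B$, and you gesture at a Lipschitz penalisation to achieve this. As written, this does not quite close: if $\la>\rho$, EVP alone gives only $\|z'\|\le\la$, and a penalty of the form $L\cdot d(z,\rho\B)$ forces the \emph{exact} minimiser inside $\rho\B$ but says nothing about the approximate Ekeland point. The standard fixes are either (a)~to take $\la\in(\rho,\rho(1+\eps))$ and allow $z'\in\bd(\rho\B)$, absorbing the extra normal-cone term $N_{\rho\B}(z')$ into the third equation (it points outward and only helps the estimate on $a^*+b^*$), or (b)~to choose $\la<\rho$ and pick, from the outset, $\eps'\in(\al/\rho,\eps)$ so that $\al/\la<\eps'$ suffices after normalisation. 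Either route works with the constants you have; you just need to commit to one and carry the bookkeeping through. Part~(ii) is correct as written (the minor $\tau$ versus $2\tau$ slippage in your normalisation bound in part~(i) is harmless).
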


\begin{theorem}[Zheng and Ng, 2011]\label{ZN}
Suppose $X$ is an Asplund space, $A,B\subset X$ are closed, $A\cap B=\emptyset$.
If points $a\in A$ and $b\in B$ satisfy condition \eqref{R2.4-1} with some $\eps>0$,
then, for any $\la>0$ and $\tau\in(0,1)$, there exist points
$a'\in A\cap\B_{\la}(a)$, $b'\in B\cap\B_{\la}(b)$ and $a^*\in{X}^*$ such that
\begin{gather}\label{ZN-2}
\|a^*\|=1,
\quad
d(a^*,N_A(a'))+d(-a^*,N_B(b'))<\varepsilon/\lambda,
\\\label{ZN-3}
\tau\|a'-b'\|\le\langle a^*,a'-b'\rangle.
\end{gather}
\end{theorem}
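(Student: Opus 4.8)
The plan is to reduce Theorem~\ref{ZN} to the Kruger--L\'opez machinery of Theorem~\ref{KL}(i), or rather to the core variational argument underlying it, and to extract the extra directional condition~\eqref{ZN-3} from the fact that the penalty function being minimized is (a shift of) the norm $\|a-b\|$, whose Fr\'echet subdifferential at a nonzero point consists of norm-one functionals $x^*$ with $\langle x^*,a-b\rangle=\|a-b\|$. First I would set up the Ekeland variational principle on the product space $A\times B$ (with the maximum norm) applied to the lower semicontinuous function $f(x,y):=\|x-y\|$, starting from the point $(a,b)$ which by hypothesis is an $\eps$-minimizer: $f(a,b)=\|a-b\|<d(A,B)+\eps\le \inf_{A\times B} f+\eps$. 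Applying the EVP with an appropriately chosen Ekeland radius (of order $\lambda$, to respect the localization balls $\B_\lambda(a)$, $\B_\lambda(b)$ in the conclusion) yields a point $(a_0,b_0)\in A\times B$ with $\|a_0-a\|\le\lambda$, $\|b_0-b\|\le\lambda$, still approximately minimizing $f$, and such that $(a_0,b_0)$ exactly minimizes the perturbed function $(x,y)\mapsto \|x-y\|+\frac{\eps}{\lambda}\max\{\|x-a_0\|,\|y-b_0\|\}$ over $A\times B$.

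Next I would turn this exact minimization into a subdifferential inclusion. Since $X$ is Asplund and $A$, $B$ are closed, the Fr\'echet subdifferential sum rule (the fuzzy sum rule) applies to the sum $\|x-y\|+\frac{\eps}{\lambda}\|(x-a_0,y-b_0)\| + \iota_{A\times B}$ (here the last is the indicator of the constraint set): for any prescribed accuracy $\gamma>0$ there exist points $a'\in A$, $b'\in B$ arbitrarily close to $a_0$, $b_0$ (hence, shrinking $\gamma$, within $\B_\lambda(a)$, $\B_\lambda(b)$), together with a subgradient of $\|\cdot\|$ at the relevant argument, a subgradient of the max-norm penalty (of norm $\le\eps/\lambda$), and Fr\'echet normals $a^*_0\in N_A(a')$, $b^*_0\in N_B(b')$, whose sum is small in norm. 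Writing $p^*$ for the norm subgradient — a functional with $\|p^*\|\le 1$ and, at a nonzero argument, $\|p^*\|=1$ and $\langle p^*,a_0-b_0\rangle=\|a_0-b_0\|$ — one gets $a^*_0 = -p^* + (\text{small})$ and $b^*_0 = p^* + (\text{small})$. Setting $a^*:=p^*$ and normalizing (it is already essentially norm one since $\|a_0-b_0\|>0$, because $A\cap B=\es$ and $\|a_0-b_0\|$ is close to $d(A,B)>0$ for $\eps$ small — one should first reduce, as in Remark~\ref{R2.4} and Corollary~\ref{C2.2}, to the case where $\|a-b\|$ is so close to $d(A,B)$ that $a_0\ne b_0$), the inclusions $d(a^*,N_A(a'))\le\|a^*-a^*_0\|<\eps/\lambda$ and $d(-a^*,N_B(b'))<\eps/\lambda$ of~\eqref{ZN-2} follow by collecting the small terms, and~\eqref{ZN-3} follows because $\langle a^*,a_0-b_0\rangle=\|a_0-b_0\|$ and, choosing $a'$, $b'$ close enough to $a_0$, $b_0$, $\langle a^*,a'-b'\rangle$ is within any prescribed amount of $\|a'-b'\|$, in particular $\ge\tau\|a'-b'\|$ for the given $\tau<1$.

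The main obstacle, and the place requiring care rather than new ideas, is the bookkeeping of the three interacting small parameters: the Ekeland radius must be chosen small enough (of order $\lambda$, but also small relative to $\eps$ and to $d(A,B)$) so that the perturbed-minimizer $(a_0,b_0)$ lies in the right balls and stays away from the diagonal; the fuzzy-sum-rule accuracy $\gamma$ must then be chosen even smaller so that $(a',b')$ also lies in $\B_\lambda(a)\times\B_\lambda(b)$, so that $a'\ne b'$, so that $\langle a^*,a'-b'\rangle\ge\tau\|a'-b'\|$, and so that the residual from the sum rule does not spoil the strict inequality $<\eps/\lambda$ in~\eqref{ZN-2}. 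None of these steps is deep — they replicate the classical extremal-principle proof with the empty-intersection modification of Remark~\ref{R2.4} and the normalization observations of Remark~\ref{R2.3} — but the estimates must be assembled in the correct order. Alternatively, one can shortcut much of this by observing that~\eqref{R2.4-1} says exactly that $\{A,B\}$ is extremal in the relaxed sense of Remark~\ref{R2.4} with the witnessing point $a$ (equivalently $0\in\bd(A-B)$ after a reduction to a boundary point of $A-B$ as in Corollary~\ref{C2.2}), apply the nonlocal extremal principle, and then recover~\eqref{ZN-3} from the extra structure of the penalty; this is essentially the route used to prove Corollary~\ref{C2.2}, and the directional refinement~\eqref{ZN-3} is precisely the content flagged in Remark~\ref{R4.2}.3 as coming from subdifferentiating the norm.
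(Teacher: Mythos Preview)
The paper does not prove Theorem~\ref{ZN}: it is stated as a reformulation of \cite[Theorem~3.4]{ZheNg11} and treated as a known result from the literature. What the paper does prove is the derived statement Theorem~\ref{ZN}$^\prime$ (from Theorem~\ref{ZN} via Lemma~\ref{L1}(ii)) and then Proposition~\ref{P3.4}, which shows that Theorem~\ref{KL}(i) implies Theorem~\ref{ZN}$^\prime$ --- but not the full Theorem~\ref{ZN}, precisely because the directional condition~\eqref{ZN-3} is dropped in the passage to Theorem~\ref{ZN}$^\prime$.

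Your approach --- Ekeland on $(x,y)\mapsto\|x-y\|$ over $A\times B$, then the fuzzy sum rule, with \eqref{ZN-3} coming from the subdifferential of the norm at a nonzero point --- is exactly the route the paper attributes to the original Zheng--Ng proof in Remark~\ref{R4.2}.3, and it is correct in outline. Two small clarifications are worth making. First, your justification of $a_0\ne b_0$ via ``$\|a_0-b_0\|$ close to $d(A,B)>0$'' is unnecessary and in general wrong (nothing forces $d(A,B)>0$); the right reason is simply $a_0\in A$, $b_0\in B$, $A\cap B=\es$, and the same applies to the nearby fuzzy-sum-rule point at which the norm is subdifferentiated. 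Second, to obtain the strict inequality in~\eqref{ZN-2} you must exploit the strictness in~\eqref{R2.4-1}: choose $\eps'\in(\|a-b\|-d(A,B),\eps)$ and run Ekeland with penalty coefficient $\eps'/\la<\eps/\la$, leaving room for the sum-rule residual. You do flag this in your bookkeeping paragraph, but it is the one place where an actual choice must be made, not merely ``$\gamma$ small enough''.
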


First observe that the extremal principle in Theorem~\ref{EEP} is a direct corollary of Theorem~\ref{KL}.

\begin{proof}[Theorem~{\rm \ref{EEP}} from Theorem~{\rm \ref{KL}}]
Let the pair $\{A,B\}$ be approximately stationary at $\bx$.
We are going to show that condition (ii) in Theorem~\ref{EP} holds true.
Given an $\eps>0$, find an $\eps'>0$ such that $\eps'(\eps'+2)<\eps$.
By Definition~\ref{D2}(ii), there exist $\rho\in(0,\eps')$,
$a\in A \cap{\B}_{\eps'}(\bar{x})$, $b\in B \cap{\B}_{\eps'}(\bar{x})$ and $u,v\in{X}$ such that conditions \eqref{D2-2} are satisfied with $\eps'$ in place of $\eps$.
Then $\max\{\|a-\bx\|,\|b-\bx\|\}+\rho(\eps'+1) <\eps'+\eps'(\eps'+1)<\eps$, and it follows from Theorem~\ref{KL}(i) that there exist points
$a'\in A\cap\B_\eps(\bar{x})$, $b'\in B\cap\B_\eps(\bar{x})$, $a^*\in{N}_{A}(a')$ and $b^*\in{N}_B(b')$ satisfying \eqref{EP-2}, i.e. condition (ii) in Theorem~\ref{EP} holds true.

Conversely, let condition (ii) in Theorem~\ref{EP} holds true and an $\eps>0$ be given.
Then there exist points
$a\in A\cap\B_\eps(\bar{x})$, $b\in B\cap\B_\eps(\bar{x})$, $a^*\in{N}_{A}(a)$ and $b^*\in{N}_B(b)$ satisfying conditions \eqref{EP-2}.
By Theorem~\ref{KL}(ii), there exists a $\rho\in(0,\eps)$ and points $u,v\in{X}$ satisfying conditions \eqref{D2-2}, i.e. the pair $\{A,B\}$ is approximately stationary at $\bx$.
\qed\end{proof}

\begin{remark}%\label{R3.1}
The observations in Remark~\ref{R3.1} are applicable to Theorem~\ref{KL}(i): we can always assume that $\eps<1$.
Similarly, in Theorem~\ref{ZN} we can assume that $\eps<\la$.
\end{remark}

Next we compare the statements of Theorem~\ref{KL}(i) and Theorem~\ref{ZN}.
%\begin{remark}
There are important similarities between them: both establish a kind of generalized separation of the two sets, related somehow to the given pair of points $a\in A$ and $b\in B$ possessing a certain approximate `extremality' property.
There are also essential differences.
%\end{remark}

We start with comparing the assumptions in the two statements.
On the first glance, they look mutually exclusive: the first one assumes the existence of a point $\bx\in A\cap B$, while in the second theorem, it is assumed on the contrary that $A\cap B=\emptyset$.
However, this distinction is easy to overcome.
Given points $a\in A$ and $b\in B$ in Theorem~\ref{ZN}, one can set $A':=A-a$ and $B':=B-b$;
then $\bx:=0\in A'\cap B'$ (this trick is used in the proof of Theorem~\ref{ZN}$^\prime$ below).
This observation exposes also the different roles played by the pairs $a\in A$ and $b\in B$ in Theorem~\ref{KL}(i) and Theorem~\ref{ZN}.
In the first one, these are actually additional parameters having no analogues in Theorem~\ref{ZN}, which corresponds to $a=b=\bx$ in Theorem~\ref{KL}(i).

The second distinction is related to the main approximate `extremality' assumptions on the pair of sets: conditions \eqref{D2-2} in Theorem~\ref{KL}(i) and condition \eqref{R2.4-1} in Theorem~\ref{ZN}.
The next proposition shows that condition \eqref{R2.4-1} implies a stronger version of conditions \eqref{D2-2}.

\begin{proposition}[Conditions \eqref{D2-2} vs condition \eqref{R2.4-1}]\label{P1}
Suppose $X$ is a normed linear space, $A,B\subset X$, $A\cap B=\emptyset$ and $\eps>0$.
If points $a\in A$ and $b\in B$ satisfy condition \eqref{R2.4-1}, then there exist $u,v\in X$ such that $\|u\|=\|v\|<\frac{\eps}{2}$ and
\begin{equation}\label{P1-1}
(A-a-u)\cap(B-b-v)=\emptyset.
\end{equation}
Moreover, one can take
\begin{equation}\label{P1-2}
u:=\frac{\eps'}{2}\cdot\frac{b-a}{\norm{b-a}} \qdtx{and} v:=\frac{\eps'}{2}\cdot\frac{a-b}{\norm{b-a}},
\end{equation}
where, if $d(A,B)>0$, then $\eps'$ can be any number satisfying $\norm{b-a}-d(A,B)<\eps'<\min\{\eps,\norm{b-a}\}$, and if $d(A,B)=0$, then $\eps'=\norm{b-a}<\eps$.
\end{proposition}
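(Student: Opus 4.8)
The plan is to verify directly that the explicit vectors $u,v$ proposed in \eqref{P1-2} do the job. First I would handle the easy case $d(A,B)=0$. Then $\norm{b-a}<d(A,B)+\eps=\eps$ by \eqref{R2.4-1}, so setting $\eps':=\norm{b-a}$ we indeed have $\eps'<\eps$ and $\norm{u}=\norm{v}=\eps'/2<\eps/2$. Here $u=(b-a)/2$ and $v=(a-b)/2=-u$, so $-a-u=-b-v$ and consequently $A-a-u$ and $B-b-v$ are two translates of $A$ and $B$ by the \emph{same} vector $-a-u=-(a+b)/2$. Wait — that is not quite what is needed; I must instead observe that $(A-a-u)\cap(B-b-v)=\emptyset$ is equivalent, after adding the common vector $a+u$ back, to $(A) \cap (B + (a+u) - (b+v)) = A \cap (B + (a-b) + (u-v))$, and since $u-v = (b-a)$, this is $A\cap (B + (a-b)+(b-a)) = A\cap B = \emptyset$. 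So in the case $d(A,B)=0$ the separation \eqref{P1-1} follows simply from $A\cap B=\emptyset$ together with $u-v=b-a$, which makes the two translated sets coincide with $A$ and $B$ themselves up to the common shift $a+u$.

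For the case $d(A,B)>0$ the argument is the same in spirit but now I must choose $\eps'$ in the stated range. Since \eqref{R2.4-1} gives $\norm{b-a}<d(A,B)+\eps$, we have $\norm{b-a}-d(A,B)<\eps$; also trivially $\norm{b-a}-d(A,B)<\norm{b-a}$ because $d(A,B)>0$ (and $d(A,B)\le\norm{b-a}$ since $a\in A$, $b\in B$, so the interval is nonempty). Hence one can pick $\eps'$ with $\norm{b-a}-d(A,B)<\eps'<\min\{\eps,\norm{b-a}\}$, which yields $\norm{u}=\norm{v}=\eps'/2<\eps/2$ as required. With this choice $u-v=\eps'\,(b-a)/\norm{b-a}$, a positive scalar multiple of $b-a$ of length $\eps'<\norm{b-a}$. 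To prove \eqref{P1-1}, suppose for contradiction that some $x$ lies in $(A-a-u)\cap(B-b-v)$; then $a':=x+a+u\in A$ and $b':=x+b+v\in B$, and $a'-b'=(a-b)+(u-v)=(a-b)\bigl(1-\eps'/\norm{b-a}\bigr)$, so $\norm{a'-b'}=\norm{b-a}-\eps'$. But $\eps'>\norm{b-a}-d(A,B)$ gives $\norm{a'-b'}=\norm{b-a}-\eps'<d(A,B)$, contradicting $\norm{a'-b'}\ge d(A,B)$ since $a'\in A$, $b'\in B$. This proves \eqref{P1-1}, and a fortiori \eqref{P1-2} is an admissible choice.

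I do not anticipate a serious obstacle here; the only point requiring a little care is checking that the prescribed interval for $\eps'$ is nonempty in the case $d(A,B)>0$, which uses both the hypothesis \eqref{R2.4-1} and the elementary bound $d(A,B)\le\norm{b-a}$. One should also note at the start that $\norm{b-a}>0$ in both cases — indeed $b-a\neq 0$ since $A\cap B=\emptyset$ forces $a\neq b$ — so that the normalisations $(b-a)/\norm{b-a}$ in \eqref{P1-2} make sense.
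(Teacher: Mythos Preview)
Your proof is correct and follows essentially the same approach as the paper's: both define $u,v$ by \eqref{P1-2}, assume \eqref{P1-1} fails, compute that any common point yields $a'\in A$, $b'\in B$ with $b'-a'=(1-\eps'/\norm{b-a})(b-a)$, and then derive a contradiction either with $d(A,B)$ (case $d(A,B)>0$) or with $A\cap B=\emptyset$ (case $d(A,B)=0$). Your treatment of the $d(A,B)=0$ case via the observation that $u-v=b-a$ reduces \eqref{P1-1} directly to $A\cap B=\emptyset$ is a minor stylistic variant of the paper's argument that $\hat a=\hat b$, not a genuinely different route.
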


\begin{proof}
Suppose $u$ and $v$ are given by \eqref{P1-2} and condition \eqref{P1-1} is violated.
Then
$\hat a-a-u=\hat b-b-v$ for some $\hat a\in A$ and $\hat b\in B$, and
\begin{gather}\label{P1-3}
\hat b-\hat a=b-a-(v-u) =\left(1-\frac{\eps'}{\norm{b-a}}\right)(b-a).
\end{gather}
If $d(A,B)>0$, then $\eps'<\norm{b-a}$, and consequently,
\begin{gather*}%\label{P2-1}
\norm{b-a}-d(A,B)\ge\norm{b-a}-\norm{\hat b-\hat a} =\norm{b-a}-\left(1-\frac{\eps'}{\norm{b-a}}\right)\norm{b-a}=\eps',
\end{gather*}
which contradicts the choice of $\eps'$.
If $d(A,B)=0$, then $\eps'=\norm{b-a}$, and it follows from \eqref{P1-3} that $\hat b=\hat a$ which contradicts the assumption that $A\cap B=\emptyset$.
Thus, condition \eqref{P1-1} is true.
\qed\end{proof}

Proposition~\ref{P1} is not reversible: condition \eqref{P1-1} being satisfied with some small $u$ and $v$ does not imply that $\|a-b\|$ is close to the distance $d(A,B)$ between the two sets.

\begin{example}\label{E3.2}
Let $A:=\{(x_1,x_2)\in\R^2\mid x_2\le0\}$ and $B:=\{(x_1,x_2)\in\R^2\mid x_2\ge1\}$.
Then, assuming that $\R^2$ is equipped with e.g. the sum norm, $d(A,B)=1$.
If $a:=(\al,0)\in A$ and $b:=(\be,1)\in B$ with some $\al,\be\in\R$, then condition \eqref{P1-1} is satisfied with $u:=(0,\eps)$, $v:=(0,-\eps)$ and any $\eps>0$.
At the same time, $\|a-b\|=|\al-\be|+1$ can be arbitrarily large when the numbers $\al$ and $\be$ are far apart.
\end{example}

Thus, condition \eqref{P1-1}
%(and all the more condition \eqref{D2-1})
with small $u$ and $v$ is less restrictive than condition \eqref{R2.4-1}.
Moreover, the first condition in \eqref{D2-2} with $\rho<\infty$ is weaker than \eqref{P1-1} and allows for local versions of the corresponding properties.

The next assertion is immediate from Proposition~\ref{P1}.

\begin{corollary}\label{C3.3}
Suppose $X$ is a normed linear space, $A,B\subset X$, $A\cap B=\emptyset$.
If sequences $\{a_k\}\subset A$ and $\{b_k\}\subset B$ are such that $\|a_k-b_k\|\to d(A,B)$, then there exist sequences $\{u_k\},\{v_k\}\subset X$ converging to 0, such that
\begin{equation*}%\label{P1-1}
(A-a_k-u_k)\cap(B-b_k-v_k)=\emptyset.
\end{equation*}
\end{corollary}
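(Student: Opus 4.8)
The plan is to derive Corollary~\ref{C3.3} as a direct consequence of Proposition~\ref{P1}. For each $k\in\N$, set $\eps_k:=\|a_k-b_k\|-d(A,B)+1/k>0$ if $d(A,B)>0$ is not attained by the pair $(a_k,b_k)$; more robustly, I would just pick any $\eps_k>0$ with $\eps_k\to0$ that is admissible in the sense required by Proposition~\ref{P1}. Concretely: if $\|a_k-b_k\|>d(A,B)$, then (for $k$ large, so that $\|a_k-b_k\|-d(A,B)$ is small) the interval $(\|a_k-b_k\|-d(A,B),\;\min\{\eps_k,\|a_k-b_k\|\})$ is nonempty whenever $\eps_k>\|a_k-b_k\|-d(A,B)$ and $\eps_k$ is at most $\|a_k-b_k\|$; since $\|a_k-b_k\|\to d(A,B)$, the quantity $\|a_k-b_k\|-d(A,B)\to0$, so we may choose, say, $\eps_k:=2(\|a_k-b_k\|-d(A,B))$ once this is positive and less than $\|a_k-b_k\|$, and $\eps_k:=1/k$ otherwise (this covers the degenerate case $\|a_k-b_k\|=d(A,B)$, where Proposition~\ref{P1} still applies with condition \eqref{R2.4-1} holding for any positive $\eps$). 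Note $\eps_k\to0$ in all cases.

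Next I would apply Proposition~\ref{P1} to the pair $a_k\in A$, $b_k\in B$ with this $\eps_k$: condition \eqref{R2.4-1} reads $\|a_k-b_k\|<d(A,B)+\eps_k$, which holds by the choice of $\eps_k$. The proposition then furnishes $u_k,v_k\in X$ with $\|u_k\|=\|v_k\|<\eps_k/2$ and $(A-a_k-u_k)\cap(B-b_k-v_k)=\emptyset$. Since $\eps_k\to0$, we get $u_k\to0$ and $v_k\to0$, which is exactly the claimed conclusion. One small point to verify is that the upper bound constraint $\eps'_k<\|a_k-b_k\|$ from Proposition~\ref{P1} is compatible with our choice; this is automatic since $\eps_k=2(\|a_k-b_k\|-d(A,B))<\|a_k-b_k\|$ exactly when $\|a_k-b_k\|<2d(A,B)$, which holds for all large $k$ if $d(A,B)>0$, and in the remaining finitely many small-$k$ cases one may simply enlarge the interpretation or use $\eps_k=1/k$ there as well.

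There is essentially no obstacle here — the statement is a routine "sequentialization" of Proposition~\ref{P1}, and the only mild care needed is bookkeeping of the admissible range of $\eps'$ in the two cases $d(A,B)>0$ and $d(A,B)=0$, together with checking that the chosen $\eps_k$ tend to $0$. I would write it compactly as: \emph{For each $k$, apply Proposition~\ref{P1} with $\eps=\eps_k:=\|a_k-b_k\|-d(A,B)+1/k$ to obtain $u_k,v_k$ with $\|u_k\|=\|v_k\|<\eps_k/2\to0$ and $(A-a_k-u_k)\cap(B-b_k-v_k)=\emptyset$.} (When $d(A,B)=0$ one checks $\eps'_k=\|a_k-b_k\|<\eps_k$ directly; when $d(A,B)>0$, eventually $\|a_k-b_k\|-d(A,B)<1/k$ so $\eps_k<2/k$, still forcing $u_k,v_k\to0$.)
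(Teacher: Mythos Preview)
Your approach is correct and matches the paper's, which simply declares the corollary ``immediate from Proposition~\ref{P1}.'' Your final compact version is exactly right: pick $\eps_k:=\|a_k-b_k\|-d(A,B)+1/k$, note that \eqref{R2.4-1} holds, and invoke the first assertion of Proposition~\ref{P1} to get $u_k,v_k$ with $\|u_k\|=\|v_k\|<\eps_k/2\to0$. The earlier paragraphs worrying about the admissible range of $\eps'$ are unnecessary: that constraint appears only in the ``Moreover'' clause giving explicit formulas, whereas the existence statement of Proposition~\ref{P1} already delivers $u_k,v_k$ with $\|u_k\|=\|v_k\|<\eps_k/2$ directly.
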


With the sets in Example~\ref{E3.2}, one can easily see that the statement of Corollary~\ref{C3.3} is not reversible.

Now we are going to compare the conclusions of the two theorems.
Similarly to the two conditions in Theorem~\ref{EP}, they represent two different ways of formulating dual extremality/separation conditions:
in terms of normal (in Theorem~\ref{KL}(i)) or `almost normal' (in Theorem~\ref{ZN}) vectors, with the connection between the two formulations provided by Lemma~\ref{L1}.
However, unlike the two equivalent conditions in Theorem~\ref{EP} formulated `for any $\eps>0$', in both Theorem~\ref{KL}(i) and Theorem~\ref{ZN} the number $\eps>0$ is a given quantitative parameter.
Lemma~\ref{L1} used in the proof of the equivalence of the two conditions in Theorem~\ref{EP} cannot provide one-to-one translation between the two settings with the given $\eps>0$; it only gives estimates, and its application leads to some `loss of accuracy'.
Note that the proof of \cite[Theorem~3.1]{KruLop12.1}, where Theorem~\ref{KL} is taken from, contains estimates in terms of `almost normal' vectors and then employs the arguments used in the proof of Lemma~\ref{L1}(ii) to ensure that the vectors belong to the normal cones.
To make a fair comparison, one needs to either reformulate Theorem~\ref{ZN} in terms of normal vectors using Lemma~\ref{L1}, or extract the pre-Lemma~\ref{L1} statement from the proof of \cite[Theorem~3.1]{KruLop12.1}.
Below for simplicity we follow the first approach.
The next statement is a consequence of Theorem~\ref{ZN} and Lemma~\ref{L1}(ii).
\medskip

\noindent\bf{Theorem~\ref{ZN}$^\prime$}
\it Suppose $X$ is an Asplund space, $A,B\subset X$ are closed, $A\cap B=\emptyset$.
If points $a\in A$ and $b\in B$ satisfy condition \eqref{R2.4-1} with some $\eps>0$,
then, for any $\la>0$, there exist points
$a'\in A\cap\B_{\frac{\eps+\la}{2}}(a)$, $b'\in B\cap\B_{\frac{\eps+\la}{2}}(b)$, $a^*\in N_A(a')$ and $b^*\in N_B(b')$ such that
\rm
\begin{gather}\label{ZN-2'}
\|a^*\|+\|b^*\|=1
\qdtx{and}
\|a^*+b^*\|<\eps/\la.
\end{gather}
\medskip

\begin{proof}
Given $\eps>0$ and $\la>0$, set $\la':=\frac{\eps+\la}{2}$.
Thanks to Theorem~\ref{ZN}, there exist points
$a'\in A\cap\B_{\la'}(a)$, $b'\in B\cap\B_{\la'}(b)$ and $a^*\in{X}^*$ satisfying conditions \eqref{ZN-2} with $\la'$ in place of $\la$.
Then
\begin{gather*}
\norm{\frac{a^*}{2}}+\norm{-\frac{a^*}{2}}=1\qdtx{and}
d\left(\frac{a^*}{2},N_A(a')\right) +d\left(-\frac{a^*}{2},N_B(b')\right) <\frac{\varepsilon}{2\lambda}.
\end{gather*}
Using Lemma~\ref{L1}(ii), we can find $\hat a^*\in N_A(a')$ and $\hat b^*\in N_B(b')$ such that $$\|\hat a^*\|+\|\hat b^*\|=1\qdtx{and}
\|\hat a^*+\hat b^*\|<\frac{\eps/(2\la')}{1-\eps/(2\la')} =\frac{\eps}{2\la'-\eps}=\frac{\eps}{\la}.$$
\qed\end{proof}

Now the comparison is straightforward.
%the same result follows immediately from Theorem~\ref{KL}(i).

\begin{proposition}\label{P3.4}
Theorem~{\rm \ref{KL}(i)} implies Theorem~{\rm \ref{ZN}}$^\prime$.
\end{proposition}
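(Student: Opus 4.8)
The plan is to derive Theorem~\ref{ZN}$^\prime$ from Theorem~\ref{KL}(i) by translating the two sets so that they acquire a common point (the origin), and by using Proposition~\ref{P1} to convert the `empty-intersection' hypothesis \eqref{R2.4-1} of Theorem~\ref{ZN}$^\prime$ into the approximate extremality condition \eqref{D2-2} demanded by Theorem~\ref{KL}(i). Fix $a\in A$, $b\in B$ satisfying \eqref{R2.4-1} with the given $\eps>0$, and fix $\la>0$.

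First I would apply Proposition~\ref{P1}: it produces $u,v\in X$ defined by \eqref{P1-2}, with $\|u\|=\|v\|=\eps'/2$ and $(A-a-u)\cap(B-b-v)=\emptyset$, where in both cases $d(A,B)>0$ and $d(A,B)=0$ one has $0<\eps'<\eps$. Next, put $A':=A-a$, $B':=B-b$; these sets are closed and $\bx:=0\in A'\cap B'$. I would apply Theorem~\ref{KL}(i) to the pair $\{A',B'\}$ with $\bx=0$, with the two `additional points' of that theorem both taken to be the origin (this is the case $a=b=\bx$ alluded to above), with the shifts $u,v$ just obtained, with extremality parameter $\eps/\la$ in the role of $\eps$, and with a radius $\rho>0$ still to be chosen. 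The first relation in \eqref{D2-2} then reads $(A'-u)\cap(B'-v)\cap(\rho\B)=\emptyset$; since $A'-u=A-a-u$ and $B'-v=B-b-v$, it holds for every $\rho>0$ by the previous step. The second relation in \eqref{D2-2} amounts to $\max\{\|u\|,\|v\|\}=\eps'/2<(\eps/\la)\rho$, i.e. $\rho>\eps'\la/(2\eps)$; and since the additional points coincide with $\bx=0$, the admissible radii in Theorem~\ref{KL}(i) are all $\de>\rho(\eps/\la+1)=\rho(\eps+\la)/\la$, so the target radius $\de:=(\eps+\la)/2$ is admissible exactly when $\rho<\la/2$. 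Because $\eps'<\eps$, the interval $\bigl(\eps'\la/(2\eps),\la/2\bigr)$ is nonempty, and I would fix any $\rho$ in it.

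Theorem~\ref{KL}(i) then yields $\tilde a\in A'\cap\B_\de(0)$, $\tilde b\in B'\cap\B_\de(0)$, $a^*\in N_{A'}(\tilde a)$, $b^*\in N_{B'}(\tilde b)$ with $\|a^*\|+\|b^*\|=1$ and $\|a^*+b^*\|<\eps/\la$, i.e. conditions \eqref{EP-2} read with $\eps/\la$ in place of $\eps$. Setting $\hat a:=\tilde a+a\in A$ and $\hat b:=\tilde b+b\in B$, one has $\|\hat a-a\|=\|\tilde a\|<\de=(\eps+\la)/2$ and $\|\hat b-b\|<(\eps+\la)/2$, while translation-invariance of the Fr\'echet normal cone gives $N_{A'}(\tilde a)=N_A(\hat a)$ and $N_{B'}(\tilde b)=N_B(\hat b)$. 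Hence $\hat a,\hat b,a^*,b^*$ are precisely the objects required by Theorem~\ref{ZN}$^\prime$, and \eqref{ZN-2'} holds; this completes the argument.

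The step I expect to be the genuine obstacle is the bookkeeping of constants in the middle paragraph: the two constraints on $\rho$ — one from the shift-size bound in \eqref{D2-2}, the other from the localization radius produced by Theorem~\ref{KL}(i) — must be simultaneously satisfiable for the target radius $(\eps+\la)/2$, and they are only because Proposition~\ref{P1} supplies shifts with the \emph{strict} inequality $\eps'<\eps$; a merely soft bound $\|u\|,\|v\|<\eps/2$ would leave no room for $\rho$. Everything else is routine: closedness and the Asplund property are inherited by translates, the Fr\'echet normal cone is translation-invariant, and the parameter $\eps/\la$ passes verbatim from hypothesis to conclusion in Theorem~\ref{KL}(i).
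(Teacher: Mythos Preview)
Your proof is correct and follows essentially the same route as the paper: translate the sets so that $0\in(A-a)\cap(B-b)$, invoke Proposition~\ref{P1} to obtain the shifts, and then apply Theorem~\ref{KL}(i) with the `additional points' taken at the origin. The only cosmetic difference is in the bookkeeping: the paper fixes $\rho:=\la/2$ and uses the slightly smaller extremality parameter $\hat\eps:=\eps'/\la<\eps/\la$ (where $\eps'\in(2\|u\|,\eps)$), whereas you fix the parameter at $\eps/\la$ and instead choose $\rho$ in the open interval $(\eps'\la/(2\eps),\la/2)$; both choices exploit the same strict inequality $\eps'<\eps$ delivered by Proposition~\ref{P1}, and both land on the same radius $\de=(\eps+\la)/2$.
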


\begin{proof}%[Theorem~{\rm \ref{ZN}}$^\prime$ from Theorem~{\rm \ref{KL}(i)}]
Under the conditions of Theorem~{\rm \ref{ZN}}$^\prime$,
set $A':=A-a$ and $B':=B-b$.
Then $0\in A'\cap B'$.
By Proposition~\ref{P1}, there exist $u,v\in X$ such that $\|u\|=\|v\|<\frac{\eps}{2}$ and
%\begin{equation*}%\label{ZN-P1}
$(A'-u)\cap(B'-v)=\emptyset$.
%\end{equation*}
Choose an $\eps'\in(2\|u\|,\eps)$ and set $\rho:=\la/2$, $\hat\eps:=\eps'/\la$ and $\de:=(\eps+\la)/2$.
Then $\hat\eps<\eps/\la$ and $\de>\rho(\hat\eps+1)$.
By Theorem~\ref{KL}(i) applied to the sets $A'$ and $B'$ at 0, there exist points
$a'\in A\cap\B_\de(a)$, $b'\in B\cap\B_\de(b)$, $a^*\in{N}_{A}(a')$ and $b^*\in{N}_B(b')$ satisfying conditions \eqref{ZN-2'}.
\qed\end{proof}

Thus, Theorem~\ref{ZN}$^\prime$ is a special case of Theorem~\ref{KL}(i).
On the other hand, as demonstrated in \cite{ZheNg06,ZheNg11}, Theorem~\ref{ZN} (as well as its version formulated above as Theorem~\ref{ZN}$^\prime$) is sufficient for many important applications.
Next we show that Theorem~\ref{ZN}$^\prime$ implies the nonlocal version of the extremal principle.

\begin{corollary}[Nonlocal extremal principle]\label{NLEP}
Suppose $X$ is an Asplund space, $A,B\subset X$ are closed and $\bx\in A\cap B$.
If the pair $\{A,B\}$ is extremal at $\bx$, then the two equivalent conditions in Theorem~\ref{EP} hold true.
\end{corollary}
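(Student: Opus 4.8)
The plan is to derive Corollary~\ref{NLEP} from Theorem~\ref{ZN}$^\prime$ (equivalently, from the chain of implications established so far), treating the cases $A\cap B\ne\es$ and $A\cap B=\es$ on the way to the single normalized pair of points $\bx$. Recall that by Remark~\ref{R2.4}, if $\{A,B\}$ is extremal (in the nonlocal sense of Definition~\ref{D1}(i)), then for every $\eps>0$ there exist $u,v\in X$ with $\max\{\|u\|,\|v\|\}<\eps$ and $(A-u)\cap(B-v)=\es$. First I would fix $\eps\in(0,1)$ (legitimate by Remark~\ref{R3.1}) and use the extremality to produce such a shift. Setting $\widetilde A:=A-u$ and $\widetilde B:=B-v$, the shifted sets are still closed, have empty intersection, and $d(\widetilde A,\widetilde B)\le\|v-u\|<2\eps$; moreover $\bx-u\in\widetilde A$ and $\bx-v\in\widetilde B$, so the pair of points $a:=\bx-u\in\widetilde A$, $b:=\bx-v\in\widetilde B$ satisfies $\|a-b\|=\|v-u\|\ge d(\widetilde A,\widetilde B)$, hence condition \eqref{R2.4-1} holds for $\{\widetilde A,\widetilde B\}$ at $(a,b)$ with, say, the parameter $\eps$ itself (indeed $\|a-b\|<d(\widetilde A,\widetilde B)+\eps$ trivially since $\|a-b\|-d(\widetilde A,\widetilde B)\le\|a-b\|<2\eps$ — one simply takes the parameter there to be $2\eps$, or chooses the original shift small enough from the start).

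Next I would apply Theorem~\ref{ZN}$^\prime$ to $\{\widetilde A,\widetilde B\}$ at the points $a,b$. Given a target accuracy, choose $\la$ large relative to the extremality parameter so that $\eps/\la$ is as small as desired; this yields points $a'\in\widetilde A$, $b'\in\widetilde B$ with $\|a'-a\|,\|b'-b\|<\tfrac{\eps+\la}{2}$ and normals $a^*\in N_{\widetilde A}(a')$, $b^*\in N_{\widetilde B}(b')$ satisfying $\|a^*\|+\|b^*\|=1$ and $\|a^*+b^*\|<\eps/\la$. Translating back, $a'+u\in A$ with $N_A(a'+u)=N_{\widetilde A}(a')$ since normal cones are translation-invariant, and similarly $b'+v\in B$; thus we obtain $a''\in A$, $b''\in B$, $a^*\in N_A(a'')$, $b^*\in N_B(b'')$ satisfying \eqref{EP-2} (condition (ii) of Theorem~\ref{EP}). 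The last bookkeeping point is to check that $a''$ and $b''$ lie in $\B_\eps(\bx)$ in the final statement: here one should run the whole argument with a fresh small parameter, picking the original shift size and the radius $\tfrac{\eps+\la}{2}$ both controlled — explicitly, start with $\eps'>0$ with $\eps'(\eps'+2)<\eps$ as in the proof of Theorem~\ref{EEP} from Theorem~\ref{KL}, take $\la:=1$ say, so that $\|a''-\bx\|\le\|a'-a\|+\|u\|<\tfrac{\eps'+1}{2}+\eps'$, and rescale, or equivalently invoke Theorem~\ref{KL}(i) directly with $\de=\eps$ after verifying $\max\{\|a-\bx\|,\|b-\bx\|\}+\rho(\hat\eps+1)<\eps$ exactly as done in the proof of Theorem~\ref{EEP}.

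The cleanest route, in fact, is to bypass Theorem~\ref{ZN}$^\prime$ and feed the shifted data straight into Theorem~\ref{KL}(i): from the extremality pick $u,v$ with $\max\{\|u\|,\|v\|\}$ so small that Proposition~\ref{P1} (applied to $A':=A-\bx$, $B':=B-\bx$, or directly to the shifts) supplies, for the chosen small $\eps'$, points and shifts verifying \eqref{D2-2} at $\bx$ with $\rho$ small; then Theorem~\ref{KL}(i) with $\de=\eps$ delivers $a'\in A\cap\B_\eps(\bx)$, $b'\in B\cap\B_\eps(\bx)$ and the normals of \eqref{EP-2}. This is essentially the same computation as in the already-displayed proof ``Theorem~\ref{EEP} from Theorem~\ref{KL}'', with the approximate-stationarity hypothesis replaced by the (stronger) nonlocal extremality, so the neighbourhood bookkeeping goes through verbatim. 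I expect the only real obstacle to be the translation step: ensuring that after shifting by $u,v$ and then applying the generalized separation, the perturbed base points return to a genuine neighbourhood of the \emph{single} point $\bx\in A\cap B$ rather than of two distinct points $a\ne b$ — this is handled precisely by the inequality $\max\{\|a-\bx\|,\|b-\bx\|\}+\rho(\eps'+1)<\eps$, which forces the shift and the auxiliary radius $\rho$ to be chosen small before invoking Theorem~\ref{KL}(i). Everything else is routine translation-invariance of Fr\'echet normal cones and the elementary estimates already present in Proposition~\ref{P1} and Remark~\ref{R3.1}.
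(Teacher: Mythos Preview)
The paper places this corollary immediately after the sentence ``Next we show that Theorem~\ref{ZN}$^\prime$ implies the nonlocal version of the extremal principle'', so the whole point is to derive it \emph{from Theorem~\ref{ZN}$^\prime$}. Your ``cleanest route'' via Theorem~\ref{KL}(i) is therefore beside the point: it is certainly correct (extremality $\Rightarrow$ approximate stationarity by Proposition~\ref{P2.2}, then Theorem~\ref{EEP} applies), but it bypasses precisely the implication the corollary is meant to illustrate.

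Your Theorem~\ref{ZN}$^\prime$ route has the right skeleton --- shift by small $u,v$ to get $A':=A-u$, $B':=B-v$ with $A'\cap B'=\es$, take $a:=\bx-u$, $b:=\bx-v$, apply Theorem~\ref{ZN}$^\prime$, translate back --- but the parameter balance is the entire content and you do not carry it out. Taking ``$\la$ large'' blows up the radius $\tfrac{\eps+\la}{2}$ and loses $a',b'\in\B_\eps(\bx)$; taking $\la:=1$ gives $\|a''-\bx\|<\tfrac{\eps'+1}{2}+\eps'\approx\tfrac12$, which is useless when $\eps$ is small, and ``rescale'' does not fix this because $\la$ pulls in opposite directions (small for the neighbourhood, large for $\|a^*+b^*\|<\eps/\la$). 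The paper resolves this tension by first choosing the shift size $\eps'\in\bigl(0,\frac{\eps^2}{\eps+1}\bigr)$, which guarantees $\frac{2\eps'}{\eps}<2(\eps-\eps')$ so the interval is nonempty, and then picking $\la\in\bigl(\frac{2\eps'}{\eps},\,2(\eps-\eps')\bigr)$. With $\|a-b\|<2\eps'$ one gets from Theorem~\ref{ZN}$^\prime$ (applied with parameter $2\eps'$) both
\[
\max\{\|a'-\bx\|,\|b'-\bx\|\}<\eps'+\tfrac{\la}{2}<\eps'+(\eps-\eps')=\eps
\qdtx{and}
\|a^*+b^*\|<\tfrac{2\eps'}{\la}<\eps,
\]
which is exactly condition~(ii) of Theorem~\ref{EP}. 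That single inequality $\eps'<\eps^2/(\eps+1)$ is the missing idea in your write-up.
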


\begin{proof}
Let the pair $\{A,B\}$ be extremal and a number $\eps>0$ be given.
Choose an $\eps'\in\left(0,\frac{\eps^2}{\eps+1}\right)$.
Then $\frac{\eps'}{\eps}<\eps-\eps'$ and we can choose a $\la$ such that $\frac{2\eps'}{\eps}<\la<2(\eps-\eps')$.
There exist vectors $u,v\in{X}$
satisfying conditions \eqref{D1-1} with $\eps'$ in place of $\eps$.
Define $A':=A-u$, $B':=B-v$, $a:=\bx-u$ and $b:=\bx-v$.
Then $A'\cap B'=\es$ and $\norm{a-b}<2\eps'$.
Applying Theorem~\ref{ZN}$^\prime$, we find points $a'\in A$, $b'\in B$, $a^*\in N_{A'}(a'-u)=N_A(a')$ and $b^*\in N_{B'}(b'-v)=N_B(b')$ such that $\max\{\|a'-\bx\|,\|b'-\bx\|\}<\eps'+\la/2<\eps$, $\|a^*\|+\|b^*\|=1$
and
$\|a^*+b^*\|<2\eps'/\la<\eps$.
Thus, condition (ii) in Theorem~\ref{EP} is satisfied.
\qed\end{proof}

%\todo[inline]{AK 28/05/17.
%I think Theorem~\ref{ZN}$^\prime$ implies (nonlocal) extremal principle.
%Using the 3-set extension of Theorem~\ref{ZN}$^\prime$, it should be possible to prove Theorem~\ref{KL}(i).
%The two results seem to be equivalent.
%For the future: it could make sense formulating a local version of Theorem~\ref{ZN}.}
%In fact, the two results are in a sense equivalent.
%Next we show that the statement of Theorem~\ref{KL}(i) can be obtained as a consequence of Theorem~\ref{ZN}$^\prime$.

%\begin{proof}[Theorem~{\rm\ref{KL}(i) from Theorem~{\rm\ref{ZN}}$^\prime$}]
%Let the conditions of Theorem~\ref{KL}(i) be satisfied.
%Thanks to Remark~\ref{R2} , we can assume without loss of generality that $\al<1$.
%Set $A':=A-a-u$ and $B':=(B-b-v)\cap(\rho\B)$.
%Then $-u\in A'$, $-v\in B'$ and, by \eqref{D2-1} and \eqref{D2-2}, $A'\cap B'=\emptyset$ and $\|(-u)-(-v)\|<2\al\rho$.
%Applying Theorem~{\rm\ref{ZN}}$^\prime$ with $\eps:=2\al\rho$ and $\la:=2\rho$, we find points
%$a'\in A$ and $b'\in B$ with
%$a'-a-u\in A'\cap\B_{\frac{\eps+\la}{2}}(-u)$, $b'-b-v\in B'\cap\B_{\frac{\eps+\la}{2}}(-v)$, and $a^*\in N_{A'}(a'-a-u)$ and $b^*\in N_{B'}(b'-b-v)$ such that conditions \eqref{ZN-2'} hold true.
%\qed\end{proof}

\begin{remark}\label{R4.2}
1. It is not difficult to modify the proof of Corollary~\ref{NLEP} to cater for the relaxed version of nonlocal extremality without the assumption $A\cap B\ne\es$ (see Remark~\ref{R2.4}).

2. Theorem~\ref{ZN} does not seem to be able to recapture the full local extremal principle (as in Theorem~\ref{EP}), not to say the extended extremal principle (as in Theorem~\ref{EEP}).

3. Condition \eqref{ZN-3} in Theorem~\ref{ZN} determining the `direction' of the vector $a^*$ does not have a direct analogue in the statement of Theorem~\ref{KL}(i).
Together with the first condition in \eqref{ZN-2}, it comes from subdifferentiating a norm at a nonzero point in the proof of \cite[Theorem~3.4]{ZheNg11}.
Subdifferentiating a norm is an essential component also in the proofs of the conventional extremal principle and all its modifications, including the one in \cite[Theorem~3.1]{KruLop12.1}; so analogues of \eqref{ZN-3} are implicitly present in all such proofs.
Zheng and Ng \cite{ZheNg11} seem to be the first to notice the importance of conditions like \eqref{ZN-3} for recapturing the classical convex separation theorem, and make \eqref{ZN-3} explicit in the statement of \cite[Theorem~3.4]{ZheNg11}.
In the current paper, keeping in line with the conventional formulations and for the sake of simplicity of the presentation, we will not formulate analogues of the condition \eqref{ZN-3} in the subsequent statements.
\end{remark}

\subsection{Relative extremality and stationarity}%\label{gp}

The conventional definition of extremality (Definition~\ref{D1}) and most of its extensions presume that the sets have a common point.
However, as it was demonstrated in Subsection~\ref{FE}, there are natural situations which allow for and, in fact, require application of the extremal principle or its extensions to sets with empty intersection.
Such situations are formalized in the current subsection.

As it was observed in Remark~\ref{R2.4}, the nonlocal extremality property in Definition~\ref{D1}(i) does not use the assumption $A\cap B\ne\es$, and the conventional proof of the extremal principle can proceed without this assumption.
Now we are going to relax the definitions of local extremality and stationarity properties of pairs of sets.
Instead of considering both sets near a common point, we are going to consider each set near its own point.
The next definition builds on the simple trick employed in the proof of Proposition~\ref{P3.4} and present implicitly already in Definition~\ref{D2}(ii) of approximate stationarity and, in view of Proposition~\ref{P4.6} below, even in Definition~\ref{D1}.

\begin{definition}[Relative extremality and stationarity] \label{D4.1}
Suppose $X$ is a normed linear space, $A,B\subset X$, $a\in A$ and $b\in B$.
\begin{enumerate}
\item
The pair $\{A,B\}$ is
extremal relative to $a$ and $b$ if the pair $\{A-a,B-b\}$ is extremal, i.e.
for any $\varepsilon>0$ there
are $u,v\in X$ such that
\begin{gather*}%\label{D1-2}
(A-a-u)\cap (B-b-v)=\emptyset
\qdtx{and}
\max\{\|u\|,\|v\|\}<\eps.
\end{gather*}
\item
The pair $\{A,B\}$ is
locally extremal relative to $a$ and $b$ if the pair $\{A-a,B-b\}$ is locally extremal at 0, i.e.
there exists a $\rho>0$ such that for any $\varepsilon>0$ there
are $u,v\in X$ such that
\begin{gather}\label{D4.1-2}
(A-a-u)\cap (B-b-v)\cap(\rho{\B})=\emptyset
\qdtx{and}
\max\{\|u\|,\|v\|\}<\eps.
\end{gather}
\item
The pair $\{A,B\}$ is
stationary relative to $a$ and $b$ if the pair $\{A-a,B-b\}$ is stationary at 0, i.e.
for any $\varepsilon>0$ there exist a $\rho\in(0,\eps)$ and
$u,v\in{X}$ such that conditions \eqref{D2-2} hold true.
\item
The pair $\{A,B\}$ is
approximately stationary relative to $a$ and $b$ if the pair $\{A-a,B-b\}$ is approximately stationary at 0, i.e., for any $\varepsilon>0$ there exist a $\rho\in(0,\eps)$ and points
$a'\in A \cap{\B}_\eps(a)$, $b'\in B \cap{\B}_\eps(b)$ and $u,v\in{X}$ such that conditions \eqref{D2-2} with $a'$ and $b'$ in place of $a$ and $b$ hold true.
\end{enumerate}
\end{definition}

Definition~\ref{D4.1} reduces the extremality, local extremality, stationarity and approximate stationarity at individual points to the corresponding conventional properties in the sense of Definitions~\ref{D1} and \ref{D2}.
On the other hand,
Definitions~\ref{D1} and \ref{D2} are special cases of the corresponding items in Definition~\ref{D4.1} when $a=b=\bx$.
From Proposition~\ref{P2.2} we get the following statement.

\begin{proposition}[Extremality vs stationarity]\label{P4.1}
Suppose $X$ is a normed linear space, $A,B\subset X$, $a\in A$ and $b\in B$.
Consider the following properties:
\begin{enumerate}
\item
$\{A,B\}$ is extremal relative to $a$ and $b$;
\item
$\{A,B\}$ is locally extremal relative to $a$ and $b$;
\item
$\{A,B\}$ is stationary relative to $a$ and $b$;
\item
$\{A,B\}$ is approximately stationary relative to $a$ and $b$.
\end{enumerate}
Then
\rm(i) \folgt (ii) \folgt (iii) \folgt (iv).
If, additionally, $A$ and $B$ are convex, then
\rm(i) \iff (ii) \iff (iii) \iff (iv).
\end{proposition}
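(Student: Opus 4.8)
The plan is to derive Proposition~\ref{P4.1} directly from Proposition~\ref{P2.2} by unfolding the definitions in Definition~\ref{D4.1}. The key observation is that every one of the four relative properties in Definition~\ref{D4.1} is, by \emph{definition}, the corresponding conventional property of Definition~\ref{D1} or Definition~\ref{D2} applied to the shifted pair $\{A-a,B-b\}$ at the point $\bx:=0$. Concretely: $\{A,B\}$ is extremal relative to $a$ and $b$ iff $\{A-a,B-b\}$ is extremal; $\{A,B\}$ is locally extremal relative to $a$ and $b$ iff $\{A-a,B-b\}$ is locally extremal at $0$; and likewise for (stationarity) and (approximate stationarity) at $0$. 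Note also that $0\in(A-a)\cap(B-b)$ since $a\in A$ and $b\in B$, so the standing hypothesis $\bx\in(A-a)\cap(B-b)$ needed to invoke Proposition~\ref{P2.2} is automatically met.

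First I would record this translation explicitly, introducing the shorthand $A':=A-a$, $B':=B-b$, and noting $0\in A'\cap B'$. Then the four properties (i)--(iv) of Proposition~\ref{P4.1} for $\{A,B\}$ relative to $a,b$ coincide verbatim with properties (i)--(iv) of Proposition~\ref{P2.2} for the pair $\{A',B'\}$ at the point $\bx=0$. Applying Proposition~\ref{P2.2} to $\{A',B'\}$ therefore yields the chain of implications (i) $\Rightarrow$ (ii) $\Rightarrow$ (iii) $\Rightarrow$ (iv) for the relative properties, which is exactly the first assertion.

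For the convex case, I would simply remark that $A$ and $B$ are convex if and only if $A'=A-a$ and $B'=B-b$ are convex (a translate of a convex set is convex, and translation is invertible). Hence, when $A$ and $B$ are convex, Proposition~\ref{P2.2} applied to $\{A',B'\}$ gives the full equivalence (i) $\Leftrightarrow$ (ii) $\Leftrightarrow$ (iii) $\Leftrightarrow$ (iv), which transfers back to the relative properties of $\{A,B\}$. This completes the proof.

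There is essentially no technical obstacle here: the proposition is a bookkeeping corollary of Proposition~\ref{P2.2}, and the only thing to be careful about is making the dictionary between "relative to $a,b$" and "conventional, at $0$, for the shifted pair" fully precise for each of the four properties, together with the harmless check that $0$ lies in the intersection of the shifted sets and that convexity is preserved and reflected by translation. So the "main obstacle" is purely expository — stating the reduction cleanly — rather than mathematical.
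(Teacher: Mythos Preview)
Your proposal is correct and matches the paper's approach exactly: the paper does not give a separate proof but simply notes that Definition~\ref{D4.1} reduces each relative property to the corresponding conventional property of $\{A-a,B-b\}$ at $0$ and states that Proposition~\ref{P4.1} follows from Proposition~\ref{P2.2}. Your write-up just makes this reduction explicit (including the trivial checks that $0\in(A-a)\cap(B-b)$ and that convexity is preserved under translation), which is all that is needed.
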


Similarly, the next two theorems generalizing the conventional extended extremal principle in Theorem~\ref{EEP} and its extension in Theorem~\ref{KL} to the case of individual points are direct corollaries of Theorems~\ref{EEP} and \ref{KL}, respectively.

\begin{theorem}[Relative extended extremal principle]\label{T4.3}
Suppose $X$ is an Asplund space, $A,B\subset X$ are closed, $a\in A$ and $b\in B$.
The pair $\{A,B\}$ is  approximately stationary relative to $a$ and $b$ if and only if the following two equivalent conditions hold:
\begin{enumerate}
\item
for any $\eps>0$ there exist points
$a'\in A\cap\B_\eps(a)$, $b'\in B\cap\B_\eps(b)$ and $a^*\in X^*$ satisfying conditions \eqref{EP-1} with $a'$ and $b'$ in place of $a$ and $b$, respectively;
\item
for any $\eps>0$ there exist points
$a'\in A\cap\B_\eps(a)$, $b'\in B\cap\B_\eps(b)$, $a^*\in{N}_{A}(a')$ and $b^*\in{N}_B(b')$ satisfying conditions \eqref{EP-2}.
\end{enumerate}
\end{theorem}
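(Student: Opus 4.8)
The plan is to obtain Theorem~\ref{T4.3} simply by translating everything to the origin and quoting Theorem~\ref{EEP}. First I would set $A':=A-a$ and $B':=B-b$; these are closed subsets of the same Asplund space $X$, and $\bar x:=0\in A'\cap B'$. By Definition~\ref{D4.1}(iv), the pair $\{A,B\}$ is approximately stationary relative to $a$ and $b$ precisely when the pair $\{A',B'\}$ is approximately stationary at $0$ in the sense of Definition~\ref{D2}(ii). Applying Theorem~\ref{EEP} to $\{A',B'\}$ at $0$, this is equivalent to the two conditions of Theorem~\ref{EP} holding for $\{A',B'\}$ at $0$.

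It then remains to rewrite those two conditions back in terms of $A$ and $B$. The only points to check are the elementary translation identities: for any $a'\in A$, writing $x':=a'-a\in A'$, one has $x'\in\B_\eps(0)$ iff $a'\in\B_\eps(a)$, and $N_{A'}(x')=N_{A'}(a'-a)=N_A(a')$ directly from the definition \eqref{NC}, since $x-x'=x-(a'-a)$ runs over the same differences as $a'$ varies over $A$ near $a'$ versus $A'$ near $x'$ (the quotient $\langle x^*,x-a'\rangle/\|x-a'\|$ is unchanged). The same applies to $B$, $b$, $b'$, $b^*$. Consequently condition (i) of Theorem~\ref{EP} for $\{A',B'\}$ at $0$ reads exactly as condition (i) of Theorem~\ref{T4.3}, and likewise for (ii). The equivalence of (i) and (ii) in Theorem~\ref{T4.3} is then inherited from the corresponding equivalence in Theorem~\ref{EP} (whose proof via Lemma~\ref{L1} applies verbatim, being purely about vectors in $X^*$ and the cones $N_A(a'),N_B(b')$).

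There is essentially no obstacle here: the statement is, by design, a reformulation of Theorem~\ref{EEP} under the change of variables built into Definition~\ref{D4.1}. The only thing one must be a little careful about is the bookkeeping of which neighbourhood each point lives in — condition \eqref{EP-1} and \eqref{EP-2} are stated `for any $\eps>0$', so the translation does not even incur the loss of accuracy that Lemma~\ref{L1} causes when $\eps$ is a fixed parameter (as in Theorem~\ref{KL}). I would therefore present the proof in three short lines: translate, invoke Theorem~\ref{EEP}, and note that $\B_\eps(a)\leftrightarrow\B_\eps(0)$ and $N_{A-a}(\,\cdot\,)=N_A(\,\cdot\,+a)$ turn the conclusion of Theorem~\ref{EEP} for $\{A-a,B-b\}$ into the asserted conditions (i) and (ii), with the equivalence of the latter two following from Theorem~\ref{EP}.

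\begin{proof}
Set $A':=A-a$ and $B':=B-b$. These sets are closed, $0\in A'\cap B'$, and by Definition~\ref{D4.1}(iv) the pair $\{A,B\}$ is approximately stationary relative to $a$ and $b$ if and only if $\{A',B'\}$ is approximately stationary at $0$. By Theorem~\ref{EEP}, the latter holds if and only if the two equivalent conditions in Theorem~\ref{EP} hold for $\{A',B'\}$ at $\bx=0$. For any $a'\in A$ one has, directly from \eqref{NC}, $N_{A'}(a'-a)=N_A(a')$, and $a'-a\in\B_\eps(0)$ if and only if $a'\in\B_\eps(a)$; the same holds for $B$, $b$. Hence condition (i) (respectively (ii)) in Theorem~\ref{EP} for $\{A',B'\}$ at $0$ is precisely condition (i) (respectively (ii)) in the statement, and their equivalence is the equivalence proved in Theorem~\ref{EP}.
\qed\end{proof}
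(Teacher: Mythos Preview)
Your proposal is correct and matches the paper's approach exactly: the paper does not give a separate proof of Theorem~\ref{T4.3} but states that it is a \emph{direct corollary} of Theorem~\ref{EEP}, obtained via the reduction built into Definition~\ref{D4.1}(iv) (i.e., passing from $\{A,B\}$ relative to $a,b$ to $\{A-a,B-b\}$ at $0$). Your short proof spells out precisely this translation and the elementary identities $N_{A-a}(a'-a)=N_A(a')$, $\B_\eps(0)\leftrightarrow\B_\eps(a)$ needed to transcribe the conclusion.
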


\begin{theorem}[`Relative' version of Theorem~\ref{KL}]\label{T4.2}
Suppose $X$ is an Asplund space, $A,B\subset X$ are closed, $a\in A$ and $b\in B$.
\begin{enumerate}
\item
If points $\hat a\in A$, $\hat b\in B$ and $u,v\in X$ satisfy conditions \eqref{D2-2} with $\hat a$ and $\hat b$ in place of $a$ and $b$, respectively, and
some $\eps>0$ and $\rho>0$,
then, for any $\de>\max\{\|\hat a-a\|,\|\hat b-b\|\}+ \rho(\eps+1)$, there exist points
$a'\in A\cap\B_\de(a)$, $b'\in B\cap\B_\de(b)$, $a^*\in{N}_{A}(a')$ and $b^*\in{N}_B(b')$ satisfying conditions \eqref{EP-2}.
\item
Assertion {\rm(ii)} in Theorem~\ref{KL} holds true.
\end{enumerate}
\end{theorem}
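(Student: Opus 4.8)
The plan is to obtain both assertions as immediate translations of Theorem~\ref{KL}, exploiting the very definition of the relative properties: replace $\{A,B\}$ by the shifted pair $\{A-a,B-b\}$, apply Theorem~\ref{KL} at the common point $0\in(A-a)\cap(B-b)$, and translate the conclusion back. The only structural facts that are needed are that the emptiness condition in \eqref{D2-2} is unchanged under a simultaneous shift of a set and of the reference point, and that the Fr\'echet normal cone is translation invariant, $N_{A-a}(x-a)=N_A(x)$.

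For assertion (i) I would start from the hypothesis that $\hat a\in A$, $\hat b\in B$, $u,v\in X$ satisfy \eqref{D2-2} with $\hat a,\hat b$ in place of $a,b$ (for the given $\eps>0$, $\rho>0$), and set $A':=A-a$, $B':=B-b$, $\hat a':=\hat a-a\in A'$, $\hat b':=\hat b-b\in B'$. Since $A'-\hat a'-u=A-\hat a-u$ and likewise for $B'$, the quadruple $(\hat a',\hat b',u,v)$ satisfies \eqref{D2-2} for $\{A',B'\}$ with $\hat a',\hat b'$ in place of $a,b$ and the same $\eps,\rho$, and $0\in A'\cap B'$. Applying Theorem~\ref{KL}(i) to $\{A',B'\}$ at $\bx=0$ with auxiliary points $\hat a',\hat b'$, and noting $\max\{\|\hat a'\|,\|\hat b'\|\}+\rho(\eps+1)=\max\{\|\hat a-a\|,\|\hat b-b\|\}+\rho(\eps+1)$, I would obtain, for any $\de$ exceeding this quantity, points $a''\in A'\cap\B_\de(0)$, $b''\in B'\cap\B_\de(0)$ and $a^*\in N_{A'}(a'')$, $b^*\in N_{B'}(b'')$ satisfying \eqref{EP-2}. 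Translating back via $a':=a''+a$, $b':=b''+b$ and invoking translation invariance of the normal cone then yields $a'\in A\cap\B_\de(a)$, $b'\in B\cap\B_\de(b)$, $a^*\in N_A(a')$, $b^*\in N_B(b')$ satisfying \eqref{EP-2}, which is exactly the claim.

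For assertion (ii) there is in fact nothing to prove: the statement of Theorem~\ref{KL}(ii) never uses the common point $\bx$ — its hypothesis and its conclusion are phrased purely in terms of $a$, $b$, the normal cones $N_A(a)$, $N_B(b)$, and the shifted sets appearing in \eqref{D2-2} — so it already \emph{is} the `relative' statement. (If one prefers, applying Theorem~\ref{KL}(ii) to $\{A-a,B-b\}$ at $0$ and translating back by the same bookkeeping reproduces it verbatim, and its proof uses no common-point assumption.)

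Since the whole argument is translation bookkeeping, I do not expect a genuine obstacle; the only points requiring attention are (a) recording at the outset that $0\in(A-a)\cap(B-b)$, so that Theorem~\ref{KL} is applicable, and (b) checking that the threshold on $\de$ and the $\B_\de$-localization transform exactly, which is immediate from $\hat a'=\hat a-a$ and $\hat b'=\hat b-b$.
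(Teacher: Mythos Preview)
Your proposal is correct and is precisely the argument the paper has in mind: the paper does not give an explicit proof but states that Theorem~\ref{T4.2} is a ``direct corollary'' of Theorem~\ref{KL} via Definition~\ref{D4.1}, which defines the relative properties as the conventional ones for the shifted pair $\{A-a,B-b\}$ at $0$. Your translation bookkeeping (including the observation that Theorem~\ref{KL}(ii) never refers to $\bar x$) makes this explicit and matches the intended reduction.
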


Note that the extremality in part (i) of Definition~\ref{D4.1} can be considered as a special case of the local extremality in part (ii) of that definition with $\rho=\infty$.
On the other hand, as the next proposition shows, the local extremality can be considered as a special case of the extremality for a special pair of `localized' sets.

\begin{proposition}[Extremality vs local extremality]
%\label{P4.2}
Suppose $X$ is a normed linear space, $A,B\subset X$, $a\in A$ and $b\in B$.
If the pair $\{A,B\}$ is locally extremal relative to $a$ and $b$ with some $\rho>0$, then, for any $\rho'\in(0,\rho)$, the pair $\{A\cap\B_{\rho'}(a),B\cap\B_{\rho'}(b)\}$ is extremal relative to $a$ and $b$.
\end{proposition}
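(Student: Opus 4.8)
The plan is to unwind both sides through the definitions and the product‐space/shift trick that the section has been relying on. Recall that, by Definition~\ref{D4.1}(ii), the pair $\{A,B\}$ being locally extremal relative to $a$ and $b$ with parameter $\rho>0$ means precisely that the pair $\{A-a,B-b\}$ is locally extremal at $0$ with that same $\rho$: there is $\rho>0$ such that for every $\eps>0$ there are $u,v\in X$ with $(A-a-u)\cap(B-b-v)\cap(\rho\B)=\emptyset$ and $\max\{\|u\|,\|v\|\}<\eps$. On the other side, by Definition~\ref{D4.1}(i) we must show that the pair $\{A_{\rho'},B_{\rho'}\}$ with $A_{\rho'}:=A\cap\B_{\rho'}(a)$ and $B_{\rho'}:=B\cap\B_{\rho'}(b)$ is extremal relative to $a$ and $b$, i.e. that for every $\eps>0$ there are $u,v\in X$ with $(A_{\rho'}-a-u)\cap(B_{\rho'}-b-v)=\emptyset$ and $\max\{\|u\|,\|v\|\}<\eps$.

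First I would fix $\rho'\in(0,\rho)$ and an arbitrary $\eps>0$; without loss of generality we may also assume $\eps\le\rho-\rho'$ (shrinking $\eps$ only strengthens the conclusion, by Remark~\ref{R3.1}-type reasoning, or simply because a smaller $\eps$ gives the same nonintersection with an even smaller shift). Apply local extremality to obtain $u,v\in X$ with $(A-a-u)\cap(B-b-v)\cap(\rho\B)=\emptyset$ and $\max\{\|u\|,\|v\|\}<\eps$. These same $u$ and $v$ already satisfy the norm bound required for relative extremality of the localized pair; it remains only to verify the emptiness of the intersection $(A_{\rho'}-a-u)\cap(B_{\rho'}-b-v)$.

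The key step is this verification. Suppose, for contradiction, that some $w$ lies in that intersection. Then $w=x-a-u=y-b-v$ with $x\in A\cap\B_{\rho'}(a)$ and $y\in B\cap\B_{\rho'}(b)$. Now estimate $\|w\|$: from $w=x-a-u$ we get $\|w\|\le\|x-a\|+\|u\|<\rho'+\eps\le\rho'+(\rho-\rho')=\rho$. Hence $w\in\rho\B$. But $x-a\in A-a$ and $y-b\in B-b$, so $w=(x-a)-u=(y-b)-v$ witnesses a point of $(A-a-u)\cap(B-b-v)\cap(\rho\B)$, contradicting the choice of $u,v$. Therefore the intersection is empty, which is exactly condition~(i) of Definition~\ref{D4.1} for the localized pair, and we are done.

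I do not anticipate a serious obstacle here: the statement is essentially a bookkeeping exercise showing that the ``localization'' built into the definition of local extremality (the ball $\B_\rho$ around the reference point) can be absorbed into the sets themselves, at the cost of passing to the slightly smaller radius $\rho'$ and using the slack $\rho-\rho'$ to accommodate the shift $u$. The only mild care needed is the choice of $\eps$ small enough (relative to $\rho-\rho'$) so that the shifted, localized point still falls inside the ball $\rho\B$ where nonintersection is guaranteed; this is handled by the observation that it suffices to establish the defining property of extremality for all sufficiently small $\eps$, since for larger $\eps$ one may simply reuse the shifts obtained for a smaller one.
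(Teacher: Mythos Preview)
Your proof is correct and follows essentially the same route as the paper's: both exploit the slack $\rho-\rho'$ to ensure that any point in the shifted localized intersection lands in $\rho\B$, thereby contradicting the local extremality condition. The only cosmetic difference is that the paper picks an auxiliary $\eps'\in(0,\min\{\eps,\rho-\rho'\})$ up front and writes the argument as a chain of set inclusions $(A\cap\B_{\rho'}(a)-a-u)\cap(B\cap\B_{\rho'}(b)-b-v)\subset(A-a-u)\cap(B-b-v)\cap(\rho\B)=\emptyset$, whereas you reduce to $\eps\le\rho-\rho'$ and argue by contradiction with a single element $w$; these are equivalent presentations of the same estimate.
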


\begin{proof}
Let the pair $\{A,B\}$ be locally extremal relative to $a$ and $b$ with some $\rho>0$, and numbers
$\eps>0$ and $\rho'\in(0,\rho)$ be given.
Choose an $\eps'\in(0,\min\{\eps,\rho-\rho'\})$.
Then there exist $u,v\in X$ satisfying conditions \eqref{D4.1-2} with $\eps'$ in place of $\eps$.
Hence, $\max\{\|u\|,\|v\|\}<\eps'<\eps$, $\rho'\B-u\subset\rho\B$, $\rho'\B-v\subset\rho\B$, and
\begin{align*}
(A\cap\B_{\rho'}(a)-a-u)&\cap(B\cap\B_{\rho'}(b)-b-v)
\\
&=(A-a-u)\cap(\rho'\B-u)\cap(B-b-v)\cap(\rho'\B-v)
\\
&\subset(A-a-u)\cap(B-b-v)\cap(\rho\B)=\es.
\end{align*}
Thus, the pair $\{A\cap\B_{\rho'}(a),B\cap\B_{\rho'}(b)\}$ is extremal relative to $a$ and $b$.
\qed\end{proof}

In view of Proposition~\ref{P4.1}, the next proposition shows that all the properties in Definition~\ref{D4.1} are meaningful only when $a\in\bd A$ and $b\in\bd B$.

\begin{proposition}[Approximate stationarity relative to boundary points]
%\label{P4.2}
Suppose $X$ is a normed linear space, $A,B\subset X$, $a\in A$ and $b\in B$.
If the pair $\{A,B\}$ is approximately stationary relative to $a$ and $b$, then $a\in\bd A$ and $b\in\bd B$.
\end{proposition}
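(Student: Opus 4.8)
The plan is to argue by contrapositive: assuming that $a\notin\bd A$ (the case $b\notin\bd B$ being symmetric), I will show that the pair $\{A,B\}$ cannot be approximately stationary relative to $a$ and $b$. If $a\notin\bd A$, then since $a\in A$, the point $a$ must lie in $\Int A$, so there is a radius $r>0$ with $\B_r(a)\subset A$.

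Next I would unwind Definition~\ref{D4.1}(iv): approximate stationarity relative to $a$ and $b$ means that for every $\eps>0$ there are $\rho\in(0,\eps)$, points $a'\in A\cap\B_\eps(a)$, $b'\in B\cap\B_\eps(b)$ and $u,v\in X$ with $\max\{\|u\|,\|v\|\}<\eps\rho$ and
\begin{gather*}
(A-a'-u)\cap(B-b'-v)\cap(\rho\B)=\emptyset.
\end{gather*}
The key point is that the first set in this intersection actually contains a full ball around the origin of radius independent of $\rho$, up to the small shifts. Indeed, if $\eps$ is chosen small enough (say $\eps<r/3$ and $\eps<1$, so $\eps\rho<\rho<\eps$), then $\|a'-a\|<\eps$ and $\|u\|<\eps\rho<\eps$, hence for any $z\in\rho\B$ we have $\|(a'+u+z)-a\|\le\|a'-a\|+\|u\|+\|z\|<\eps+\eps+\rho<3\eps<r$, so $a'+u+z\in\B_r(a)\subset A$, i.e. $z\in A-a'-u$. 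Therefore $\rho\B\subset A-a'-u$, and the displayed intersection reduces to $(B-b'-v)\cap(\rho\B)$, which must then be empty. But $0\in B-b'-v$ would require $b'+v\in B$, which need not hold; instead I note $b'\in B$, so $b'-b'-v=-v\in B-b'-v$, and $\|-v\|=\|v\|<\eps\rho<\rho$, so $-v\in\rho\B$; hence $-v\in(B-b'-v)\cap(\rho\B)$, contradicting emptiness. This contradiction shows the pair cannot be approximately stationary relative to $a$ and $b$, so $a\in\bd A$; by symmetry $b\in\bd B$.

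The main obstacle is purely a matter of bookkeeping the three small quantities $\|a'-a\|$, $\|u\|$ and the radius $\rho$ against the fixed radius $r$: one must pick $\eps$ small enough that the ``blow-up'' of $\rho\B$ by these shifts still stays inside $\B_r(a)$, which forces a choice like $\eps<r/3$ together with $\eps<1$ (so that $\eps\rho<\rho$). No deep tool is needed — the argument uses only the elementary observation that a point in the interior of a set absorbs small translates, plus the fact that $b'-b'-v=-v$ is automatically a small element of $B-b'-v$. I would also remark that combined with Proposition~\ref{P4.1}, this shows the same conclusion for the stronger relative extremality, local extremality and stationarity properties, so all four notions in Definition~\ref{D4.1} are vacuous unless $a\in\bd A$ and $b\in\bd B$.
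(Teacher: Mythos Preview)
Your proof is correct and follows essentially the same route as the paper's: assume $a\in\Int A$, choose $\eps<\min\{r/3,1\}$, and exhibit the point $-v$ in the triple intersection $(A-a'-u)\cap(B-b'-v)\cap(\rho\B)$ to contradict approximate stationarity. The only cosmetic difference is that you first establish the inclusion $\rho\B\subset A-a'-u$ and then check $-v\in(B-b'-v)\cap(\rho\B)$, whereas the paper verifies directly that the specific point $\hat x:=-v$ lies in all three sets; the arithmetic and the choice of $\eps$ are the same.
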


\begin{proof}
Suppose, on the contrary, that $a\in\Int A$.
(The case $b\in\Int B$ is not much different.)
Then $\B_r(a)\subset A$ for some $r>0$.
Choose an $\eps\in(0,\min\{r/3,1\})$.
If $\rho\in(0,\eps)$,
$a'\in A \cap{\B}_\eps(a)$, $b'\in B \cap{\B}_\eps(b)$ and $u,v\in(\eps\rho)\B$, then $\hat x:=-v\in(B-b'-v)\cap(\rho\B)$ and $\|\hat x+a'+u-a\|\le \|a'-a\|+\|u\|+\|v\|<\eps(1+2\rho)<3\eps<r$.
Hence, $\hat x+a'+u\in A$ and $\hat x\in A-a'-u$.
It follows that $(A-a'-u)\cap(B-b'-v)\cap(\rho\B)\ne\es$, and consequently, the pair $\{A,B\}$ is not approximately stationary relative to $a$ and $b$.
\qed\end{proof}

The conventional Definitions~\ref{D1} and \ref{D2} of the extremality and stationarity properties of $\{A,B\}$ involve the translations $A-u$ and $B-v$ of the sets and, thus, refer implicitly to the `relative' versions of the corresponding properties.
The next proposition is in a sense a reformulation of Definition~\ref{D4.1}.

\begin{proposition}[Conventional vs relative extremality and stationarity]\label{P4.6}
Suppose $X$ is a normed linear space, $A,B\subset X$ and $\bx\in A\cap B$.
The pair $\{A,B\}$ is extremal/locally extremal/stationary/approximately stationary at $\bx$ if and only if, for any $u,v\in X$, the pair $\{A-u,B-v\}$ is extremal/locally extremal/stationa\-ry/approximately stationary relative to $\bx-u$ and $\bx-v$.
\end{proposition}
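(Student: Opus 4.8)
The plan is to prove Proposition~\ref{P4.6} directly from the definitions by a simple change of variables, handling all four properties simultaneously since the argument is identical in each case. The key observation is that translating the sets $A$ and $B$ by fixed vectors $u$ and $v$ interacts transparently with the further `shifts' used in Definitions~\ref{D1}, \ref{D2} and \ref{D4.1}: the set $(A-u)-u'$ is just $A-(u+u')$, and similarly balls centred at $\bx$ correspond to balls centred at $\bx-u$ after translation.

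First I would fix $u,v\in X$ and set $A':=A-u$, $B':=B-v$, $a:=\bx-u$, $b:=\bx-v$; note $a\in A'$, $b\in B'$. By Definition~\ref{D4.1}, the pair $\{A',B'\}$ being extremal (resp. locally extremal, stationary, approximately stationary) relative to $a$ and $b$ means, by definition, that the pair $\{A'-a,B'-b\}=\{A-\bx,B-\bx\}$ has the corresponding conventional property at $0$. So the right-hand side of the claimed equivalence, quantified over all $u,v$, reduces to a single statement: $\{A-\bx,B-\bx\}$ has the conventional property at $0$. (In fact it does not even depend on $u,v$ once expressed this way — a point worth noting in the write-up.) The left-hand side is: $\{A,B\}$ has the conventional property at $\bx$.

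Thus the proposition collapses to the assertion that $\{A,B\}$ has a given extremality/stationarity property at $\bx$ if and only if $\{A-\bx,B-\bx\}$ has that property at $0$. This in turn follows by inspecting Definitions~\ref{D1} and \ref{D2} and performing the substitution $y=x-\bx$: the condition $(A-u)\cap(B-v)\cap\B_\rho(\bx)=\es$ is equivalent to $((A-\bx)-u)\cap((B-\bx)-v)\cap\B_\rho(0)=\es$, and the constraints $\max\{\|u\|,\|v\|\}<\eps$ (or $<\eps\rho$) and $\rho\in(0,\eps)$ are unaffected; for approximate stationarity, $a\in A\cap\B_\eps(\bx)$ corresponds to $a-\bx\in(A-\bx)\cap\B_\eps(0)$, and the displayed condition \eqref{D2-2} transforms identically. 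For the extremality case one uses the same substitution with $\rho=\infty$, i.e.\ no ball at all. I would write this out once, observing that the four cases differ only in which quantifiers and which of the conditions \eqref{D1-1}, \eqref{D1-2}, \eqref{D2-1}, \eqref{D2-2} appear.

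There is essentially no obstacle here — the statement is `in a sense a reformulation of Definition~\ref{D4.1}', as the text says. The only thing requiring a modicum of care is bookkeeping: making sure that the role of $\bx$ relative to the translated sets is correctly tracked (it becomes $\bx-u$ in $A-u$ and $\bx-v$ in $B-v$, and these are the points named in Definition~\ref{D4.1}), and that translating $A-u$ by $-u$ again genuinely returns $A-\bx$ after subtracting $\bx-u$. So the write-up is: reduce the right-hand side via Definition~\ref{D4.1} to the conventional property of $\{A-\bx,B-\bx\}$ at $0$, then verify the translation-invariance of each of Definitions~\ref{D1} and \ref{D2} by the substitution $y=x-\bx$, and conclude. \qed
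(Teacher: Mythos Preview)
Your proposal is correct and matches the paper's intent: the paper gives no explicit proof of Proposition~\ref{P4.6}, treating it as ``in a sense a reformulation of Definition~\ref{D4.1}''. Your reduction via Definition~\ref{D4.1} to the translation invariance of Definitions~\ref{D1} and \ref{D2}, together with the observation that the right-hand side does not actually depend on $u,v$, is exactly the routine verification the paper has in mind.
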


In the case of (local) extremality, the distance between the translated sets $A-u$ and $B-v$ does not have to be attained at the translated points $\bx-u$ and $\bx-v$; see Fig.~\ref{F3}.

\begin{figure}[!ht]
\centering
\includegraphics[height=2.5cm]{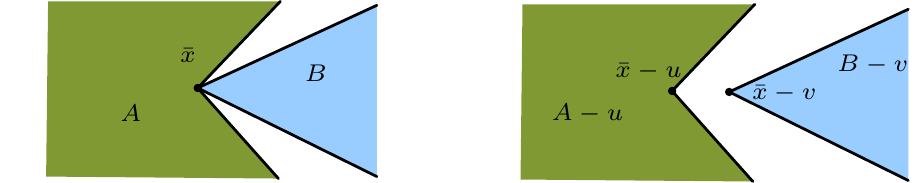}
\caption{Conventional vs relative extremality}\label{F3}
\end{figure}
%\todo[inline]{AK 4.07.17.
%The sets $A,B,A-u,B-v$ to be marked on the picture.
%$a$ and $b$ should be $\bx-u$ and $\bx-v$.}

The conventional Definitions~\ref{D1} and \ref{D2} of the extremality and stationarity properties as well as their extensions in Definition~\ref{D4.1} involve vectors $u,v$ determining the ``shifts'' of each of the sets.
It was observed in \cite{Kru05} that in the case of conventional extremality and stationarity of two sets it is sufficient to shift one of the sets only.
(In the general case of $n$ ($n>1$) sets, one can consider shifts of $n-1$ sets.)

\begin{proposition}[Relative extremality and stationarity with a single set shifted]\label{P4.5}
Suppose $X$ is a normed linear space, $A,B\subset X$, $a\in A$ and $b\in B$.
The pair $\{A,B\}$ is extremal/locally extremal/stationary/ap\-proximately stationary relative to $a$ and $b$ if and only if the respective conditions in Definition~\ref{D4.1} are satisfied with $v=0$.
\end{proposition}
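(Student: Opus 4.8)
The plan is to prove Proposition~\ref{P4.5} by reducing the ``both sets shifted'' formulation in Definition~\ref{D4.1} to the ``single set shifted'' version via the elementary observation that the intersection conditions in \eqref{D4.1-2} and \eqref{D2-2} depend on $u$ and $v$ only through the translate $B-b-v$ of $B$ relative to $A-a-u$, so that a simultaneous translation by $-u$ of the whole picture converts $(u,v)$ into $(0,v-u)$. The ``if'' direction is trivial since $v=0$ is a special case, so the real content is the ``only if'' direction, and it suffices to handle the most general of the four properties, observing that the argument is uniform across all of them (one can phrase it once and note that $\rho$, $\rho'$, $a'$, $b'$ are untouched).

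First I would fix one of the four properties, say approximate stationarity relative to $a$ and $b$, and take an arbitrary $\eps>0$. By Definition~\ref{D4.1}(iv) there exist $\rho\in(0,\eps)$, points $a'\in A\cap\B_\eps(a)$, $b'\in B\cap\B_\eps(b)$ and $u,v\in X$ with $\max\{\|u\|,\|v\|\}<\eps\rho$ such that $(A-a'-u)\cap(B-b'-v)\cap(\rho\B)=\emptyset$. The key step is to set $\hat v:=v-u$ and observe that $\|\hat v\|\le\|u\|+\|v\|<2\eps\rho$, which is the only place where a harmless loss of a factor $2$ occurs; this is absorbed by re-running the argument with $\eps/2$ in place of $\eps$ from the start (choose $\rho\in(0,\eps/2)$, $a'\in\B_{\eps/2}(a)$, etc.), so that the resulting shift has norm $<\eps\rho$ as required. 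Then translating by $u$, i.e. using that $x\in(A-a'-u)\cap(B-b'-v)\cap(\rho\B)$ is equivalent to $x+u\in(A-a'-0)\cap(B-b'-(v-u))$ together with $\|x\|<\rho$, I need to convert the ball condition: if $x\in\rho\B$ then $x+u\in(\rho+\eps\rho)\B\subset(2\rho)\B$, so emptiness of the smaller intersection follows from emptiness of $(A-a')\cap(B-b'-\hat v)\cap(2\rho\B)$ — which again is fine after rescaling $\rho$. The cleanest route is in fact the reverse: start from the given data with parameters halved, translate everything by $u$, and read off directly that $(A-a'-0)\cap(B-b'-\hat v)\cap(\rho'\B)=\emptyset$ for a suitable $\rho'$.

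The main obstacle — really the only subtlety — is bookkeeping the constants so that after the translation the shift and the ball radius still satisfy the exact inequalities demanded by Definition~\ref{D4.1} (norm of shift $<\eps\rho$, radius $<\eps$), rather than merely up to a multiplicative constant; this is routine and is resolved by the ``halve $\eps$ at the outset'' device, exactly as in the proofs of Corollary~\ref{NLEP} and Proposition~\ref{P3.4}. For the three non-approximate properties the same translation-by-$u$ argument applies verbatim with $a'=a$, $b'=b$ fixed (for local extremality and stationarity) and with $\rho=\infty$ (for extremality), and the constant juggling is even simpler since in Definition~\ref{D4.1}(i)--(iii) the bound on the shift is just $<\eps$ (resp. $<\eps\rho$) with no interaction with the ball. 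I would therefore write the proof for approximate stationarity in full and then remark that the other cases are obtained by the identical substitution $v\mapsto 0$, $u\mapsto u-v$ together with the corresponding trivial translation of the sets.
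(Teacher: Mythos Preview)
Your approach is essentially the paper's: translate by one of the two shift vectors to absorb both into a single shift, shrink the ball radius to accommodate the displaced centre, and start with a smaller $\eps$ to recover the exact inequalities. Two points, however, deserve tightening.

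First, in your main argument you translate by $u$ and land on the representation $(A-a'-0)\cap(B-b'-\hat v)$ with $\hat v=v-u$; this proves the single-shift condition with $u=0$, not with $v=0$ as the proposition states. The paper translates by $v$ instead, setting $u':=u-v$ so that $B$ is unshifted, which is what is required. Your concluding line ``$v\mapsto 0$, $u\mapsto u-v$'' is the correct substitution, but it contradicts the body of your plan. By symmetry this is harmless mathematically, but the write-up should be consistent with the statement.

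Second, ``halve $\eps$ at the outset'' is not quite enough for the stationarity cases. After translation you must also shrink $\rho$ (the paper takes $\rho':=\rho/2$, needing $\|v\|\le\rho/2$, hence $\eps\le\tfrac12$), and then the bound $\|u'\|<2\eps\rho=4\eps\rho'$ forces $\eps\le\eps'/4$, not $\eps'/2$. Your first pass at the ball inclusion is also stated in the wrong direction (you argue that emptiness of the $2\rho\B$ single-shift set would imply emptiness of the original, whereas you need the converse); you seem to notice this with ``the cleanest route is in fact the reverse'', and indeed the correct step is $\rho'\B\subset\rho\B+v$ when $\|v\|<\rho/2$, giving emptiness of the $\rho'\B$ single-shift intersection directly. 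These are bookkeeping fixes, not a change of strategy; once made, your proof coincides with the paper's.
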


\begin{proof}
If any of the conditions in Definition~\ref{D4.1} is satisfied with $v=0$, then the respective property obviously holds.

Conversely, if $(A-a-u)\cap(B-b-v)\cap(\rho\B)=\emptyset$ for some $\rho\in(0,\infty]$ and some $u,v\in X$, then $(A-a-u')\cap(B-b)\cap(\rho\B+v)=\emptyset$ where $u':=u-v$.
Set $\rho':=\rho/2$.
If $\max\{\|u\|,\|v\|\}<\al\le\rho/2$,
then $\|u'\|<2\al$ and $\rho'\B\subset\rho\B+v$.
Hence,
\begin{gather}\label{P4.5-1}
(A-a-u')\cap(B-b)\cap(\rho'\B)=\emptyset.
\end{gather}
These simple observations allow one to ensure each of the conditions in Definition~\ref{D4.1} with this $u'$ and appropriate choice of $\eps$ and $\rho$.

Let condition (ii) in Definition~\ref{D4.1} be satisfied with some $\rho\in(0,\infty]$.
(As observed above, the case $\rho=\infty$ covers condition (i) in Definition~\ref{D4.1}.)
Then, with $\rho':=\rho/2$ and any $\eps'>0$, one can take $\al:=\eps:=\min\{\eps',\rho\}/2$ and find $u,v\in X$ such that conditions \eqref{D4.1-2} hold.
With $u'$ defined as above, one has $\|u'\|<2\al\le\eps'$ and condition \eqref{P4.5-1} is satisfied; hence, condition (ii) (condition (i) if $\rho=\infty$) in Definition~\ref{D4.1} is satisfied with $v=0$.

Let condition (iii) (condition (iv)) in Definition~\ref{D4.1} be satisfied.
Then, with any $\eps'>0$, one can take $\eps:=\min\{\eps'/4,1/2\}$ and find a $\rho\in(0,\eps)$ and points $u,v\in X$ (and $a'\in A \cap{\B}_\eps(a)$ and $b'\in B \cap{\B}_\eps(b)$) such that conditions \eqref{D2-2} hold.
With $\rho':=\rho/2$, $\al:=\eps\rho\le\rho/2$ and $u'$ defined as above, one has $\rho'\in(0,\eps')$, $\|u'\|<2\al\le\eps'\rho'$ (and $a'\in A \cap{\B}_{\eps'}(a)$ and $b'\in B \cap{\B}_{\eps'}(b)$) and condition \eqref{P4.5-1} is satisfied (with $a'$ and $b'$ in place of $a$ and $b$); hence, condition (iii) (condition (iv)) in Definition~\ref{D4.1} is satisfied with $v=0$.
\qed\end{proof}

\begin{remark}
1. Condition $v=0$ in Proposition~\ref{P4.5} can be replaced with $u=0$.

2. Similarly to Proposition~\ref{P4.5}, one can also impose condition $v=0$ (or $u=0$) in Theorem~\ref{T4.2}.
However, unlike Proposition~\ref{P4.5}, this would require appropriate amendments in the estimates.

3. Thanks to Proposition~\ref{P4.5}, the relative extremality in Definition~\ref{D4.1}(i) is equivalent to the condition $0\in\bd[(A-a)-(B-b)]$.
\end{remark}

It was observed in \cite[Theorem~1]{Kru05} (see also \cite[Theorem~1]{Kru06}) that approximate stationarity in Definition~\ref{D2}(ii) can be characterized in metric terms.
The next proposition provides a version of this result for the relative approximate stationarity  in Definition~\ref{D4.1}(iv).
It is a consequence of \cite[Theorem~1(ii)]{Kru06} and Definition~\ref{D4.1}(iv).

\begin{proposition}[Metric characterization of approximate stationarity]
%\label{P4.2}
Suppose $X$ is a normed linear space, $A,B\subset X$, $a\in A$ and $b\in B$.
The pair $\{A,B\}$ is ap\-proximately stationary relative to $a$ and $b$ if and only if, for any $\eps>0$, there exist $y\in\B_\eps(a)$, $z\in\B_\eps(b)$ and $x\in\eps\B$ such that
\begin{gather*}%\label{ZN-2'}
\max\{d(x,A-y),d(x,B-z)\}<\eps d(x,(A-y)\cap(B-z)).
\end{gather*}
\end{proposition}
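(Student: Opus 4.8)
The plan is to derive this metric characterization by direct application of Definition~\ref{D4.1}(iv) together with the known metric characterization of conventional approximate stationarity from \cite[Theorem~1(ii)]{Kru06}. By Definition~\ref{D4.1}(iv), the pair $\{A,B\}$ is approximately stationary relative to $a$ and $b$ precisely when the pair $\{A-a,B-b\}$ is approximately stationary at $0$ in the conventional sense of Definition~\ref{D2}(ii). So the whole proof amounts to spelling out what \cite[Theorem~1(ii)]{Kru06} says for the translated sets $A':=A-a$, $B':=B-b$ at the point $\bx:=0$, and then translating the resulting points back.

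First I would recall the statement being invoked: for a pair $\{A',B'\}$ with $\bx\in A'\cap B'$, approximate stationarity at $\bx$ is equivalent to the property that for every $\eps>0$ there exist points $x_1\in A'\cap\B_\eps(\bx)$, $x_2\in B'\cap\B_\eps(\bx)$ and $x\in\B_\eps(\bx)$ satisfying
\begin{gather*}
\max\{d(x,A'-x_1+\bx),\,d(x,B'-x_2+\bx)\}<\eps\, d(x,(A'-x_1+\bx)\cap(B'-x_2+\bx)),
\end{gather*}
or whatever the precise normalization in that reference is — in the setting $\bx=0$ this simplifies, the balls $\B_\eps(0)$ become $\eps\B$, and the shifted sets become $A'-x_1=A-a-x_1$ and $B'-x_2=B-b-x_2$. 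Then I would set $y:=a+x_1\in\B_\eps(a)$ with $y\in A$ (since $x_1\in A-a$), and $z:=b+x_2\in\B_\eps(b)$ with $z\in B$, so that $A-a-x_1=A-y$ and $B-b-x_2=B-z$, and $x\in\eps\B$ is unchanged. Substituting gives exactly
\begin{gather*}
\max\{d(x,A-y),\,d(x,B-z)\}<\eps\, d(x,(A-y)\cap(B-z)),
\end{gather*}
which is the asserted condition. The converse direction runs the same substitution backwards: given $y,z,x$ as in the statement, set $x_1:=y-a\in(A-a)\cap\B_\eps(0)$ and $x_2:=z-b\in(B-b)\cap\B_\eps(0)$, recovering the metric characterization of approximate stationarity of $\{A-a,B-b\}$ at $0$, hence relative approximate stationarity of $\{A,B\}$ at $a,b$.

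The only genuinely delicate point — and the part I would check most carefully — is making sure the exact form of the metric characterization in \cite[Theorem~1(ii)]{Kru06} matches up after translation, in particular that the distances are taken to the correctly shifted sets and that the normalization by $d(x,(A-y)\cap(B-z))$ (rather than, say, by $\rho$ or by $\|x\|$) is indeed the form stated there, or follows from it by the elementary equivalences already discussed in the paper (cf. the observations preceding Definition~\ref{D2} and in Remark~\ref{R2.3}). Since translation is an isometry of $X$, it commutes with all the set operations $A\mapsto A-y$, with distances $d(\cdot,\cdot)$, with intersections, and maps balls $\B_\eps(a)$ to $\B_\eps(0)$; so no estimates are lost and the correspondence is exact. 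I would therefore present the proof as essentially a one-line reduction: apply Definition~\ref{D4.1}(iv) to pass to $\{A-a,B-b\}$ at $0$, invoke \cite[Theorem~1(ii)]{Kru06}, and perform the isometric change of variables $y=a+x_1$, $z=b+x_2$.
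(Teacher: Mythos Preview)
Your approach matches the paper's exactly: the paper gives no detailed proof but simply states that the proposition ``is a consequence of \cite[Theorem~1(ii)]{Kru06} and Definition~\ref{D4.1}(iv)'', which is precisely the reduction-plus-translation you spell out. The one caveat you yourself flag---matching the exact form of the cited characterization---is indeed the only thing to verify; note in particular that the proposition as stated requires only $y\in\B_\eps(a)$ and $z\in\B_\eps(b)$ (not $y\in A$, $z\in B$), so in your converse direction the claim $x_1:=y-a\in A-a$ is not justified by the hypotheses and should be checked against the precise form of \cite[Theorem~1(ii)]{Kru06}.
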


The next proposition shows that, if the distance between $A$ and $B$
%with $A\cap B=\es$
is attained (at least locally) at some points $a$ and $b$, then
the pair $\{A,B\}$ is (locally) extremal relative to $a$ and $b$.

\begin{proposition}[Relative extremality when the distance is attained]
\label{P4.2}
Suppose $X$ is a normed linear space, $A,B\subset X$ are closed, $a\in A$ and $b\in B$.
\begin{enumerate}
\item
If $\|a-b\|=d(A,B)>0$,
then
the pair $\{A,B\}$ is extremal relative to $a$ and $b$.
\item
If  $\|a-b\|=d(A\cap\B_\rho(a),B\cap\B_\rho(b))>0$
for some $\rho>0$, then
the pair $\{A,B\}$ is locally extremal relative to $a$ and $b$.
\end{enumerate}
\end{proposition}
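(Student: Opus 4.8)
The plan is to derive both parts from Proposition~\ref{P1}, which already packages the arbitrarily small ``shifts'' of the sets that relative extremality requires.

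\emph{Part (i).} Since $\|a-b\|=d(A,B)>0$ we have $A\cap B=\emptyset$, and the pair $a\in A$, $b\in B$ trivially satisfies \eqref{R2.4-1} for \emph{every} $\eps>0$, because $\|a-b\|=d(A,B)<d(A,B)+\eps$. Proposition~\ref{P1} then yields, for each $\eps>0$, vectors $u,v\in X$ with $\|u\|=\|v\|<\eps/2<\eps$ and $(A-a-u)\cap(B-b-v)=\emptyset$, which by Definition~\ref{D4.1}(i) is exactly the relative extremality of $\{A,B\}$ at $a$ and $b$. (Concretely, one may take $u:=\frac{\eps'}{2}\cdot\frac{b-a}{\|b-a\|}$ and $v:=-u$ with $0<\eps'<\min\{\eps,\|b-a\|\}$ as in \eqref{P1-2}: nudging $A$ towards $B$ would pull the gap below $d(A,B)$, which is impossible.)

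\emph{Part (ii).} Put $A':=A\cap\B_\rho(a)$ and $B':=B\cap\B_\rho(b)$. By hypothesis $\|a-b\|=d(A',B')>0$, so $A'\cap B'=\emptyset$ and, exactly as in part (i), for every $\eps>0$ the pair $a\in A'$, $b\in B'$ satisfies \eqref{R2.4-1} written for $A'$ and $B'$. I claim $\rho':=\rho/2$ witnesses local extremality relative to $a$ and $b$. Fix $\eps>0$ and set $\eps_1:=\min\{\eps,\rho/2\}$; applying Proposition~\ref{P1} to $A',B'$ with parameter $\eps_1$ gives $u,v\in X$ with $\|u\|=\|v\|<\eps_1/2$ (hence $\|u\|,\|v\|<\rho/4$ and $<\eps$) and $(A'-a-u)\cap(B'-b-v)=\emptyset$. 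If some $x$ lay in $(A-a-u)\cap(B-b-v)\cap(\rho'\B)$, then $a+u+x\in A$ and $\|(a+u+x)-a\|\le\|u\|+\|x\|<\rho/4+\rho/2<\rho$, so $a+u+x\in A'$, i.e. $x\in A'-a-u$; likewise $x\in B'-b-v$, contradicting the emptiness just obtained. Hence $(A-a-u)\cap(B-b-v)\cap(\rho'\B)=\emptyset$, and \eqref{D4.1-2} holds. (Alternatively, one may invoke the already proved part (i) for the closed sets $A\cap\cl\B_{\rho'}(a)$ and $B\cap\cl\B_{\rho'}(b)$, whose mutual distance is still $\|a-b\|$ and is attained at $a$ and $b$, and pull the conclusion back in the same way.)

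The argument is essentially bookkeeping, with no serious obstacle; the only point needing a little care is the calibration in part (ii) of the localization radius $\rho'$ against the admissible range of $\eps$, so that the perturbed points $a+u+x$ and $b+v+x$ are guaranteed to stay inside $\B_\rho(a)$ and $\B_\rho(b)$ — the balls on which the distance is attained — thereby letting the nonlocal conclusion of Proposition~\ref{P1} for the localized pair be transported back to $A$ and $B$.
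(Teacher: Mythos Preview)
Your proof is correct and follows essentially the same approach as the paper's: both parts hinge on shifting the sets toward each other via $u,v$ proportional to $b-a$ and $a-b$, then observing that any common point would yield a pair $(a',b')$ with $\|a'-b'\|<\|a-b\|$, contradicting the (local) distance-attainment hypothesis. The only cosmetic difference is that you route this through Proposition~\ref{P1} (which packages exactly this shift), whereas the paper carries out the direct computation inline; in part~(ii) you fix $\rho'=\rho/2$ while the paper allows any $\rho'\in(0,\rho)$, but this makes no difference since local extremality requires only the existence of \emph{some} localization radius.
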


\begin{proof}
(i) Take an arbitrary $\eps>0$ and a $t\in(0,\min\{\eps/\|a-b\|,1/2\})$.
Set $u:=t(b-a)$ and $v:=t(a-b)$.
Then $\max\{\|u\|,\|v\|\}=t\|a-b\|<\eps$, and we only need to show that
\begin{equation}\label{P4.2-2}
(A-a-u)\cap (B-b-v)=\emptyset.
\end{equation}
Suppose this is not true, i.e, there exists an $x\in X$ such that $a':=a+u+x\in A$ and $b':=b+v+x\in B$.
Then $\|a'-b'\|=(1-2t)\|a-b\|<\|a-b\|$, which contradicts the assumption.
Hence, condition \eqref{P4.2-2} is true and, consequently, the pair $\{A,B\}$ is extremal relative to $a$ and $b$.

(ii) Take a $\rho'\in(0,\rho)$, an arbitrary $\eps>0$ and a $t\in(0,\min\{\eps/\|a-b\|,(\rho-\rho')/\|a-b\|,1/2\})$.
Set $u:=t(b-a)$ and $v:=t(a-b)$.
Then $\max\{\|u\|,\|v\|\}=t\|a-b\|<\eps$, and we only need to show that
\begin{equation}\label{P4.2-1}
(A-a-u)\cap (B-b-v) \cap(\rho'{\B})=\emptyset.
\end{equation}
Suppose this is not true, i.e, there exists an $x\in\rho'{\B}$ such that $a':=a+u+x\in A$ and $b':=b+v+x\in B$.
Observe that $\max\{\|a'-a\|,\|b'-b\|\}\le t\|a-b\|+\rho'<\rho$.
Thus, $a'\in A\cap\B_\rho(a)$ and $b'\in B\cap\B_\rho(b)$, and $\|a'-b'\|=(1-2t)\|a-b\|<\|a-b\|$, which contradicts the assumption.
Hence, condition \eqref{P4.2-2} is true and, consequently, the pair $\{A,B\}$ is locally extremal relative to $a$ and $b$.
\qed\end{proof}

\begin{remark}
As the next example shows, the condition $d(A,B)>0$ in part (i) and the similar condition in part (ii) of Proposition~\ref{P4.2} cannot be dropped.
\end{remark}

\begin{example}
For the sets $A:=\{(x_1,x_2)\mid x_2\le0\}$ and $B:=\{(x_1,x_2)\mid x_1+x_2\le0\}$ in $\R^2$ we have $d(A,B)=0$, while $\{A,B\}$ is obviously not extremal (in fact it is not even approximately stationary) at any $a=b\in A\cap B$ (in particular, at $a=b=0$).
\end{example}

\begin{remark}
When $A\cap B=\es$ and the distance is not attained, there may or may not be a pair of points $a\in A$ and $b\in B$ such that the pair $\{A,B\}$ is (locally) extremal relative to $a$ and $b$.
\end{remark}

\begin{example}\label{E4.2}
1. For the sets $A:=\{(x_1,x_2)\mid x_2\le0\}$ and $B:=\{(x_1,x_2)\mid x_2\ge e^{-x_1}\}$ in $\R^2$ (see Fig.~\ref{F4}) we have $A\cap B=\es$, while $\{A,B\}$ is obviously not extremal (and even not approximately stationary) at any $a\in A$ and $b\in B$.
Note that $\{A,B\}$ could still be considered extremal if the conventional Definition~\ref{D1}(i) was amended as discussed in Remark~\ref{R2.4}.

2. For the sets in item 1 above, it holds $d(A,B)=0$.
This is not a precondition.
If the set $B$ above is translated upwards by one unit: $B:=\{(x_1,x_2)\mid x_2\ge e^{-x_1}+1\}$ (see Fig.~\ref{F5}), then $d(A,B)=1$ and $\{A,B\}$ is still not extremal at any $a\in A$ and $b\in B$.
%In fact, translations do not change the situation with extremality/stationarity at all.

3. If the two sets in item 1 above are modified slightly: $A:=\{(x_1,x_2)\mid -1\le x_2\le0\}$ and $B:=\{(x_1,x_2)\mid e^{-x_1}\le x_2\le1\}$ (see Fig.~\ref{F6}), then despite their intersection still being empty, the modified sets are extremal, e.g., at $a:=(1,-1)\in A$ and $b:=(1,1)\in B$.
\end{example}

\begin{figure}[!ht]
\centering
\begin{minipage}[b]{0.3\textwidth}
\centering
\includegraphics[height=2.5cm]{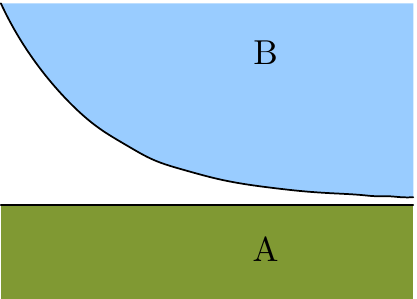}
\caption{Example~\ref{E4.2}.1}\label{F4}
\end{minipage}
\begin{minipage}[b]{0.3\textwidth}
\centering
\includegraphics[height=2.5cm]{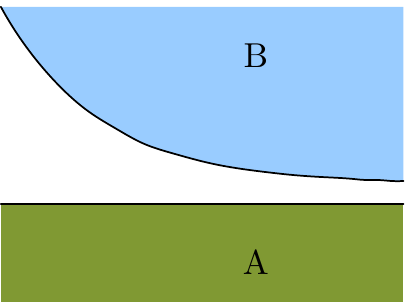}
\caption{Example~\ref{E4.2}.2}\label{F5}
\end{minipage}
\begin{minipage}[b]{0.3\textwidth}
\centering
\includegraphics[height=2.5cm]{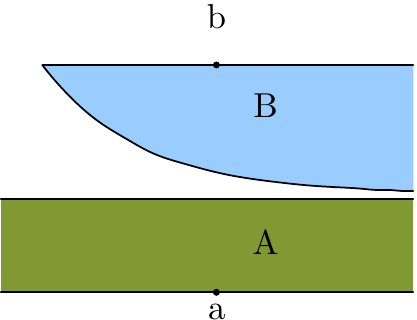}
\caption{Example~\ref{E4.2}.3}\label{F6}
\end{minipage}
\end{figure}

The approximate stationarity property in Definition~\ref{D4.1}(iii) possesses certain stability: if it holds at a certain pair $(a,b)$, it holds `approximately' at all nearby pairs.

\begin{proposition}[Stability of relative approximate stationarity]
\label{P4.3}
Suppose $X$ is a normed linear space, $A,B\subset X$ are closed, $a\in A$ and $b\in B$.
The pair $\{A,B\}$ is approximately stationary relative to $a$ and $b$ if and only if for any $\eps>0$, $\de>0$ and any points
$a'\in A\cap\B_\eps(a)$, $b'\in B\cap\B_\eps(b)$ there exist $\rho\in(0,\de)$, $a''\in A\cap\B_\eps(a')$, $b''\in B\cap\B_\eps(b')$ and $u,v\in{X}$ such that conditions \eqref{D2-2} hold true with $a''$ and $b''$ in place of $a$ and $b$.
\end{proposition}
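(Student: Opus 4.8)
The plan is to prove the biconditional in two directions, writing $(\star)$ for the displayed right-hand condition in Proposition~\ref{P4.3}.

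The implication ``$(\star)\Rightarrow$ relative approximate stationarity'' is immediate. Given $\eps>0$, apply $(\star)$ with this $\eps$, with $\de:=\eps$, and with $a':=a$, $b':=b$ (an admissible choice, since $a\in A\cap\B_\eps(a)$ and $b\in B\cap\B_\eps(b)$). The $\rho\in(0,\eps)$, the points $a''\in A\cap\B_\eps(a)$, $b''\in B\cap\B_\eps(b)$, and the $u,v$ with $\max\{\|u\|,\|v\|\}<\eps\rho$ and $(A-a''-u)\cap(B-b''-v)\cap(\rho\B)=\es$ furnished by $(\star)$ are exactly the data asked for in Definition~\ref{D4.1}(iv).

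For the converse I would argue by contraposition. Negating $(\star)$ yields $\eps_0>0$, $\de_0>0$ and \emph{fixed} points $a_0'\in A\cap\B_{\eps_0}(a)$, $b_0'\in B\cap\B_{\eps_0}(b)$ such that $(A-a''-u)\cap(B-b''-v)\cap(\rho\B)\ne\es$ for every $\rho\in(0,\de_0)$, every $a''\in A\cap\B_{\eps_0}(a_0')$, $b''\in B\cap\B_{\eps_0}(b_0')$ and every $u,v$ with $\max\{\|u\|,\|v\|\}<\eps_0\rho$. The key observation is that $a_0'$ lies \emph{strictly} inside $\B_{\eps_0}(a)$, so $\B_{\eps_1}(a)\subset\B_{\eps_0}(a_0')$ as soon as $\eps_1\le\eps_0-\|a_0'-a\|$, and similarly for $b$. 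Hence, putting $\eps_1:=\min\{\eps_0-\|a_0'-a\|,\,\eps_0-\|b_0'-b\|,\,\de_0\}>0$, one has, for any $\rho\in(0,\eps_1)$, any $\tilde a\in A\cap\B_{\eps_1}(a)$, $\tilde b\in B\cap\B_{\eps_1}(b)$ and any $u,v$ with $\max\{\|u\|,\|v\|\}<\eps_1\rho$: that $\rho\in(0,\de_0)$; that $\tilde a\in\B_{\eps_0}(a_0')$ and $\tilde b\in\B_{\eps_0}(b_0')$ (triangle inequality together with the choice of $\eps_1$); and that $\max\{\|u\|,\|v\|\}<\eps_0\rho$ (because $\eps_1\le\eps_0$). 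The negated statement then gives $(A-\tilde a-u)\cap(B-\tilde b-v)\cap(\rho\B)\ne\es$, which is precisely the failure of Definition~\ref{D4.1}(iv) for the radius $\eps_1$; thus $\{A,B\}$ is not approximately stationary relative to $a$ and $b$, and the contrapositive is complete.

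The only delicate point is the radius bookkeeping just performed, and it is worth recording why one should \emph{not} attempt a direct proof of the converse: starting from relative approximate stationarity at $(a,b)$, Definition~\ref{D4.1}(iv) produces witnesses near $a$ and $b$, whereas $(\star)$ demands witnesses near the \emph{prescribed} nearby points $a',b'$, and these cannot in general be pushed to within $\eps$ of $a',b'$ (a simple half-plane example already shows this). The contrapositive sidesteps the issue because there the perturbed centres $a_0',b_0'$ are fixed, so a single shrinkage of the radius places $\B_{\eps_1}(a)$ inside $\B_{\eps_0}(a_0')$. Incidentally, the closedness of $A$ and $B$ is not used anywhere; the argument is a pure rearrangement of the quantifiers in Definition~\ref{D4.1}(iv).
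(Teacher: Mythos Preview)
Your proof is correct; both the sufficiency and the contrapositive argument for the necessity are sound. However, your commentary that ``one should \emph{not} attempt a direct proof of the converse'' is mistaken, and in fact the paper proves necessity directly. Given $\eps>0$, $\de>0$, $a'\in A\cap\B_\eps(a)$ and $b'\in B\cap\B_\eps(b)$, the paper picks $\xi\in(0,\de)$ with $\max\{\|a'-a\|,\|b'-b\|\}+\xi<\eps$ (possible because the balls are open), applies Definition~\ref{D4.1}(iv) with $\xi$ in place of $\eps$, and obtains $\rho\in(0,\xi)$, $a''\in A\cap\B_\xi(a)$, $b''\in B\cap\B_\xi(b)$ and $u,v$ with $\max\{\|u\|,\|v\|\}<\xi\rho$. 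Then $\rho<\de$, $\|a''-a'\|\le\|a''-a\|+\|a-a'\|<\xi+\|a'-a\|<\eps$ (similarly for $b''$), and $\xi\rho<\eps\rho$. This is exactly the same triangle-inequality bookkeeping you perform in your contrapositive, just read forwards: your $\eps_1$ plays the role of the paper's $\xi$. So the direct proof does work; what fails is only the naive attempt that applies Definition~\ref{D4.1}(iv) with the \emph{same} $\eps$ rather than a shrunk one. Your observation that closedness is not used is correct and applies equally to the paper's argument.
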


\begin{proof}
The sufficiency is obvious: taking $a':=a$ and $b':=b$ in the conditions of Proposition~\ref{P4.3}, one satisfies the conditions in Definition~\ref{D4.1}(iii).
To prove the necessity,
let the pair $\{A,B\}$ be approximately stationary relative to $a$ and $b$, $\eps>0$, $\de>0$, $a'\in A\cap\B_\eps(a)$ and $b'\in B\cap\B_\eps(b)$.
Choose a $\xi\in(0,\de)$ such that $\max\{\|a'-a\|,\|b'-b\|\}+\xi<\eps$.
By Definition~\ref{D4.1}, there exist $\rho\in(0,\xi)$,
$a''\in A\cap{\B}_\xi(a)$, $b''\in B\cap{\B}_\xi(b)$ and $u,v\in{X}$ such that conditions \eqref{D2-2} hold true with $a''$ and $b''$ in place of $a$ and $b$.
Then $\rho<\de$, $\|a''-a'\|\le\|a''-a\|+\|a-a'\|<\eps$ and, similarly, $\|b''-b'\|<\eps$.
\qed\end{proof}

\subsection{Pairs of sets and \SVM s}\label{SS4.2}

It is well known (see e.g. \cite{Iof00_,KruTha15}) that regularity/transversality properties of collections of sets are in a sense equivalent to the corresponding regularity and Lipschitz-like properties of certain \SVM s.
Given two subsets $A$ and $B$ of a normed linear space $X$, the mappings $F:X\rightrightarrows X\times X$ and $S:X\times X\rightrightarrows X$ defined by
\begin{equation}\label{4.22}
F(x):=(A-x)\times(B-x),\;\;x\in X,
\qdtx{and}
S(y,z):=(A-y)\cap(B-z),\;\;y,z\in X
\end{equation}
play a major role in this type of analysis.
Below we establish links between the relative extremality and stationarity properties of the pair of sets $\{A,B\}$ and certain properties of the \SVM s $F$ and $S$ given by \eqref{4.22}.

First notice that $S=F\iv$, $\dom S=F(X)$, $F(0)=A\times B$ and $S(0,0)=A\cap B$.
Given an $a\in A$ and a $b\in B$, we obviously have $(a,b)\in F(0)$ and $0\in S(a,b)$.
%In what follows we assume that the product space $X\times X$ is equipped with the maximum norm: $\norm{(y,z)}=\max\{\norm{y},\norm{z}\}$, $(y,z)\in X\times X$.
Recall that in this paper we are assuming that the product space $X\times X$ is equipped with the maximum norm.
This corresponds to $\max\{\|u\|,\|v\|\}$ involved in all parts of Definition~\ref{D4.1}, and
is not a big restriction: any other norm compatible with the norm on $X$ can be used instead, as long as it is consistently used everywhere (cf. Remark~\ref{R2.3}).

\begin{proposition}[Pairs of sets and \SVM s]
\label{P4.7}
Suppose $X$ is a normed linear space, $A,B\subset X$, $a\in A$, $b\in B$ and \SVM s $F$ and $S$ are given by \eqref{4.22}.
\begin{enumerate}
\item
$\{A,B\}$ is extremal relative to $a$ and $b$ if and only if $(a,b)\in\bd F(X)$, or equivalently, $(a,b)\in\bd\dom S$.
\item
$\{A,B\}$ is locally extremal relative to $a$ and $b$ if and only if there exists a $\rho>0$ such that \begin{enumerate}
\item
$(a,b)\in\bd F(\rho\B)$, or equivalently,
\item
for any $\eps>0$, there is a pair $(y,z)\in\B_\eps(a,b)$ such that $S(y,z)\cap(\rho\B)=\es$.
\end{enumerate}
\item
$\{A,B\}$ is stationary relative to $a$ and $b$ if and only if, for any $\eps>0$, there exists a $\rho\in(0,\eps)$ such that
\begin{enumerate}
\item
$\B_{\eps\rho}(a,b)\not\subset F(\rho\B)$, or equivalently,
\item
there is a pair $(y,z)\in\B_{\eps\rho}(a,b)$ such that $S(y,z)\cap(\rho\B)=\es$.
\end{enumerate}
\item
$\{A,B\}$ is approximately stationary relative to $a$ and $b$ if and only if, for any $\eps>0$, there exist a $\rho\in(0,\eps)$ and points $a'\in A \cap{\B}_\eps(a)$, $b'\in B \cap{\B}_\eps(b)$ such that
\begin{enumerate}
\item
$\B_{\eps\rho}(a',b')\not\subset F(\rho\B)$, or equivalently,
\item
there is a pair $(y,z)\in\B_{\eps\rho}(a',b')$ such that $S(y,z)\cap(\rho\B)=\es$.
\end{enumerate}
\end{enumerate}
\end{proposition}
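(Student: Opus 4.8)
My plan is to prove all four items by a systematic translation between the primal conditions of Definition~\ref{D4.1} and set-membership statements about the mappings $F$ and $S$ from \eqref{4.22}, resting on two elementary observations that I would record first. The first is that, for every $x\in X$ and every $(p,q)\in X\times X$, one has $(p,q)\in F(x)$ precisely when $x\in(A-p)\cap(B-q)=S(p,q)$; this is just $S=F\iv$ written out coordinatewise. Consequently, for any $C\subset X$,
\begin{gather*}
F(C)=\set{(p,q)\in X\times X\mid S(p,q)\cap C\ne\es},
\end{gather*}
so that $F(\rho\B)=\set{(p,q)\mid S(p,q)\cap(\rho\B)\ne\es}$ and $F(X)=\dom S$. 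The second observation is that, since $X\times X$ carries the maximum norm, $(u,v)\mapsto(a+u,b+v)$ is a bijection of $\set{(u,v)\mid\max\set{\norm u,\norm v}<\eps}$ onto $\B_\eps(a,b)$, and under this correspondence the translated set $(A-a-u)\cap(B-b-v)$ that recurs throughout Definition~\ref{D4.1} equals $S(y,z)$ with $(y,z)=(a+u,b+v)$.

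Next I would note that, since $a\in A$, $b\in B$ and $0\in\rho\B$, the point $(a,b)$ lies in $F(0)\subset F(\rho\B)\subset F(X)=\dom S$ for every $\rho>0$. Hence ``$(a,b)\in\bd F(\rho\B)$'' means simply ``$(a,b)\notin\Int F(\rho\B)$'', equivalently ``for every $\eps>0$, $\B_\eps(a,b)\not\subset F(\rho\B)$'', and similarly for $\bd F(X)=\bd\dom S$. Using the dictionary above, ``$\B_\eps(a,b)\not\subset F(\rho\B)$'', and likewise ``$\B_{\eps\rho}(a',b')\not\subset F(\rho\B)$'' in items (iii)--(iv), asserts exactly that some $(y,z)$ in the ball satisfies $S(y,z)\cap(\rho\B)=\es$. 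This already yields the equivalence of the ``(a)'' and ``(b)'' forms in items (ii)--(iv), and reduces item (i) to the statement ``for every $\eps>0$ there is $(y,z)\in\B_\eps(a,b)$ with $S(y,z)=\es$''.

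Finally I would match each item against Definition~\ref{D4.1} by substituting $(y,z)=(a+u,b+v)$, or $(y,z)=(a'+u,b'+v)$, and invoking the first paragraph. In (i) this substitution turns the reduced statement into the definition of extremality of $\set{A-a,B-b}$, i.e.\ relative extremality. In (ii), after fixing the same $\rho$ on both sides, ``for every $\eps>0$ there is $(y,z)\in\B_\eps(a,b)$ with $S(y,z)\cap(\rho\B)=\es$'' reads as ``for every $\eps>0$ there are $u,v$ with $\max\set{\norm u,\norm v}<\eps$ and $(A-a-u)\cap(B-b-v)\cap(\rho\B)=\es$'', which is \eqref{D4.1-2}. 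In (iii) and (iv) the only difference is that the relevant ball has radius $\eps\rho$ rather than $\eps$, so the substitution produces $\max\set{\norm u,\norm v}<\eps\rho$ as in \eqref{D2-2}; in (iv) the auxiliary points $a'\in A\cap\B_\eps(a)$ and $b'\in B\cap\B_\eps(b)$ pass through unchanged.

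I expect no analytic obstacle: the whole argument is bookkeeping. The two points I would be most careful about are getting the sign convention right in the coordinatewise form of $S=F\iv$ (so that membership in $F(x)$ corresponds to membership of $x$ in $S(p,q)$, not in some shifted set), and reproducing on the $F,S$ side the exact nesting of the $\rho$- and $\eps$-quantifiers, which is ``$\exists\rho\,\forall\eps$'' in item (ii) but ``$\forall\eps\,\exists\rho$'' in items (iii)--(iv).
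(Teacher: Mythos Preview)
Your proposal is correct and follows essentially the same approach as the paper: both arguments rest on the substitution $(y,z)=(a+u,b+v)$ (or $(a'+u,b'+v)$) together with the identity $S=F\iv$, so that the emptiness conditions in Definition~\ref{D4.1} translate directly into the boundary/non-containment statements about $F(\rho\B)$ and $\dom S$. The paper's proof is terser, performing the translation inline in one line per item, whereas you spell out the dictionary $F(C)=\{(p,q)\mid S(p,q)\cap C\ne\es\}$ and the quantifier structure explicitly; no substantive difference.
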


\begin{proof}
(i) By Definition~\ref{D4.1}(i), $\{A,B\}$ is extremal relative to $a$ and $b$ if and only if, for any $\eps>0$, there is a pair $(y,z)\in\B_{\eps}(a,b)$ such that $S(y,z)=(A-y)\cap(B-z)=\es$, i.e. $(a,b)\in\bd\dom S=\bd F(X)$.

(ii) By Definition~\ref{D4.1}(ii), $\{A,B\}$ is locally extremal relative to $a$ and $b$ if and only if there exists a $\rho>0$ such that for any $\varepsilon>0$ there is a pair $(y,z)\in\B_{\eps}(a,b)$ such that $S(y,z)\cap(\rho\B)=(A-y)\cap(B-z)\cap(\rho\B)=\es$.
This is equivalent to $(a,b)\in\bd F(\rho\B)$.

(iii) By Definition~\ref{D4.1}(iii), $\{A,B\}$ is stationary relative to $a$ and $b$ if and only if, for any $\eps>0$, there exist a $\rho\in(0,\eps)$ and a pair $(y,z)\in\B_{\eps\rho}(a,b)$ such that $S(y,z)\cap(\rho\B)=(A-y)\cap(B-z)\cap(\rho\B)=\es$.
This is equivalent to $\B_{\eps\rho}(a,b)\not\subset F(\rho\B)$.

(iv) By Definition~\ref{D4.1}(iv), $\{A,B\}$ is approximately stationary relative to $a$ and $b$ if and only if, for any $\eps>0$, there exist a $\rho\in(0,\eps)$, points $a'\in A \cap{\B}_\eps(a)$, $b'\in B \cap{\B}_\eps(b)$ and a pair $(y,z)\in\B_{\eps\rho}(a',b')$ such that $S(y,z)\cap(\rho\B)=(A-y)\cap(B-z)\cap(\rho\B)=\es$.
This is equivalent to $\B_{\eps\rho}(a',b')\not\subset F(\rho\B)$.
\qed\end{proof}

\begin{remark}%[Extended extremal principle: historical comments]
\label{R4.6}
The extremality and stationarity properties of pairs of sets studied in the current paper are in a sense examples of their `irregular behaviour'; cf. \cite{Kru05,Kru06,KruLop12.1}.
No surprise, the properties of the \SVM s $F$ and $S$ that appear in Proposition~\ref{P4.7} are in fact negations of certain regularity, semicontinuity and Lipschitz-like properties, some of which are well known.
Below we briefly comment on these properties.
\begin{enumerate}
\item
In accordance with Proposition~\ref{P4.7}(i),
\emph{$\{A,B\}$ is NOT extremal relative to $a$ and $b$ if and only if there exists an $\al>0$ such that $\B_\al(a,b)\subset F(X)$, or equivalently, $\B_\al(a,b)\subset\dom S$.}
This means that $F$ \emph{covers} $(a,b)$ (on $X$).
\item
In accordance with Proposition~\ref{P4.7}(ii),
\emph{$\{A,B\}$ is NOT locally extremal relative to $a$ and $b$ if and only if, for any $\rho>0$, there exists an $\al>0$ such that
\begin{enumerate}
\item
$\B_\al(a,b)\subset F(\rho\B)$, or equivalently, \item
$d(0,S(y,z))\le\rho$ for any $(y,z)\in\B_{\al}(a,b)$.
\end{enumerate}}
This means that $F$ \emph{covers} $(a,b)$ on $\rho\B$ (is \emph{open} at $(0,(a,b))$ \cite[p.~180]{DonRoc14}) and $S$ is \lsc\ at $((a,b),0)$ \cite[p.~10]{KlaKum02}.
\item
In accordance with Proposition~\ref{P4.7}(iii),
\emph{$\{A,B\}$ is NOT stationary relative to $a$ and $b$ if and only if there exists an $\al>0$ such that
\begin{enumerate}
\item
$\B_{\al\rho}(a,b)\subset F(\rho\B)$ for some $\de>0$ and all $\rho\in(0,\de)$, or equivalently,
\item
$d(0,S(y,z))\le\rho$ for some $\de>0$ and all $\rho\in(0,\de)$, $(y,z)\in\B_{\al\rho}(a,b)$.
\end{enumerate}}
Condition (a) means that $F$ \emph{$\al$-covers} \cite[p.~1765]{Kru09} (is \emph{$\al$-open} \cite[Definition~2.4(i)]{ApeDurStr13}) at $(0,(a,b))$.
It is equivalent \cite[Theorem~6(i)]{Kru09} to the following condition:
\begin{enumerate}
\item[(c)]
\emph{$\al d(0,F\iv(y,z))\le d((y,z),(a,b))$ for all $(y,z)$ near $(a,b)$,}
\end{enumerate}
which means that $F$ is \emph{semiregular} \cite[p.~1765]{Kru09}, \cite[Definition~1.2]{CibFabKru} (\emph{hemiregular} \cite[Definition~2.4(iii)]{ApeDurStr13}) at $(0,(a,b))$ with rank $\al$.

In its turn, condition (b) can be rewritten as
\begin{enumerate}
\item[(d)]
\emph{$\al d(0,S(y,z))\le d((y,z),(a,b))$ for all $(y,z)$ near $(a,b)$,}
\end{enumerate}
which means that $S$ is \emph{Lipschitz \lsc} \cite[p.~34]{KlaKum02} (\emph{pseudocalm} \cite[Definition~2.4(ii)]{ApeDurStr13}) at $(0,(a,b))$ with rank $\al$.
\item
In accordance with Proposition~\ref{P4.7}(iv),
\emph{$\{A,B\}$ is NOT approximately stationary relative to $a$ and $b$ if and only if there exists an $\al>0$ such that
\begin{enumerate}
\item
$\B_{\al\rho}(a',b')\subset F(\rho\B)$ for some $\de>0$ and all $\rho\in(0,\de)$, $a'\in A \cap{\B}_\de(a)$, $b'\in B \cap{\B}_\de(b)$, or equivalently,
\item
$d(0,S(y,z))\le\rho$ for some $\de>0$ and all $\rho\in(0,\de)$, $a'\in A \cap{\B}_\de(a)$, $b'\in B \cap{\B}_\de(b)$, $(y,z)\in\B_{\al\rho}(a',b')$.
\end{enumerate}}
It is not difficult to check that the above conditions are equivalent, respectively, to the following two (with possibly a smaller $\de$):
\emph{
\begin{enumerate}
\item[\rm(a$^\prime$)]
$\B_{\al\rho}(y,z)\subset F(\B_\rho(w))$ for some $\de>0$ and all $\rho\in(0,\de)$, $(w,y,z)\in\gph F\cap{\B}_\de(0,a,b)$, or equivalently,
\item[\rm(b$^\prime$)]
$d(w,S(y',z'))\le\rho$ for some $\de>0$ and all $\rho\in(0,\de)$, $(w,y,z)\in\gph F\cap{\B}_\de(0,a,b)$, $(y',z')\in\B_{\al\rho}(y,z)$.
\end{enumerate}}
Condition (a$^\prime$) means that $F$ \emph{uniformly covers} \cite[p.~1766]{Kru09} (is \emph{open with/at linear rate} \cite[p.~13]{KlaKum02}, \cite[Definition~2.1(i)]{ApeDurStr13}) around $(0,(a,b))$.
It is known to be equivalent (see e.g. \cite[Theorem~6(iii)]{Kru09}) to the following condition:
\begin{enumerate}
\item[(c)]
\emph{$\al d(w,F\iv(y,z))\le d((y,z),F(w))$ for all $(y,z)$ near $(a,b)$ and $w$ near 0,}
\end{enumerate}
which means that $F$ is \emph{metrically regular} \cite[p.~12]{KlaKum02}, \cite[p.~178]{DonRoc14} at $(0,(a,b))$ with rank $\al$.

In its turn, condition (b) can be rewritten as
\begin{enumerate}
\item[(d)]
\emph{$\al d(w,S(y',z'))\le \|(y',z')-(y,z)\|$ for all $(y,z)$ and $(y',z')$ near $(a,b)$ and $w\in S(y,z)$ near 0,}
\end{enumerate}
which means that $S$ has the \emph{Aubin property} \cite[p.~172]{DonRoc14} (is \emph{pseudo Lipschitz} \cite[(D1)]{KlaKum02}) at $(0,(a,b))$ with rank $\al$.
\end{enumerate}
There is little consistency in the literature about whether to put $\al$ in the left or the \RHS\ of the corresponding inequality/inclusion in the definitions of the properties discussed above.
Thus, in some sources it is $\al\iv$ that is taken as the quantitative estimate (rank, modulus) of the respective property instead of $\al$.
\end{remark}

Observe from \eqref{4.22} that $F(x)=A\times B - (x,x)$.
This simple observation provides a link between the extremality and stationarity properties of pairs of sets studied in the current paper and the \emph{nonconvex separation property} introduced by Borwein and Jofre \cite{BorJof98}.
Given subsets $A,B\subset X$ and points $a\in A$ and $b\in B$, define another pair of sets and a pair of points
\begin{equation}\label{4.23}
\tilde A:=A\times B,\quad \tilde B:=\{(x,x)\mid x\in X\} \qdtx{and}
\tilde a:=(a,b)\in\tilde A,\quad \tilde b:=(0,0)\in\tilde B
\end{equation}
in the product space $X\times X$.
Note that $\tilde B=-\tilde B$.
As previously, this space is assumed to be equipped with the maximum norm.
We are going to keep the standard notation $\B$ for the unit ball in $X\times X$.
The next proposition is a consequence of
Proposition~\ref{P4.7}.

\begin{proposition}[Pairs of sets in $X$ and $X\times X$]\label{P4.8}
Suppose $X$ is a normed linear space, $A,B\subset X$, $a\in A$, $b\in B$ and the sets $\tilde A$ and $\tilde B$ and points $\tilde a$ and $\tilde b$ are given by \eqref{4.23}.
\begin{enumerate}
\item
$\{A,B\}$ is extremal relative to $a$ and $b$ if and only if $\tilde a+\tilde b\in\bd(\tilde A+\tilde B)$.
\item
$\{A,B\}$ is locally extremal relative to $a$ and $b$ if and only if there exists a $\rho>0$ such that $\tilde a+\tilde b\in\bd(\tilde A+\tilde B \cap(\rho\B))$.
\item
$\{A,B\}$ is stationary relative to $a$ and $b$ if and only if, for any $\eps>0$, there exists a $\rho\in(0,\eps)$ such that
$\B_{\eps\rho}(\tilde a+\tilde b)\not\subset\tilde A+\tilde B \cap(\rho\B)$.
\item
$\{A,B\}$ is approximately stationary relative to $a$ and $b$ if and only if, for any $\eps>0$, there exist a $\rho\in(0,\eps)$ and a point $\tilde a'\in\tilde A \cap{\B}_\eps(\tilde a)$ such that
$\B_{\eps\rho}(\tilde a'+\tilde b)\not\subset\tilde A+\tilde B \cap(\rho\B)$.
\end{enumerate}
\end{proposition}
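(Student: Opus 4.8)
The plan is to derive each of the four equivalences from the corresponding item of Proposition~\ref{P4.7} by rewriting the images of the mapping $F$ from \eqref{4.22} in terms of the sum $\tilde A+\tilde B$. The starting point is the identity noted just before the statement: $F(x)=A\times B-(x,x)=\tilde A-(x,x)$ for every $x\in X$. Since $X\times X$ carries the maximum norm, $\|(x,x)\|=\|x\|$, so $\{(x,x)\mid x\in\rho\B\}=\tilde B\cap(\rho\B)$, and hence
\begin{gather*}
F(\rho\B)=\bigcup_{x\in\rho\B}\big(\tilde A-(x,x)\big) =\tilde A-\big(\tilde B\cap(\rho\B)\big) =\tilde A+\big(\tilde B\cap(\rho\B)\big),
\end{gather*}
the last equality because $\tilde B=-\tilde B$ and $\rho\B$ is symmetric. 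Since $\{(x,x)\mid x\in X\}=\tilde B$, the same computation with $X$ in place of $\rho\B$ gives $F(X)=\tilde A-\tilde B=\tilde A+\tilde B$. Two further bookkeeping observations are needed: $\tilde a+\tilde b=(a,b)+(0,0)=(a,b)$, so balls centred at $(a,b)$ in $X\times X$ are precisely the balls centred at $\tilde a+\tilde b$; and, for $\eps>0$, a point $(a',b')$ lies in $\tilde A\cap\B_\eps(\tilde a)$ if and only if $a'\in A\cap\B_\eps(a)$ and $b'\in B\cap\B_\eps(b)$ (again by the maximum norm), in which case $\tilde a'+\tilde b=(a',b')$ for $\tilde a':=(a',b')$.

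Granting these identities, the four parts follow line by line. For (i), Proposition~\ref{P4.7}(i) states that $\{A,B\}$ is extremal relative to $a$ and $b$ iff $(a,b)\in\bd F(X)$; substituting $F(X)=\tilde A+\tilde B$ and $(a,b)=\tilde a+\tilde b$ yields $\tilde a+\tilde b\in\bd(\tilde A+\tilde B)$. For (ii), Proposition~\ref{P4.7}(ii)(a) gives the existence of $\rho>0$ with $(a,b)\in\bd F(\rho\B)$; substituting $F(\rho\B)=\tilde A+(\tilde B\cap(\rho\B))$ produces $\tilde a+\tilde b\in\bd(\tilde A+\tilde B\cap(\rho\B))$. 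For (iii) and (iv) one likewise rewrites the non-inclusions $\B_{\eps\rho}(a,b)\not\subset F(\rho\B)$ and $\B_{\eps\rho}(a',b')\not\subset F(\rho\B)$ coming from Proposition~\ref{P4.7}(iii)(a) and (iv)(a), using in (iv) the identification of the admissible pairs $(a',b')$ with the points $\tilde a'\in\tilde A\cap\B_\eps(\tilde a)$ together with $\tilde a'+\tilde b=(a',b')$.

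Since the argument is a pure translation of Proposition~\ref{P4.7}, there is no genuine obstacle. The only point requiring care is the handling of $\tilde B\cap(\rho\B)$: one must keep track of the maximum-norm conventions on $X\times X$ so that $\{(x,x)\mid\|x\|\le\rho\}$ really equals $\tilde B\cap(\rho\B)$, and then use the symmetry $\tilde B=-\tilde B$ to turn the Minkowski difference $\tilde A-(\tilde B\cap(\rho\B))$ into the sum appearing in the statement. One should also make explicit that the precedence in the expression $\tilde A+\tilde B\cap(\rho\B)$ is $\tilde A+(\tilde B\cap(\rho\B))$, matching the parenthesised image $F(\rho\B)$.
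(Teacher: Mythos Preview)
Your proposal is correct and follows exactly the approach the paper takes: the paper merely states that Proposition~\ref{P4.8} is a consequence of Proposition~\ref{P4.7}, and your argument supplies precisely the translation identities $F(X)=\tilde A+\tilde B$ and $F(\rho\B)=\tilde A+(\tilde B\cap(\rho\B))$ (via $\tilde B=-\tilde B$ and the maximum-norm identification $\{(x,x)\mid x\in\rho\B\}=\tilde B\cap(\rho\B)$) that make this consequence explicit.
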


The condition $\tilde a+\tilde b\in\bd(\tilde A+\tilde B)$ in Proposition~\ref{4.23}(i) is the \emph{boundary condition} for the sets $\tilde A$ and $\tilde B$ introduced and characterized in \cite[Theorem~1]{BorJof98}, while the assertions in Proposition~\ref{4.23}(i) and (ii) in the special case $a=b$ improve \cite[Proposition~2(ii)]{BorJof98} (see \cite[Remark~9]{Kru05}).
The conditions in parts (iii) and (iv) of Proposition~\ref{4.23} define certain stationarity properties for the sets $\tilde A$ and $\tilde B$ which may be of independent interest.

\begin{remark}%[Pairs of sets in $X$ and $X\times X$]\label{P4.8}
Proposition~\ref{4.23} relates the extremality and stationarity properties of pairs of sets in $X$ with the corresponding `boundary condition'-like properties of certain pairs of sets in $X\times X$.
There is also a way in the opposite direction: given subsets $A,B\subset X$ and points $a\in A$ and $b\in B$, one can consider the boundary condition $a+b\in\bd(A+B)$ and its analogues as in the corresponding parts of Proposition~\ref{4.23} and relate them with the corresponding extremality and stationarity properties of the sets $\tilde A:=A\times B$ and $\tilde B:=\{(y,z)\mid y+z=a+b\}$ in $X\times X$ at the point $(a,b)$ along the lines of \cite[Proposition~2(i)]{BorJof98} and \cite[Section~4]{Kru05}).
\end{remark}

\begin{acknowledgements}
The authors thank the referees for careful reading of the manuscript and their constructive comments and suggestions.
\end{acknowledgements}

\addcontentsline{toc}{section}{References}
\def\cprime{$'$} \def\cftil#1{\ifmmode\setbox7\hbox{$\accent"5E#1$}\else
  \setbox7\hbox{\accent"5E#1}\penalty 10000\relax\fi\raise 1\ht7
  \hbox{\lower1.15ex\hbox to 1\wd7{\hss\accent"7E\hss}}\penalty 10000
  \hskip-1\wd7\penalty 10000\box7} \def\cprime{$'$} \def\cprime{$'$}
  \def\cprime{$'$} \def\cprime{$'$} \def\cprime{$'$}
  \def\Dbar{\leavevmode\lower.6ex\hbox to 0pt{\hskip-.23ex \accent"16\hss}D}
  \def\cfac#1{\ifmmode\setbox7\hbox{$\accent"5E#1$}\else
  \setbox7\hbox{\accent"5E#1}\penalty 10000\relax\fi\raise 1\ht7
  \hbox{\lower1.15ex\hbox to 1\wd7{\hss\accent"13\hss}}\penalty 10000
  \hskip-1\wd7\penalty 10000\box7} \def\cprime{$'$}

%\bibliographystyle{spmpsci}
%\bibliography{buch-kr,kruger,kr-tmp}

\begin{thebibliography}{10}
\providecommand{\url}[1]{{#1}}
\providecommand{\urlprefix}{URL }
\expandafter\ifx\csname urlstyle\endcsname\relax
  \providecommand{\doi}[1]{DOI~\discretionary{}{}{}#1}\else
  \providecommand{\doi}{DOI~\discretionary{}{}{}\begingroup
  \urlstyle{rm}\Url}\fi

\bibitem{ApeDurStr13}
Apetrii, M., Durea, M., Strugariu, R.: On subregularity properties of
  set-valued mappings.
\newblock Set-Valued Var. Anal. \textbf{21}(1), 93--126 (2013).
\newblock \doi{10.1007/s11228-012-0213-4}

\bibitem{BorJof98}
Borwein, J.M., Jofr{\'e}, A.: A nonconvex separation property in {B}anach
  spaces.
\newblock Math. Methods Oper. Res. \textbf{48}(2), 169--179 (1998)

\bibitem{BorZhu05}
Borwein, J.M., Zhu, Q.J.: Techniques of Variational Analysis.
\newblock Springer, New York (2005)

\bibitem{CibFabKru}
Cibulka, R., Fabian M., Kruger A.Y.: On semiregularity of mappings.
\newblock Preprint arXiv \textbf{1711.04420}, 1--27 (2017)

\bibitem{DonRoc14}
Dontchev, A.L., Rockafellar, R.T.: Implicit Functions and Solution Mappings. A
  View from Variational Analysis, 2 edn.
\newblock Springer Series in Operations Research and Financial Engineering.
  Springer, New York (2014)

\bibitem{DubMil65_}
Dubovitskii, A.Y., Miljutin, A.A.: Extremal problems with constraints.
\newblock USSR Comp. Maths. Math. Phys. \textbf{5}, 1--80 (1965)

\bibitem{Fab86}
Fabian, M.: Subdifferentials, local $\varepsilon$-supports and {A}splund
  spaces.
\newblock J. London Math. Soc. \textbf{34}, 568--576 (1986)

\bibitem{Fab89}
Fabian, M.: Subdifferentiability and trustworthiness in the light of a new
  variational principle of {B}orwein and {P}reiss.
\newblock Acta Univ. Carolinae \textbf{30}, 51--56 (1989)

\bibitem{Iof98}
Ioffe, A.D.: Fuzzy principles and characterization of trustworthiness.
\newblock Set-Valued Anal. \textbf{6}, 265--276 (1998)

\bibitem{Iof00_}
Ioffe, A.D.: Metric regularity and subdifferential calculus.
\newblock Russian Math. Surveys \textbf{55}, 501--558 (2000)

\bibitem{Iof}
Ioffe, A.D.: Transversality in variational analysis.
\newblock J. Optim. Theory Appl.  (2017).
\newblock \doi{10.1007/s10957-017-1130-3}

\bibitem{KlaKum02}
Klatte, D., Kummer, B.: Nonsmooth Equations in Optimization. Regularity,
  Calculus, Methods and Applications, \emph{Nonconvex Optimization and its
  Applications}, vol.~60.
\newblock Kluwer Academic Publishers, Dordrecht (2002)

\bibitem{Kru81.2}
Kruger, A.Y.: Generalized differentials of nonsmooth functions.
\newblock VINITI no.~1332-81. Minsk (1981).
\newblock 67 pp. In Russian. Available at
  https://asterius.ballarat.edu.au/akruger/research/publications.html

\bibitem{Kru81.1}
Kruger, A.Y.: $\varepsilon$-semidifferentials and $\varepsilon$-normal
  elements.
\newblock VINITI no.~1331-81. Minsk (1981).
\newblock 76 pp. In Russian. Available at
  https://asterius.ballarat.edu.au/akruger/research/publications.html

\bibitem{Kru85.1}
Kruger, A.Y.: Generalized differentials of nonsmooth functions and necessary
  conditions for an extremum.
\newblock Sibirsk. Mat. Zh. \textbf{26}(3), 78--90 (1985).
\newblock (In Russian; English transl.: Siberian Math. J. 26 (1985), 370--379)

\bibitem{Kru98}
Kruger, A.Y.: About extremality of systems of sets.
\newblock Dokl. Nats. Akad. Nauk Belarusi \textbf{42}(1), 24--28 (1998).
\newblock In Russian. Available at
  https://asterius.ballarat.edu.au/akruger/research/publications.html

\bibitem{Kru00}
Kruger, A.Y.: Strict {$(\varepsilon,\delta)$}-semidifferentials and extremality
  of sets and functions.
\newblock Dokl. Nats. Akad. Nauk Belarusi \textbf{44}(2), 19--22 (2000).
\newblock In Russian. Available at
  https://asterius.ballarat.edu.au/akruger/research/publications.html

\bibitem{Kru02}
Kruger, A.Y.: Strict {$(\varepsilon,\delta)$}-subdifferentials and extremality
  conditions.
\newblock Optimization \textbf{51}(3), 539--554 (2002)

\bibitem{Kru03}
Kruger, A.Y.: On {F}r\'echet subdifferentials.
\newblock J. Math. Sci. \textbf{116}(3), 3325--3358 (2003)

\bibitem{Kru04}
Kruger, A.Y.: Weak stationarity: eliminating the gap between necessary and
  sufficient conditions.
\newblock Optimization \textbf{53}(2), 147--164 (2004)

\bibitem{Kru05}
Kruger, A.Y.: Stationarity and regularity of set systems.
\newblock Pac. J. Optim. \textbf{1}(1), 101--126 (2005)

\bibitem{Kru06}
Kruger, A.Y.: About regularity of collections of sets.
\newblock Set-Valued Anal. \textbf{14}(2), 187--206 (2006)

\bibitem{Kru09}
Kruger, A.Y.: About stationarity and regularity in variational analysis.
\newblock Taiwanese J. Math. \textbf{13}(6A), 1737--1785 (2009)

\bibitem{KruLop12.1}
Kruger, A.Y., L\'opez, M.A.: Stationarity and regularity of infinite
  collections of sets.
\newblock J. Optim. Theory Appl. \textbf{154}(2), 339--369 (2012)

\bibitem{KruLukTha2}
Kruger, A.Y., Luke, D.R., Thao, N.H.: About subtransversality of collections of
  sets.
\newblock Set-Valued Var. Anal. pp. 1--29 (2017).
\newblock \doi{0.1007/s11228-017-0436-5}

\bibitem{KruMor80}
Kruger, A.Y., Mordukhovich, B.S.: Extremal points and the {E}uler equation in
  nonsmooth optimization problems.
\newblock Dokl. Akad. Nauk BSSR \textbf{24}(8), 684--687 (1980).
\newblock In Russian. Available at
  https://asterius.ballarat.edu.au/akruger/research/publications.html

\bibitem{KruMor80.2}
Kruger, A.Y., Mordukhovich, B.S.: Generalized normals and derivatives and
  necessary conditions for an extremum in problems of nondifferentiable
  programming. II.
\newblock VINITI no.~494-80, 60 pp. Minsk (1980).
\newblock In Russian. Available at
  https://asterius.ballarat.edu.au/akruger/research/publications.html

\bibitem{KruTha13}
Kruger, A.Y., Thao, N.H.: About uniform regularity of collections of sets.
\newblock Serdica Math. J. \textbf{39}, 287--312 (2013)

\bibitem{KruTha15}
Kruger, A.Y., Thao, N.H.: Quantitative characterizations of regularity
  properties of collections of sets.
\newblock J. Optim. Theory Appl. \textbf{164}(1), 41--67 (2015).
\newblock \doi{10.1007/s10957-014-0556-0}

\bibitem{Mor06.1}
Mordukhovich, B.S.: Variational Analysis and Generalized Differentiation. {I}:
  {B}asic {T}heory, \emph{Grundlehren der Mathematischen Wissenschaften
  [Fundamental Principles of Mathematical Sciences]}, vol. 330.
\newblock Springer, Berlin (2006)

\bibitem{MorSha96}
Mordukhovich, B.S., Shao, Y.: Extremal characterizations of {A}splund spaces.
\newblock Proc. Amer. Math. Soc. \textbf{124}(1), 197--205 (1996)

\bibitem{Phe93}
Phelps, R.R.: Convex Functions, Monotone Operators and Differentiability,
  \emph{Lecture Notes in Mathematics}, vol. 1364, second edn.
\newblock Springer-Verlag, Berlin (1993)

\bibitem{RocWet98}
Rockafellar, R.T., Wets, R.J.B.: Variational Analysis.
\newblock Springer, Berlin (1998)

\bibitem{Pie84}
Pierra, G.: Decomposition through formalization in a product space.
\newblock Math. Programming \textbf{28}(1), 96--115 (1984).
\newblock \doi{10.1007/BF02612715}

\bibitem{ZheNg06}
Zheng, X.Y., Ng, K.F.: The {L}agrange multiplier rule for multifunctions in
  {B}anach spaces.
\newblock SIAM J. Optim. \textbf{17}(4), 1154--1175 (2006).
\newblock \doi{10.1137/060651860}

\bibitem{ZheNg11}
Zheng, X.Y., Ng, K.F.: A unified separation theorem for closed sets in a
  {B}anach space and optimality conditions for vector optimization.
\newblock SIAM J. Optim. \textbf{21}(3), 886--911 (2011).
\newblock \doi{10.1137/100811155}

\bibitem{ZheYanZou17}
Zheng, X.Y., Yang, Z., Zou, J.: Exact separation theorem for closed sets in
  Asplund spaces.
\newblock Optimization \textbf{66}(7), 1065--1077 (2017).
\newblock \doi{10.1080/02331934.2017.1316503}

\end{thebibliography}
\end{document}